\newtheorem{theorem}{Theorem}
\newtheorem{lemma}[theorem]{Lemma}
\newtheorem{remark}[theorem]{Remark}
\newtheorem{corollary}[theorem]{Corollary}
\newtheorem{proposition}[theorem]{Proposition}
\newtheorem{example}[theorem]{Example}
\newcommand{\tto}{\twoheadrightarrow}
\font\sc=rsfs10
\newcommand{\cC}{\sc\mbox{C}\hspace{1.0pt}}
\newcommand{\cG}{\sc\mbox{G}\hspace{1.0pt}}
\newcommand{\cI}{\sc\mbox{I}\hspace{1.0pt}}
\newcommand{\cS}{\sc\mbox{S}\hspace{1.0pt}}
\newcommand{\cT}{\sc\mbox{T}\hspace{1.0pt}}
\newcommand{\cU}{\sc\mbox{U}\hspace{1.0pt}}
\font\scc=rsfs7
\newcommand{\ccC}{\scc\mbox{C}\hspace{1.0pt}}
\begin{document}

\title[Transitive $2$-representations]{Transitive $2$-representations\\ of finitary $2$-categories}
\author{Volodymyr Mazorchuk and Vanessa Miemietz}

\begin{abstract}
In this article, we define and study the class of simple transitive $2$-representations of finitary $2$-categories. 
We prove a weak version of the classical Jordan-H{\"o}lder Theorem where the weak composition subquotients
are given by simple transitive $2$-representations. For a large class of finitary $2$-categories we prove that
simple transitive $2$-representations are exhausted by cell $2$-representations. Finally, we show that
this large class contains finitary quotients of $2$-Kac-Moody algebras.
\end{abstract}

\maketitle

\section{Introduction}\label{s0}

This article, for the first time, proves a general classification result for an axiomatically
defined class of $2$-representations of a large class of $2$-categories covering most examples
studied in the area of categorification.

More specifically, we study finitary $2$-categories over an algebraically closed field which 
include the $2$-category of Soergel bimodules associated to a finite Coxeter system (see \cite{BG,So,EW}),
an exhaustive family of quotients of $2$-Kac-Moody algebras (see \cite{BFK,KL,Ro,CL,We}),
quiver $2$-categories constructed in \cite{Xa} and the $2$-category of 
projective functors on the module category of a finite dimensional algebra (see \cite{MM1}).
We define a new class of $2$-representations for such $2$-categories which we call 
{\em simple transitive $2$-representations} and which we believe serves as the correct $2$-analogue
for the class of irreducible representations of an algebra. Our definition of 
simple transitive $2$-representations comes in two layers, the first being a discrete transitive
action of the multisemigroup of $1$-morphisms (this alone is called {\em transitivity}), the second being 
the absence of categorical ideals in the representation invariant under the $2$-action
(this is what we refer to as  {\em simplicity}).

For simple transitive $2$-representations we obtain, for arbitrary finitary $2$-categories, a
weak version of the classical Jordan-H{\"o}lder Theorem, see Theorem~\ref{thm4}, in which 
simple transitive $2$-representations appear as weak composition subquotients of general finitary
$2$-representations. It turns out that any finitary $2$-representation of a finitary $2$-category 
has a filtration with subquotients being transitive $2$-representations. In contrast to 
classical representation theory, transitive $2$-representations do not seem to admit any natural
filtration, however, they do have a well-defined simple top which is our weak composition subquotient.
A different approach to the Jordan-H{\"o}lder theory for $2$-Kac-Moody algebras is outlined in
\cite[Subsection~5.1]{Ro}.

Our main result is Theorem~\ref{thm15} which provides a classification of simple transitive $2$-representations
for a large class of finitary $2$-categories. The latter includes the $2$-category of Soergel bimodules in
type $A$, all of the above mentioned finitary quotients of $2$-Kac-Moody algebras and the $2$-category of 
projective functors on the module category of a finite dimensional self-injective algebra. Moreover, it
also includes all variations of the latter $2$-category which constitute a list of finitary $2$-categories
from \cite{MM3} satisfying a $2$-analogue of simplicity for a finite dimensional algebra. The classification
result states that for this class of $2$-categories simple transitive $2$-representations are precisely
the cell $2$-representations studied in \cite{MM1,MM2,MM3}. In particular, this implies uniqueness of
categorification of simple integrable modules for finite dimensional simple Lie algebras.
The only comparable statement in the literature,
for the $2$-categorical analogue of $U(\mathfrak{sl}_2)$ and for a special class of 
$2$-representations categorifying simple $\mathfrak{sl}_2$-modules, was proved in \cite[Proposition~5.26]{CR}.

The proof can be divided into two major parts. One of these (the proof of  Theorem~\ref{thm15}) reduces the 
problem to the case of the $2$-category of projective functors on the module category of a finite dimensional
self-injective algebra. The latter case is treated in Theorem~\ref{thmmain} and relies on a detailed study of
endomorphism algebras of certain bimodules and, crucially, on a classical result of Perron and Frobenius 
on the structure of real matrices with positive coefficients.

The article is organized as follows. In Section~\ref{s1} we recall notions developed in \cite{MM1,MM2,MM3}
and state the Perron-Frobenius Theorem. In Section~\ref{s2}  we introduce transitive and simple transitive
$2$-representations and gather examples and preliminary results. Section~\ref{s3} presents the statement and
proof of our weak Jordan-H{\"o}lder Theorem. Section~\ref{s4} is devoted to the proof of our main result
in the case of the $2$-category of projective functors on the module category of a finite dimensional
self-injective algebra. Section~\ref{s5} establishes the main result in the general case.
Finally, in Section~\ref{s6} we provide and study examples, including our family of quotients of 
$2$-Kac-Moody algebras.

\vspace{5mm}

\noindent
{\bf Acknowledgment.} A substantial part of the paper was written during mutual visits of the authors 
to the University of East Anglia respectively Uppsala University, whose hospitality is gratefully acknowledged. 
Both visits were supported by EPSRC grant EP/K011782/1. The first author is partially supported by the 
Swedish Research Council. The second author is partially supported by EPSRC grant EP/K011782/1. 
We thank Anne-Laure Thiel, Qimh Xantcha and Ben Webster for stimulating discussions. 
We thank the referee for very useful comments and explanations. 
We thank Ben Elias for pointing out a missing case in the
original version of the case-by-case analysis in Proposition~22.

\section{Preliminaries}\label{s1}

\subsection{Notation}\label{s1.1}

Throughout, we let $\Bbbk$ denote an algebraically closed field.  

A {\em $2$-category} is a category enriched over the category of small categories. A $2$-category $\cC$ consists of 
objects (denoted $\mathtt{i},\mathtt{j},\mathtt{k},\dots$), $1$-morphisms (denoted $\mathrm{F},\mathrm{G},\mathrm{H},
\dots$) and $2$-morphisms (denoted $\alpha,\beta,\gamma,\dots$). For $\mathtt{i}\in \cC$, the identity $1$-morphism
is denoted $\mathbbm{1}_{\mathtt{i}}$ and, for a $1$-morphism $\mathrm{F}$, the corresponding identity $2$-morphism 
is denoted  $\mathrm{id}_{\mathrm{F}}$. Composition of $1$-morphisms is denoted by $\circ$,
horizontal composition of $2$-morphisms is denoted by $\circ_0$ and vertical composition of $2$-morphisms is 
denoted by $\circ_1$. We let $\mathbf{Cat}$ denote the $2$-category of small categories.

\subsection{Finitary $2$-categories}\label{s1.2}

An additive $\Bbbk$-linear category is called {\em finitary} if it is idempotent split, has finitely many
isomorphism classes of indecomposable objects and finite dimensional $\Bbbk$-vec\-tor spaces of morphisms.
Denote by $\mathfrak{A}^f_{\Bbbk}$ the $2$-category whose objects are finitary additive $\Bbbk$-linear categories,
$1$-morphisms are additive $\Bbbk$-linear functors and $2$-morphisms are natural transformations of functors.

A {\em finitary} $2$-category (over $\Bbbk$) is a $2$-category $\cC$ with the following properties:
\begin{itemize}
\item it has a finite number of objects;
\item for any pair $\mathtt{i},\mathtt{j}$ of objects in $\cC$, the category 
$\cC(\mathtt{i},\mathtt{j})$ is in $\mathfrak{A}_{\Bbbk}^f$ 
and horizontal composition is both additive and $\Bbbk$-linear;
\item for any $\mathtt{i}\in\cC$, the $1$-morphism $\mathbbm{1}_{\mathtt{i}}$ is indecomposable.
\end{itemize}
We refer to \cite{Le,McL} for more general details on abstract $2$-categories and to
\cite{MM1,MM2,MM3,MM4} for more information on finitary $2$-categories.

\subsection{$2$-representations}\label{s1.3}

Let $\cC$ be a finitary $2$-category. By a {\em $2$-representation} of $\cC$ we mean a strict 
$2$-functor from $\cC$ to $\mathbf{Cat}$. By a {\em finitary $2$-representation} of $\cC$ we mean a strict 
$2$-functor from $\cC$ to $\mathfrak{A}_{\Bbbk}^f$. Our $2$-representations are generally denoted by 
$\mathbf{M},\mathbf{N},\dots$ with one exception: for $\mathtt{i}\in\cC$ we have the {\em principal} $2$-representation $\mathbb{P}_{\mathtt{i}}:=\cC(\mathtt{i},{}_-)$. Finitary $2$-representations of $\cC$ form a $2$-category,
denoted $\cC\text{-}\mathrm{afmod}$, whose $1$-morphisms are $2$-natural transformations and whose $2$-morphisms 
are modifications (see \cite{Le,MM3}).

Two $2$-representations $\mathbf{M}$ and $\mathbf{N}$ of $\cC$ are called {\em equivalent} if 
there exists a $2$-natural transformation $\Phi:\mathbf{M}\to\mathbf{N}$ such that $\Phi_{\mathtt{i}}$
is an equivalence for each $\mathtt{i}$.

Let $\mathbf{M}$ be a $2$-representation of $\cC$. Assume that $\mathbf{M}(\mathtt{i})$ is an idempotent split
additive category for each $\mathtt{i}\in\cC$. For any collection of objects 
$X_i\in \mathbf{M}(\mathtt{i}_i)$, where $i\in I$,
the additive closure of all objects of the form $\mathrm{F}X_i$, where $i\in I$ and $\mathrm{F}$ runs 
through all $1$-morphisms of $\cC$ is stable under the action of $\cC$ and hence inherits the structure of a
$2$-representation by restriction. This $2$-representation will be denoted $\mathbf{G}_{\mathbf{M}}(\{X_i:i\in I\})$.

To simplify notation, we will often write $\mathrm{F}\, X$ for $\mathbf{M}(\mathrm{F})\, X$ where
$\mathrm{F}$ is a $1$-morphism.

\subsection{Combinatorics of finitary $2$-categories}\label{s1.4}

Let $\cC$ be a finitary $2$-category. Denote by $\mathcal{S}(\cC)$ the multisemigroup of isomorphism 
classes of $1$-morphisms in $\cC$, see \cite[Section~3]{MM2}. As usual, we define the left preorder
$\geq_L$ on $\mathcal{S}(\cC)$ as follows: for two $1$-morphisms $\mathrm{F},\mathrm{G}$ we set
$\mathrm{G}\geq_L\mathrm{F}$ provided that there is a $1$-morphism $\mathrm{H}$ such that $\mathrm{G}$
is isomorphic to a direct summand of $\mathrm{H}\circ \mathrm{F}$. Equivalence classes for $\geq_L$
are called {\em left cells}. Right and two-sided preorders $\geq_R$ and $\geq_J$ and respective cells are
defined analogously.

\subsection{Weakly fiat and fiat $2$-categories}\label{s1.5}

For a $2$-category $\cC$ there are three ways of creating an opposite $2$-category.
\begin{itemize}
\item We can reverse both $1$- and $2$-morphisms. 
\item We can reverse only $1$-morphisms. 
\item We can reverse only $2$-morphisms. 
\end{itemize}
In the present paper we let $\cC^{\mathrm{op}}$ denote the first of the three choices above.

A finitary $2$-category $\cC$ is called {\em weakly fiat} provided that 
\begin{itemize}
\item there is a weak equivalence $*:\cC\to \cC^{\mathrm{op}}$;
\item for any pair $\mathtt{i}, \mathtt{j}\in\cC$ and every $1$-morphism
$\mathrm{F}\in\cC(\mathtt{i},\mathtt{j})$ we have
$2$-mor\-phisms $\alpha:\mathrm{F}\circ\mathrm{F}^*\to
\mathbbm{1}_{\mathtt{j}}$ and $\beta:\mathbbm{1}_{\mathtt{i}}\to
\mathrm{F}^*\circ\mathrm{F}$ such that 
$\alpha_{\mathrm{F}}\circ_1\mathrm{F}(\beta)=\mathrm{id}_{\mathrm{F}}$ and
$\mathrm{F}^*(\alpha)\circ_1\beta_{\mathrm{F}^*}=\mathrm{id}_{\mathrm{F}^*}$.
\end{itemize}
If $*$ is involutive, then $\cC$ is called {\em fiat}, see \cite{MM1,MM2}.

\subsection{$2$-ideals}\label{s1.6}

Let $\cC$ be a $2$-category. A {\em left $2$-ideal} $\cI$ of $\cC$ consists of the same objects as $\cC$
and for each pair $\mathtt{i},\mathtt{j}$ of objects an ideal $\cI(\mathtt{i},\mathtt{j})$ in
$\cC(\mathtt{i},\mathtt{j})$ such that $\cI$ is stable under the left horizontal multiplication with 
$1$- and $2$-morphisms in $\cC$. Similarly one defines {\em right $2$-ideals} and
{\em two-sided $2$-ideals}. The latter will simply be called {\em $2$-ideals}.
For example, each principal $2$-representation can be interpreted as a left $2$-ideal in $\cC$.

Let $\cC$ be a $2$-category and $\mathbf{M}$ be a $2$-representation of $\cC$. An {\em ideal}
$\mathbf{I}$ of $\mathbf{M}$ is a collection of ideals $\mathbf{I}(\mathtt{i})$ in $\mathbf{M}(\mathtt{i})$
for each $\mathtt{i}\in\cC$ stable under the action of $\cC$ in the following sense: for any morphism 
$\eta\in \mathbf{I}$ and any $1$-morphism $\mathrm{F}$ the composition
$\mathbf{M}(\mathrm{F})(\eta)$ (if it is defined) is in $\mathbf{I}$. For example, 
left $2$-ideals of $\cC$ give rise to ideals in principal $2$-representations.

\subsection{Abelianization}\label{s1.7}

Let $\mathcal{A}$ be a finitary additive $\Bbbk$-linear category. Then the
{\em abelianization} $\overline{\mathcal{A}}$ of $\mathcal{A}$ is the category whose objects are
diagrams $X\overset{\eta}{\longrightarrow}Y$ where $X,Y\in \mathcal{A}$ and $\eta\in\mathcal{A}(X,Y)$
and morphisms are equivalence classes of solid commutative diagrams of the form
\begin{displaymath}
\xymatrix{ 
X\ar[rr]^{\eta}\ar[d]_{\tau_1}&&Y\ar[d]^{\tau_2}\ar@{-->}[dll]_{\tau_3}\\
X'\ar[rr]_{\eta'}&&Y'
} 
\end{displaymath}
modulo the subspace spanned by those diagrams for which there exists $\tau_3$ as indicated by the dashed arrow
such that $\eta'\tau_3=\tau_2$. The category $\overline{\mathcal{A}}$ is abelian (cf. \cite{Fr}) 
and is equivalent to the category of modules over the finite dimensional $\Bbbk$-algebra
\begin{displaymath}
\mathrm{End}_{\mathcal{A}}(P)^{\mathrm{op}}\quad  \text{ where }\quad 
P:=\bigoplus_{Q\in\mathrm{Ind}(\mathcal{A})/\cong}Q.
\end{displaymath}

Let $\cC$ be a $2$-category and $\mathbf{M}$ a finitary $2$-representation of $\cC$. Then the
{\em abelianization} of $\mathbf{M}$ is the $2$-representation $\overline{\mathbf{M}}$ of $\cC$
which assigns to each $\mathtt{i}\in\cC$ the category $\overline{\mathbf{M}}(\mathtt{i})$ with
the action of $\cC$ defined on diagrams component-wise.

Directly from the definition it follows that the action of each $1$-morphism on the abelianization of
any finitary $2$-representation is right exact.

A finitary $2$-representation $\mathbf{M}$ of $\cC$ will be called {\em exact} provided that 
$\overline{\mathbf{M}}(\mathrm{F})$ is exact for any $1$-morphism $\mathrm{F}$ in $\cC$.
For example, any finitary $2$-representation of a weakly fiat $2$-category is exact.

\subsection{Perron-Frobenius Theorem}\label{s1.9}

We will use the following classical result due to Perron and Frobenius, see the original papers
\cite{Fro1,Fro2,Pe} or the detailed exposition in \cite[Chapter~8]{Me}.

\begin{theorem}\label{thm1}
Let $A=(a_{i,j})$ be a real $n\times n$ matrix with strictly positive coefficients. 
\begin{enumerate}[$($i$)$]
\item\label{thm1.1} $A$ has a positive real eigenvalue, call it $r$, such that any other (possibly complex)
eigenvalue of $A$ has a strictly smaller absolute value.
\item\label{thm1.2} The eigenvalue $r$ appears with multiplicity one in the characteristic polynomial of $A$.
\item\label{thm1.3} There exists a real eigenvector, call it $\mathbf{v}$, for eigenvalue $r$ with strictly 
positive coefficients, moreover, any real eigenvector of $A$ with strictly positive coefficients is a 
multiple of $\mathbf{v}$.
\item\label{thm1.4} The eigenvalue $r$ satisfies
\begin{displaymath}
\min_j\{\sum_ia_{ij}\}\leq r\leq \max_j\{\sum_ia_{ij}\}.
\end{displaymath}
\end{enumerate}
\end{theorem}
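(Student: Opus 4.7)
The plan is to follow the classical Collatz--Wielandt approach. First I would establish existence of the Perron eigenpair. Since the entries of $A$ are strictly positive, the continuous map $\Phi(v)=Av/\|Av\|_1$ carries the standard simplex $\Delta=\{v\in\mathbb{R}^n_{\geq 0}:\sum_iv_i=1\}$ into its relative interior, and Brouwer's fixed point theorem produces $\mathbf{v}\in\Delta$ with $A\mathbf{v}=r\mathbf{v}$ for some $r>0$; positivity of $A$ then forces every coordinate of $\mathbf{v}$ to be strictly positive. A symmetric application to $A^T$ yields a strictly positive left eigenvector $\mathbf{v}^L$ with eigenvalue $r^L>0$, and the identity $r^L\mathbf{v}^L\mathbf{v}=\mathbf{v}^LA\mathbf{v}=r\mathbf{v}^L\mathbf{v}$ shows $r^L=r$. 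For the uniqueness in (iii), if $\mathbf{w}$ is another strictly positive right eigenvector with eigenvalue $r'$, the same pairing yields $r'=r$. Setting $t=\min_i\mathbf{w}_i/\mathbf{v}_i>0$, the vector $\mathbf{w}-t\mathbf{v}$ is nonnegative with at least one zero coordinate and satisfies $A(\mathbf{w}-t\mathbf{v})=r(\mathbf{w}-t\mathbf{v})$; if it were nonzero, applying $A$ would produce a strictly positive vector, contradicting the presence of a zero coordinate. Hence $\mathbf{w}=t\mathbf{v}$.

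For parts (i) and (ii), given any complex eigenvalue $\lambda$ with eigenvector $u$, the triangle inequality yields $A|u|\geq|\lambda|\,|u|$ coordinatewise, where $|u|$ is the vector of componentwise moduli. Pairing with $\mathbf{v}^L$ gives $|\lambda|\,\mathbf{v}^L|u|\leq\mathbf{v}^LA|u|=r\mathbf{v}^L|u|$, hence $|\lambda|\leq r$. If equality holds, the equality case of the triangle inequality, combined with $a_{ij}>0$ for all $i,j$, forces all coordinates of $u$ to share a common complex phase, so $u$ is a nonzero complex scalar multiple of a positive real eigenvector, and (iii) then yields $\lambda=r$. For algebraic simplicity of $r$ in the characteristic polynomial, I would argue that a Jordan block of size at least two at $r$ would produce a generalized eigenvector $\mathbf{u}$ with $\|A^k\mathbf{u}\|/r^k\to\infty$, contradicting the comparison bound supplied by the Perron eigenvectors $\mathbf{v}$ and $\mathbf{v}^L$.

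Finally, (iv) follows by pairing $\mathbf{1}^T$ with the identity $A\mathbf{v}=r\mathbf{v}$: since $(\mathbf{1}^T A)_j=\sum_ia_{ij}$, the equation $\sum_j(\sum_ia_{ij})v_j=r\sum_jv_j$ expresses $r$ as a weighted average of the column sums $\sum_ia_{ij}$ with strictly positive weights $v_j$, immediately giving the stated bounds. The main obstacle is the dominance statement in (i): extracting the ``common phase'' conclusion from the equality case of the triangle inequality is exactly where the hypothesis of strictly positive (rather than merely nonnegative) entries becomes essential, and the accompanying algebraic simplicity argument for (ii) requires more care than the other parts.
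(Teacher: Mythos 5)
First, note that the paper does not prove this statement at all: Theorem~\ref{thm1} is quoted as the classical Perron--Frobenius theorem with references to the original papers of Perron and Frobenius and to Meyer's book, so your argument is being judged on its own merits rather than against an in-paper proof. Your Collatz--Wielandt/Brouwer outline is the standard self-contained route, and parts (\ref{thm1.1}), (\ref{thm1.3}) and (\ref{thm1.4}) are essentially complete: the fixed point of $v\mapsto Av/\|Av\|_1$ on the simplex, the strictly positive left eigenvector for $A^t$, the pairing argument giving $|\lambda|\leq r$, the equality/phase analysis, the subtraction trick $\mathbf{w}-t\mathbf{v}$, and the weighted-average reading of $\mathbf{1}^tA\mathbf{v}=r\,\mathbf{1}^t\mathbf{v}$ are all correct as sketched.

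The genuine gap is in part (\ref{thm1.2}). Ruling out a Jordan block of size at least two at $r$ only shows that $r$ is a \emph{semisimple} eigenvalue; it does not show that its multiplicity in the characteristic polynomial is one, because a priori the eigenspace of $r$ could have dimension bigger than one (several Jordan blocks of size one). Your statement (\ref{thm1.3}) as proved only covers strictly positive eigenvectors, so it does not by itself exclude this. You must additionally prove geometric simplicity: every (real or complex) eigenvector $u$ with $Au=ru$ is a scalar multiple of $\mathbf{v}$. Fortunately the ingredients are already in your text: pairing $A|u|\geq r|u|$ with the strictly positive left eigenvector forces $A|u|=r|u|$, hence $|u|$ is strictly positive, and the equality case of the triangle inequality (with all $a_{ij}>0$) forces a common phase, so $u$ is a complex multiple of a strictly positive eigenvector, which by (\ref{thm1.3}) is a multiple of $\mathbf{v}$; alternatively, for real $u$ one can reuse your subtraction trick on $\mathbf{v}-su$ for a suitable $s$. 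With geometric simplicity in hand, your exclusion of size-$\geq 2$ blocks (via the bound $-c\,r^k\mathbf{v}\leq A^k u\leq c\,r^k\mathbf{v}$ obtained by sandwiching $u$ between multiples of $\mathbf{v}$, or more directly by pairing $(A-rI)u=\mathbf{v}$ against $\mathbf{v}^L$ to get the contradiction $0=\mathbf{v}^L\mathbf{v}>0$) does yield multiplicity one; please make this two-step structure explicit.
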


\begin{corollary}\label{corpf}
Assume that $A$ is as in Theorem~\ref{thm1} and has rank one. Then,
if either inequality in Theorem~\ref{thm1}\eqref{thm1.4} is an equality, then both inequalities are equalities
and all columns of $A$ coincide. 
\end{corollary}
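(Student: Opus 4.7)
The plan is to exploit the rank-one structure of $A$ directly. Since all entries of $A$ are strictly positive and $A$ has rank one, I can factor $A=\mathbf{u}\mathbf{w}^{T}$ where $\mathbf{u}=(u_{1},\ldots,u_{n})^{T}$ and $\mathbf{w}=(w_{1},\ldots,w_{n})^{T}$ both have strictly positive entries (take $\mathbf{u}$ proportional to any column of $A$ and read off $\mathbf{w}$ from the remaining columns). A short computation gives $A\mathbf{u}=\mathbf{u}(\mathbf{w}^{T}\mathbf{u})$, so $\mathbf{u}$ is a strictly positive eigenvector of $A$ with eigenvalue $\mathbf{w}^{T}\mathbf{u}$. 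By Theorem~\ref{thm1}\eqref{thm1.3}, this eigenvalue must coincide with the Perron eigenvalue, so $r=\sum_{i}w_{i}u_{i}$.

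Next I want to express $r$ as a convex combination of the column sums. Writing $S:=\sum_{i}u_{i}>0$, the $j$-th column sum is $c_{j}:=\sum_{i}a_{ij}=w_{j}S$. Therefore
\[
r=\sum_{j}w_{j}u_{j}=\sum_{j}\frac{c_{j}}{S}\,u_{j}=\sum_{j}c_{j}\cdot\frac{u_{j}}{S},
\]
which exhibits $r$ as a convex combination of $c_{1},\ldots,c_{n}$ with strictly positive weights $u_{j}/S$ summing to $1$.

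From here the conclusion is immediate. If $r=\min_{j}c_{j}$ or $r=\max_{j}c_{j}$, then since all weights are strictly positive the convex combination collapses, forcing $c_{1}=c_{2}=\cdots=c_{n}=r$; in particular both inequalities in Theorem~\ref{thm1}\eqref{thm1.4} become equalities. Moreover $w_{j}S=c_{j}=r$ gives $w_{j}=r/S$ for every $j$, so $\mathbf{w}$ is a constant vector and consequently the columns $w_{j}\mathbf{u}$ of $A$ all coincide.

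The only mildly delicate point is the identification $\mathbf{w}^{T}\mathbf{u}=r$, which uses the uniqueness clause for positive eigenvectors in Theorem~\ref{thm1}\eqref{thm1.3}. Beyond that the argument is essentially the elementary observation that a convex combination of real numbers with strictly positive weights equals the minimum (or maximum) only when all of those numbers are equal, so no real obstacle is expected.
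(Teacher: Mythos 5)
Your argument is correct, and it takes a slightly different route from the paper's. The paper exploits rank one via the trace: since $r$ has multiplicity one and all other eigenvalues of a rank-one matrix vanish, $\mathrm{trace}(A)=r$; it then writes each column as $\lambda_j$ times the column realizing the minimal column sum, notes $\lambda_j\geq 1$, and gets $r=\mathrm{trace}(A)=\sum_i\lambda_i a_{i1}\geq\sum_i a_{i1}=r$, forcing all $\lambda_j=1$ (the max case being symmetric). You instead factor $A=\mathbf{u}\mathbf{w}^{T}$ with $\mathbf{u},\mathbf{w}$ strictly positive, identify $\mathbf{w}^{T}\mathbf{u}=r$ via the uniqueness clause of Theorem~\ref{thm1}\eqref{thm1.3} (whereas the paper implicitly leans on \eqref{thm1.1}--\eqref{thm1.2} through the trace), and then express $r$ as a convex combination of the column sums with strictly positive weights $u_j/S$; equality with the minimum or maximum then collapses the combination. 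The two proofs are close at heart --- your identity $r=\mathbf{w}^{T}\mathbf{u}$ is exactly $\mathrm{trace}(A)=r$ in disguise --- but your packaging has two small advantages: it treats the min and max cases in one stroke rather than ``the other case is similar,'' and it makes visible the slightly stronger fact that $r$ is a strictly positive convex combination of the column sums, so $r$ is pinned strictly between them unless all column sums (hence all columns, since $c_j=w_jS$ determines $w_j$) coincide. All the steps you flag as delicate are indeed justified: a rank-one matrix with positive entries does admit such a positive factorization, and a positive eigenvector must be a multiple of $\mathbf{v}$ and hence has eigenvalue $r$.
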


\begin{proof}
If $A$ has rank one, then all columns of $A$ are proportional to $\mathbf{v}$ and the trace of $A$ equals $r$.
Assume, for example, that $\min_j\{\sum_ia_{ij}\}=\sum_ia_{i1}=r$.
Set $\lambda_1=1$ and for $j=2,3,\dots,n$ let $\lambda_j$
be the positive real number ($\geq 1$) such that the $j$-th column equals $\lambda_j$ times the first column.
Then, we have
\begin{displaymath}
\sum_ia_{i1}=r=\mathrm{trace}(A)=\sum_ia_{ii}=\sum_i\lambda_i a_{i1}\geq \sum_i a_{i1}=r.
\end{displaymath}
It follows that $\lambda_j=1$ for all $j$. The case where the second inequality is an equality is similar. 
\end{proof}

\section{Transitive $2$-representations}\label{s2}

In this section, $\cC$ will be a finitary $2$-category. 

\subsection{Definition}\label{s2.1}

Let $\mathbf{M}$ be a finitary $2$-representations of $\cC$. We will say that $\mathbf{M}$ is {\em transitive}
provided that for every $\mathtt{i}$ and for every non-zero object $X\in\mathbf{M}(\mathtt{i})$ we have
$\mathbf{G}_{\mathbf{M}}(\{X\})=\mathbf{M}$.

\subsection{Example: transitive group actions}\label{s2.2}

Let $G=(G,\cdot)$ be a finite group. Consider the finitary $2$-category $\cG=\cG_G$ defined as follows:
\begin{itemize}
\item $\cG$ has one object $\clubsuit$;
\item $1$-morphisms in $\cG$ are $\displaystyle \bigoplus_{g\in G} \mathrm{F}_{g}^{\oplus k_g}$ where all $k_g\geq 0$;
\item composition of $1$-morphisms is given by $\mathrm{F}_g\circ \mathrm{F}_h=\mathrm{F}_{gh}$ and extended by biadditivity;
\item non-zero $2$-morphisms between indecomposable $1$-morphisms are just scalar multiples of the identity,
$2$-morphisms between decomposable $1$-morphisms are matrices of morphisms between the corresponding
indecomposable summands;
\item vertical composition of $2$-morphisms is given by matrix multiplication;
\item horizontal composition of $2$-morphisms is given by tensor product of matrices.
\end{itemize}
The $2$-category $\cG$ is finitary by definition. Moreover, it is even a fiat $2$-category
(where $*$ is induced by  $g\mapsto g^{-1}$).

Let $H$ be a subgroup of $G$. Let $\mathcal{A}$ be a small category equivalent to $\Bbbk\text{-}\mathrm{mod}$.
Consider the category 
\begin{displaymath}
\mathcal{G}_{H,\mathcal{A}}:=\bigoplus_{gH\in G/H}\mathcal{A}_{(gH)}, 
\end{displaymath}
where $(gH)$ is a formal index. Now define the $2$-representation $\mathbf{M}_{H,\mathcal{A}}$ of $\cG$
\begin{itemize}
\item on the object by $\mathbf{M}_{H,\mathcal{A}}(\clubsuit)=\mathcal{G}_{H,\mathcal{A}}$;
\item on $1$-morphisms by $\mathbf{M}_{H,\mathcal{A}}(\mathrm{F}_g)=\left(\varphi_{xH,yH}\right)_{xH,yH\in G/H}$ where
\begin{displaymath}
\varphi_{xH,yH}=
\begin{cases}
\mathrm{Id}_{\mathcal{A}},&  gyH=xH;\\
0,&  \text{otherwise};\\ 
\end{cases}
\end{displaymath}
\item on $2$-morphisms $\mathbf{M}_{H,\mathcal{A}}$ in the obvious way using scalar multiples of the identity
natural transformations.
\end{itemize}
It follows from the definition that $\mathbf{M}_{H,\mathcal{A}}$ is a transitive $2$-representation of $\cG$.
This $2$-representation categorifies the classical transitive action of $G$ on $G/H$. 

Note that in the above construction instead of $\mathcal{A}$ we can take any small finitary additive $\Bbbk$-linear
category $\mathcal{B}$ with one isomorphism class of indecomposable objects. 

This example generalizes, in the obvious way, to finite semigroups. One major difference is that in the latter
case the $2$-category obtained will not be fiat but only finitary. Another difference is that while
any transitive action of a finite group on a finite set is equivalent to the action on some $G/H$, transitive
actions of semigroups are more complicated, see e.g. \cite[Chapter~10]{GM}.

\subsection{Cell $2$-representations}\label{s2.3}

Here we use the approach from \cite{MM2} to construct cell $2$-representations for arbitrary finitary $2$-categories. 

Let $\mathcal{L}$ be a left cell in $\cC$. Then there is 
$\mathtt{i}=\mathtt{i}_{\mathcal{L}}\in\cC$ such that every $1$-morphism in $\mathcal{L}$
has domain $\mathtt{i}$. Consider the principal $2$-representation $\mathbb{P}_{\mathtt{i}}$. 
For $\mathtt{j}\in\cC$ let $\mathbf{N}(\mathtt{j})$ denote the additive closure in 
$\mathbb{P}_{\mathtt{i}}(\mathtt{j})$ of all $1$-morphisms 
$\mathrm{F}\in \cC(\mathtt{i},\mathtt{j})$ such that $\mathrm{F}\geq_L \mathcal{L}$.
Then $\mathbf{N}$ is a $2$-subrepresentation of $\mathbb{P}_{\mathtt{i}}$. 

\begin{lemma}\label{lem2}
There is a unique maximal ideal $\mathbf{I}$ in $\mathbf{N}$ which does not contain 
$\mathrm{id}_{\mathrm{F}}$ for any $\mathrm{F}\in\mathcal{L}$.
\end{lemma}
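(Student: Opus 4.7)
The plan is to define $\mathbf{I}$ as the sum of all ideals of $\mathbf{N}$ that avoid every $\mathrm{id}_{\mathrm{F}}$ with $\mathrm{F}\in\mathcal{L}$, and then verify that this sum still avoids them. Let $\mathfrak{F}$ denote the collection of ideals $\mathbf{J}$ of $\mathbf{N}$ such that $\mathrm{id}_{\mathrm{F}}\notin\mathbf{J}(\mathtt{j})$ for every $\mathtt{j}\in\cC$ and every $\mathrm{F}\in\mathcal{L}\cap\cC(\mathtt{i},\mathtt{j})$. The zero ideal lies in $\mathfrak{F}$, so it is non-empty. Set $\mathbf{I}:=\sum_{\mathbf{J}\in\mathfrak{F}}\mathbf{J}$, where sums of ideals are taken componentwise in each $\mathbf{N}(\mathtt{j})$ on each hom-space. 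Stability under the $\cC$-action is inherited from the summands, so $\mathbf{I}$ is an ideal of $\mathbf{N}$, and if it lies in $\mathfrak{F}$ then it is automatically the unique maximal such ideal.

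The crux is thus the membership $\mathbf{I}\in\mathfrak{F}$. Fix $\mathtt{j}\in\cC$ and an indecomposable $\mathrm{F}\in\mathcal{L}\cap\cC(\mathtt{i},\mathtt{j})$. Since $\cC(\mathtt{i},\mathtt{j})$ is finitary, Krull--Schmidt applies and the endomorphism algebra $R:=\mathrm{End}_{\cC(\mathtt{i},\mathtt{j})}(\mathrm{F})$ is a finite-dimensional local $\Bbbk$-algebra with unique maximal two-sided ideal $\mathrm{rad}(R)$. For every $\mathbf{J}\in\mathfrak{F}$, the subspace $\mathbf{J}(\mathtt{j})(\mathrm{F},\mathrm{F})$ is a two-sided ideal of $R$ not containing $\mathrm{id}_{\mathrm{F}}$, hence a proper ideal, hence contained in $\mathrm{rad}(R)$. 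Since the sum of ideals of $\mathbf{N}$ restricted to the single hom-space $R$ is literally the sum of the restrictions,
\begin{displaymath}
\mathbf{I}(\mathtt{j})(\mathrm{F},\mathrm{F})\;=\;\sum_{\mathbf{J}\in\mathfrak{F}}\mathbf{J}(\mathtt{j})(\mathrm{F},\mathrm{F})\;\subseteq\;\mathrm{rad}(R),
\end{displaymath}
so $\mathrm{id}_{\mathrm{F}}\notin\mathbf{I}(\mathtt{j})$. This yields $\mathbf{I}\in\mathfrak{F}$ and completes the argument.

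The main obstacle is precisely this last verification: a priori, taking an arbitrary sum of ideals can produce new elements as finite combinations of morphisms drawn from several different ideals, and it is not a priori clear that no such combination coincidentally equals $\mathrm{id}_{\mathrm{F}}$. The locality of $R$, which stems from the indecomposability of the $1$-morphisms in $\mathcal{L}$ together with the finitariness of $\cC(\mathtt{i},\mathtt{j})$, is the exact ingredient that rules this out, because the sum of proper two-sided ideals of a local ring remains inside its Jacobson radical.
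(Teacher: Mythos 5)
Your proof is correct and follows essentially the same route as the paper: define $\mathbf{I}$ as the sum of all ideals avoiding the identities of $1$-morphisms in $\mathcal{L}$, then use that the $2$-endomorphism algebra of each indecomposable $\mathrm{F}\in\mathcal{L}$ is local, so every such ideal meets it inside the radical, and a sum of subspaces of the radical stays in the radical. No gaps; this is exactly the paper's argument, spelled out in slightly more detail.
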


\begin{proof}
Being an ideal of an additive category, $\mathbf{I}$ is uniquely determined by its morphisms between 
indecomposable objects. If $\mathrm{F}\in\mathcal{L}\cap\cC(\mathtt{i},\mathtt{j})$, 
then the algebra of $2$-endomorphisms of $\mathrm{F}$
is local as $\mathrm{F}$ is indecomposable. Therefore the part of 
$\mathrm{End}_{\ccC(\mathtt{i},\mathtt{j})}(\mathrm{F})$ contained
in $\mathbf{I}$ belongs to the radical of 
$\mathrm{End}_{\ccC(\mathtt{i},\mathtt{j})}(\mathrm{F})$. As the sum of two subspaces
of the radical is contained in the radical, we conclude that the sum of all left ideals in $\mathbf{N}$ 
which do not contain $\mathrm{id}_{\mathrm{F}}$ for any $\mathrm{F}\in\mathcal{L}$ still has the
latter property. The claim follows.
\end{proof}

The quotient $2$-functor $\mathbf{C}_{\mathcal{L}}:=\mathbf{N}/\mathbf{I}$, where $\mathbf{I}$ is given by
Lemma~\ref{lem2}, is called the {\em (additive) cell $2$-representation} of $\cC$ associated to $\mathcal{L}$.
From the definitions, it follows directly that $\mathbf{C}_{\mathcal{L}}$ is a transitive $2$-representation of
$\cC$.

\subsection{A more exotic example}\label{s2.4}

Similarly to Subsection~\ref{s2.2} one defines a $2$-category $\cC$ with one object, indecomposable
$1$-morphisms $\mathbbm{1}$ and $\mathrm{F}$, with the multiplication table
\begin{displaymath}
\begin{array}{c||c|c}
\circ&\mathbbm{1}&\mathrm{F}\\
\hline\hline
\mathbbm{1}&\mathbbm{1}&\mathrm{F}\\
\hline
\mathrm{F}&\mathrm{F}&\mathrm{F}\oplus\mathrm{F} 
\end{array}
\end{displaymath}
and only scalar multiples of the identity $2$-morphisms for indecomposable $1$-morphisms.
This $2$-category $\cC$ has two left cells (corresponding to the two indecomposable 
$1$-morphisms), so we have the respective cell $2$-representations. These are transitive, see Subsection~\ref{s2.3}.
Similarly to Subsection~\ref{s2.2} one can construct a rather different transitive $2$-representation on a
category $\mathcal{A}\oplus \mathcal{A}$, where $\mathcal{A}$ is as in Subsection~\ref{s2.2}, by mapping the
$1$-morphism $\mathrm{F}$ to the functor
\begin{displaymath}
\left(\begin{array}{cc}\mathrm{Id}_{\mathcal{A}} &\mathrm{Id}_{\mathcal{A}}\\
\mathrm{Id}_{\mathcal{A}}&\mathrm{Id}_{\mathcal{A}}\end{array}\right).
\end{displaymath}

\subsection{Simple transitive $2$-representations}\label{s2.5}

Let $\mathbf{M}$ be a transitive $2$-representation of $\cC$.

\begin{lemma}\label{lem3}
There is a unique maximal ideal $\mathbf{I}$ in $\mathbf{M}$ which does not contain any identity morphisms
apart from the one for the zero object.
\end{lemma}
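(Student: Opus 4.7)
The plan is to closely mimic the proof of Lemma~\ref{lem2}. Define $\mathfrak{S}$ to be the collection of all ideals of $\mathbf{M}$ which fail to contain $\mathrm{id}_X$ for any non-zero object $X\in\mathbf{M}(\mathtt{i})$ and any $\mathtt{i}\in\cC$. I would take $\mathbf{I}$ to be the sum of all members of $\mathfrak{S}$ and verify that $\mathbf{I}\in\mathfrak{S}$; maximality and uniqueness are then immediate from the construction.

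The first step is to reduce the question to indecomposable objects. An ideal of an additive category is determined by its components in $\mathrm{Hom}$-spaces between indecomposables, and for a decomposition $X\cong X_1\oplus\cdots\oplus X_n$ into non-zero indecomposables with projections $p_i$ and inclusions $\iota_i$, one has $\mathrm{id}_{X_i}=p_i\circ\mathrm{id}_X\circ\iota_i$, so $\mathrm{id}_X\in\mathbf{J}$ forces $\mathrm{id}_{X_i}\in\mathbf{J}$ for each $i$. Hence $\mathbf{J}\in\mathfrak{S}$ is equivalent to the statement that $\mathrm{id}_Y\notin\mathbf{J}$ for every non-zero indecomposable $Y$ in any $\mathbf{M}(\mathtt{i})$.

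The second step is the Jacobson-radical argument. For such a $Y$, finitarity of $\mathbf{M}(\mathtt{i})$ makes $\mathrm{End}_{\mathbf{M}(\mathtt{i})}(Y)$ a finite-dimensional local $\Bbbk$-algebra. For any $\mathbf{J}\in\mathfrak{S}$, the subspace $\mathbf{J}(Y,Y)$ is an ideal of this local algebra missing $\mathrm{id}_Y$, and therefore lies in the Jacobson radical. Because sums of subspaces of the radical stay in the radical, the same conclusion applies to $\mathbf{I}(Y,Y)$, giving $\mathrm{id}_Y\notin\mathbf{I}(Y,Y)$ and so $\mathbf{I}\in\mathfrak{S}$.

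Transitivity is not directly invoked, but it does sharpen the statement: if some $\mathbf{J}$ contained $\mathrm{id}_X$ for a non-zero indecomposable $X\in\mathbf{M}(\mathtt{i})$, then closure of $\mathbf{J}$ under the $2$-action would yield $\mathrm{id}_{\mathrm{F}X}=\mathbf{M}(\mathrm{F})(\mathrm{id}_X)\in\mathbf{J}$ for every $1$-morphism $\mathrm{F}$, and the equality $\mathbf{G}_{\mathbf{M}}(\{X\})=\mathbf{M}$ together with the reduction in the first step would force $\mathbf{J}=\mathbf{M}$. So the lemma really identifies the unique maximal proper ideal of the transitive $\mathbf{M}$. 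There is no serious obstacle here beyond being careful to translate identities of decomposable objects into identities of indecomposable summands; the substantive content is the same local-endomorphism argument underlying Lemma~\ref{lem2}.
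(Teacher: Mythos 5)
Your argument is correct and is exactly what the paper intends: its proof of Lemma~\ref{lem3} is literally ``mutatis mutandis the proof of Lemma~\ref{lem2}'', i.e.\ the same reduction to indecomposable objects, the locality of their endomorphism algebras, and the fact that a sum of subspaces of the radical stays in the radical. Your closing remark about transitivity is a harmless extra observation, not needed for the statement.
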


\begin{proof}
Mutatis mutandis proof of Lemma~\ref{lem2}. 
\end{proof}

The main idea of the following definition generalizes \cite[Subsection~6.5]{MM2}. A
transitive $2$-representation $\mathbf{M}$  of $\cC$ is called {\em simple transitive} provided that 
its unique maximal ideal given by Lemma~\ref{lem3} is the zero ideal. 
For a transitive $2$-representation $\mathbf{M}$ denote by $\underline{\mathbf{M}}$ the quotient of
$\mathbf{M}$ by the ideal $\mathbf{I}$ given by Lemma~\ref{lem3}. We will loosely call $\underline{\mathbf{M}}$
the {\em simple transitive quotient} of $\mathbf{M}$.

\subsection{Examples of simple transitive $2$-representations}\label{s2.7}

Lemma~\ref{lem2} implies that each cell $2$-representation of $\cC$ is simple transitive.
Furthermore, transitive $2$-representations $\mathbf{M}_{H,\mathcal{A}}$ of $\cG$ constructed in Subsection~\ref{s2.2} 
are simple transitive (and these are not equivalent to cell $2$-representations in general). 
In fact, the next proposition shows that these exhaust all simple transitive $2$-representations of $\cG$.

\begin{proposition}\label{prop35}
Every simple transitive $2$-representations of $\cG$ is equivalent to $\mathbf{M}_{H,\mathcal{A}}$ 
for some subgroup $H$ of $G$ and a skeletal category $\mathcal{A}$ equivalent to $\Bbbk\text{-}\mathrm{mod}$.
\end{proposition}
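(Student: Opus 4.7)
The plan is to use simple transitivity to force $\mathbf{M}(\clubsuit)$ to be semisimple (in the strong sense that every indecomposable has endomorphism ring $\Bbbk$ and distinct indecomposables admit no maps between them), and then to read off the pair $(H,\mathcal{A})$ from the resulting transitive $G$-action on iso classes of indecomposables.

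First I would carry out the key step: vanishing of the categorical radical. Because $\mathbf{M}$ is a strict $2$-functor and $\mathrm{F}_g\circ \mathrm{F}_{g^{-1}}=\mathbbm{1}_\clubsuit$ in $\cG$, each $\mathbf{M}(\mathrm{F}_g)$ is an isomorphism of categories, hence preserves the categorical Jacobson radical $\mathbf{J}$ of $\mathbf{M}(\clubsuit)$---i.e.\ the ideal consisting of all morphisms between non-isomorphic indecomposables together with all endomorphisms lying in the radical of the (local) algebra $\mathrm{End}(X)$ for each indecomposable $X$. Thus $\mathbf{J}$ is a $\cG$-stable ideal of $\mathbf{M}$ containing no identity of a non-zero object, so by Lemma~\ref{lem3} and simple transitivity $\mathbf{J}=0$. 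Algebraic closedness of $\Bbbk$ then forces $\mathrm{End}(X)=\Bbbk$ for every indecomposable $X$ and $\mathrm{Hom}(X,Y)=0$ for non-isomorphic indecomposables $X,Y$.

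Next I would identify $H$ and $\mathcal{A}$ and build the equivalence. The $\mathbf{M}(\mathrm{F}_g)$ permute the (finite) set of iso classes of indecomposables, and transitivity forces this $G$-action to be transitive. Pick an indecomposable $X$, set $H:=\mathrm{Stab}_G([X])$, and fix coset representatives $(g_\alpha)_{\alpha\in G/H}$ with $g_{eH}=e$. Let $\mathcal{A}$ be skeletal equivalent to $\Bbbk\text{-}\mathrm{mod}$ with unique indecomposable $Z$, and define $\Phi_\clubsuit:\mathbf{M}_{H,\mathcal{A}}(\clubsuit)\to\mathbf{M}(\clubsuit)$ additively by $Z_{(g_\alpha H)}\mapsto \mathrm{F}_{g_\alpha}X$; by the previous step this is an equivalence of additive categories. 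To promote $\Phi$ to a $2$-natural equivalence, for each $g\in G$ one compares $\Phi_\clubsuit\circ\mathbf{M}_{H,\mathcal{A}}(\mathrm{F}_g)$ with $\mathbf{M}(\mathrm{F}_g)\circ\Phi_\clubsuit$ componentwise at $Z_{(g_\alpha H)}$: both yield objects isomorphic to $\mathrm{F}_{g_\beta}X$, where $g_\beta$ represents $gg_\alpha H$. Choosing scalars in $\Bbbk^\times$ to realize these isomorphisms and imposing compatibility under the strict equalities $\mathrm{F}_g\circ\mathrm{F}_h=\mathrm{F}_{gh}$ gives a $\Bbbk^\times$-valued cocycle condition on the scalars, which can be normalised inductively over $G$, yielding the desired equivalence of $2$-representations.

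The hard part will be the first step: recognising that the categorical Jacobson radical is automatically stable under the group of autoequivalences produced by the $\mathrm{F}_g$, and therefore must vanish by Lemma~\ref{lem3}. Once this semisimplicity is secured, the second step is a routine transport-of-structure argument, made transparent by the fact that all endomorphism rings have collapsed to $\Bbbk$ so that every coherence ambiguity lives in $\Bbbk^\times$ and can be rescaled away.
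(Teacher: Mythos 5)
Your first two steps are exactly the paper's: invertibility of each $\mathbf{M}(\mathrm{F}_g)$ makes the radical of $\mathbf{M}(\clubsuit)$ a $\cG$-stable ideal containing no identities, so Lemma~\ref{lem3} and simple transitivity kill it, semisimplicity follows over the algebraically closed field $\Bbbk$, and $H$ is identified as the stabiliser of the isomorphism class of $X$. The genuine gap is in your final normalisation step. The scalars you introduce when comparing $\Phi_\clubsuit\circ\mathbf{M}_{H,\mathcal{A}}(\mathrm{F}_g)$ with $\mathbf{M}(\mathrm{F}_g)\circ\Phi_\clubsuit$ do satisfy a $2$-cocycle condition, but such a cocycle cannot in general be ``normalised inductively over $G$'': the freedom in choosing a transversal only removes part of it, and what remains is a class in $H^2(H,\Bbbk^\times)$, the Schur multiplier of the stabiliser, which is frequently nonzero (e.g. $H^2((\mathbb{Z}/2)^2,\mathbb{C}^\times)\cong\mathbb{Z}/2$). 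Claiming the cocycle can always be rescaled away amounts to claiming $H^2(H,\Bbbk^\times)=0$.

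This is not a removable technicality. Take $G=(\mathbb{Z}/2)^2$, a cohomologically nontrivial cocycle $c$, and the twisted group algebra $R=\Bbbk_c[G]\cong\mathrm{Mat}_2(\Bbbk)$ (char $\Bbbk\neq 2$), with basis $u_g$, $u_gu_h=c(g,h)u_{gh}$. Letting each $\mathrm{F}_g$ act on a small version of $R\text{-}\mathrm{mod}$ by twisting the module structure along conjugation by $u_g$ gives a strict, finitary, simple transitive $2$-representation of $\cG$ with a single indecomposable object $S$; applying any would-be intertwining equivalence to the isomorphisms $u_g\colon \mathrm{F}_g\,S\to S$ and using $u_gu_h=c(g,h)u_{gh}$ would exhibit $c$ as a coboundary, which it is not. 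So the step ``the cocycle can be normalised'' fails, and for this example no equivalence with any $\mathbf{M}_{H,\mathcal{A}}$ exists. To be fair, this is precisely the point at which the paper's own proof is silent (the functor $\Phi$ is merely said to be ``easily checked'' to give an equivalence): the coherent choice of isomorphisms $\mathrm{F}_h\,X\cong X$ for $h\in H$ is obstructed by a class in $H^2(H,\Bbbk^\times)$, and the classification in fact acquires this class as an extra parameter, in line with Ostrik's classification of module categories over $\mathrm{Vec}_G$. Your argument is complete only under the additional hypothesis that the Schur multiplier of $H$ over $\Bbbk$ vanishes (for instance, for cyclic stabilisers).
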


\begin{proof}
Let $\mathbf{M}$ be a simple transitive $2$-representation of $\cG$. 
Invertibility of each $\mathrm{F}_g$ implies that $\mathrm{F}_g$ sends non-isomorphic objects to 
non-isomorphic objects, indecomposable objects to indecomposable objects 
and radical morphisms to radical morphisms. Therefore the ideal
$\mathbf{I}$ given by Lemma~\ref{lem3} coincides with the radical of $\mathbf{M}(\clubsuit)$.
By simple transitivity, we hence obtain that the radical of $\mathbf{M}(\clubsuit)$ is zero
and thus $\mathbf{M}(\clubsuit)$ is a semi-simple category.

As each $\mathrm{F}_g$ sends indecomposable objects to indecomposable objects, $G$ induces a transitive action on the set of isomorphism classes of indecomposable
objects in $\mathbf{M}(\clubsuit)$. Fix an indecomposable object $X\in\mathbf{M}(\clubsuit)$ and set
\begin{displaymath}
H:=\{h\in G\,:\, \mathrm{F}_h\, X\cong X\}. 
\end{displaymath}

Let $\mathcal{A}$ be a skeletal category equivalent to $\Bbbk\text{-}\mathrm{mod}$. 
Consider the (unique!) functor $\Phi:\mathbf{M}(\clubsuit)\to \mathcal{G}_H$ which sends an indecomposable object
$Y\cong \mathrm{F}_g\, X$ for some $g\in G$ to the unique indecomposable object in $\mathcal{A}_{(gH)}$. Then
$\Phi$ is easily checked to give an equivalence between $\mathbf{M}$ and $\mathbf{M}_{H,\mathcal{A}}$.
The claim follows.
\end{proof}

Note that Proposition~\ref{prop35} does not extend to all transitive $2$-representations in an obvious way.
For example, let $G$ be the cyclic group of order two. Then $G$ acts by automorphisms on the finite dimensional
$\Bbbk$-algebra $A$ given by the quiver
\begin{displaymath}
\xymatrix{ 
\mathtt{1}\ar@/^/[rr]^{a}&&\mathtt{2}\ar@/^/[ll]^{b}
}
\end{displaymath}
with relations $ab=ba=0$ (the non-trivial automorphism is given by the automorphism of the quiver which swaps
$\mathtt{1}$ with $\mathtt{2}$ and $a$ with $b$). This induces a transitive action of $G$ and hence of the
corresponding $2$-category $\cG$ on any skeletal category  equivalent to the category 
of finite dimensional projective $A$-modules. We refer to \cite[Section~2]{AM} for more details.

\subsection{Strongly simple $2$-representations are (simple) transitive}\label{s2.6}

In parallel to \cite[Subsection~6.2]{MM1}, we call a finitary $2$-representation $\mathbf{M}$ of $\cC$
{\em strongly simple} provided that for any $\mathtt{i},\mathtt{j}\in\cC$ with $\overline{\mathbf{M}}(\mathtt{i})$
nonzero, any simple object  $L\in\overline{\mathbf{M}}(\mathtt{i})$ and any pair $P,Q$ of indecomposable projectives 
in $\overline{\mathbf{M}}(\mathtt{j})$, there exist indecomposable $1$-morphisms $\mathrm{F}$ and $\mathrm{G}$
such that $\mathrm{F}\, L\cong P$, $\mathrm{G}\, L\cong Q$ and the evaluation map
$\mathrm{Hom}_{\ccC(\mathtt{i},\mathtt{j})}(\mathrm{F},\mathrm{G})\to 
\mathrm{Hom}_{\overline{\mathbf{M}}(\mathtt{j})}(\mathrm{F}\, L,\mathrm{G}\, L)$ is surjective.

\begin{proposition}\label{prop16}
Let $\cC$ be a finitary $2$-category and ${\mathbf{M}}$ a strongly simple finitary 
$2$-representation of $\cC$. 
\begin{enumerate}[$($i$)$]
\item\label{prop16.1} The $2$-representation ${\mathbf{M}}$ is transitive.
\item\label{prop16.2} 
If $\mathbf{M}$ is exact (in particular, if $\cC$ is weakly fiat), then $\mathbf{M}$ is simple transitive.
\end{enumerate}
\end{proposition}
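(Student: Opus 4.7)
For part (i), my approach is a direct application of strong simplicity combined with right exactness on the abelianization. Given a non-zero $X \in \mathbf{M}(\mathtt{i})$, view it in $\overline{\mathbf{M}}(\mathtt{i})$, where it is a non-zero projective and therefore admits a simple top $L$. For any $\mathtt{j}$ and any indecomposable $P \in \mathbf{M}(\mathtt{j})$ (viewed as an indecomposable projective in $\overline{\mathbf{M}}(\mathtt{j})$), strong simplicity furnishes an indecomposable $1$-morphism $\mathrm{F}: \mathtt{i} \to \mathtt{j}$ with $\mathrm{F}\, L \cong P$. Applying $\mathrm{F}$ to the canonical surjection $X \twoheadrightarrow L$ and invoking right exactness on the abelianization yields a surjection $\mathrm{F}\, X \twoheadrightarrow P$; since $P$ is projective this splits, so $P$ is a direct summand of $\mathrm{F}\, X$ and hence lies in $\mathbf{G}_{\mathbf{M}}(\{X\})$. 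Letting $P$ and $\mathtt{j}$ vary gives $\mathbf{G}_{\mathbf{M}}(\{X\}) = \mathbf{M}$.

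For part (ii) I would argue by contradiction. Assume the unique maximal ideal $\mathbf{I}$ of Lemma~\ref{lem3} is non-zero and pick a non-zero $\alpha: X \to Y$ in $\mathbf{I}$ with $X, Y$ indecomposable in some $\mathbf{M}(\mathtt{i})$. The first step is a dichotomy based on the surjectivity of $\alpha$ in $\overline{\mathbf{M}}(\mathtt{i})$. If $\alpha$ is surjective, passing to simple tops forces the induced map $L_X \to L_Y$ to be a non-zero map of simples, hence an isomorphism, so $X \cong Y$ as projective covers. Identifying $X = Y$, the surjective endomorphism $\alpha$ of the finite-length indecomposable $X$ is automatically an isomorphism, so $\mathrm{id}_X = \alpha^{-1}\alpha \in \mathbf{I}$, contradicting the defining property of $\mathbf{I}$.

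The hard case is when $\alpha$ is not surjective, so that $I := \mathrm{Im}(\alpha)$ is contained in $\mathrm{rad}(Y)$ yet still has simple top $L_X = \mathrm{top}(X)$ (being a non-zero quotient of the indecomposable projective $X$). The strategy is to use strong simplicity and exactness to ``promote'' $\alpha$ to a morphism in $\mathbf{I}$ which becomes surjective onto some indecomposable summand, thereby reducing to the easy case. Concretely, lift $\alpha$ via strong simplicity to a $2$-morphism $\eta: \mathrm{F} \to \mathrm{G}$ in $\cC$ with $\mathrm{F}\, L_X \cong X$, $\mathrm{G}\, L_X \cong Y$ and $\eta_{L_X} = \alpha$. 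Exactness of $\mathrm{F}$ and $\mathrm{G}$ applied to the top-projection $\pi: X \twoheadrightarrow L_X$ produces split surjections $\mathrm{F}\pi: \mathrm{F}\, X \twoheadrightarrow X$ and $\mathrm{G}\pi: \mathrm{G}\, X \twoheadrightarrow Y$ (since $X, Y$ are projective), so $X$ and $Y$ occur as explicit direct summands of $\mathrm{F}\, X$ and $\mathrm{G}\, X$ respectively. Naturality of $\eta$ in $\pi$ gives the identity
\begin{displaymath}
\alpha \circ \mathrm{F}\pi \;=\; \mathrm{G}\pi \circ \eta_X,
\end{displaymath}
whose left-hand side lies in $\mathbf{I}$; combining this with further applications of strong simplicity (using different choices of simple and target projectives) and the ideal-closure of $\mathbf{I}$ under composition with arbitrary morphisms and under the action of $1$-morphisms, one extracts a surjective morphism in $\mathbf{I}$ between an indecomposable object and itself, reducing to the first case.

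The main obstacle is the bookkeeping of summand decompositions in the non-surjective case. The freedom provided by strong simplicity (every indecomposable projective is reachable from any simple, and all Hom-evaluations are surjective) together with exactness (every $1$-morphism preserves kernels and images) is precisely what allows one to manoeuvre $\alpha$ into a surjective configuration without ever leaving $\mathbf{I}$; turning this intuition into a clean composition identity is the delicate step.
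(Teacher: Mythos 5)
Your part~(\ref{prop16.1}) is correct and is essentially the paper's own argument: take a simple quotient $L$ of $X$ in $\overline{\mathbf{M}}(\mathtt{i})$, use strong simplicity to hit any indecomposable $P$ by some $\mathrm{F}\,L$, and use right exactness plus projectivity of $P$ to split $\mathrm{F}\,X\tto P$. The surjective case of your part~(\ref{prop16.2}) is also fine (though it is the trivial case: a surjection onto a projective splits, so between indecomposables it is already an isomorphism). The problem is the non-surjective case, which is where all the content lies, and there your argument stops at exactly the decisive point. The identity $\alpha\circ\mathrm{F}\pi=\mathrm{G}\pi\circ\eta_X$ carries no new information about $\mathbf{I}$: its left-hand side lies in $\mathbf{I}$ simply because $\alpha$ does, so it cannot by itself force any non-radical morphism into $\mathbf{I}$. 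The promised step ``one extracts a surjective morphism in $\mathbf{I}$ between an indecomposable object and itself'' is precisely the theorem, and no mechanism is given that could produce it. Worse, every tool you actually deploy in that case --- the lift $\eta$ via evaluation surjectivity, naturality, ideal closure, and the split surjections $\mathrm{F}\,X\tto\mathrm{F}\,L$ (these need only right exactness, which holds for every abelianized finitary $2$-representation) --- is available verbatim in Example~\ref{ex17} (the dual numbers), where the $2$-representation is strongly simple but \emph{not} simple transitive: the ideal generated by the radical endomorphism $x$ is annihilated by the non-identity $1$-morphism and never acquires an identity morphism. So no amount of composing and applying $1$-morphisms in the way you describe can close the argument; exactness must enter in a structurally stronger way than through those split surjections.

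The missing idea is the one the paper supplies. Using exactness of $\overline{\mathbf{M}}(\mathrm{F})$ (Lemma~\ref{lem8}-type reasoning with $\overline{\mathbf{M}}(\mathtt{i})\cong B\text{-}\mathrm{mod}$, $X\cong Be$, $Y\cong Be'$, $L$ the top of $X$), one shows that $\overline{\mathbf{M}}(\mathrm{F})$ surjects onto the genuine projective functor $Be\otimes_{\Bbbk}eB\otimes_B{}_-$. Because the values of this projective functor on objects of $\mathbf{M}(\mathtt{i})$ are projective, the surjection splits objectwise, so for $\eta'\in\mathbf{I}$ the morphism $Be\otimes_{\Bbbk}eB\otimes_B\eta'$ again lies in $\mathbf{I}$; and for \emph{any} non-zero $\eta'\colon Be\to Be'$ the map $\mathrm{Id}_{Be}\otimes\mathrm{Id}_{eB}\otimes\eta'$ contains $\mathrm{id}_{Be}=\mathrm{id}_X$ as a direct summand, since $e\eta'\colon eBe\to eBe'$ is a non-zero $\Bbbk$-linear map. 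It is this ``projective functors turn non-zero maps into maps with an identity summand'' mechanism, reached from $\overline{\mathbf{M}}(\mathrm{F})$ via exactness, that your sketch lacks and that cannot be replaced by the naturality bookkeeping you propose.
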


\begin{proof}
Let $X$ be a non-zero indecomposable object in some ${\mathbf{M}}(\mathtt{i})$ and $L$ be its simple
top in $\overline{\mathbf{M}}(\mathtt{i})$. Let $Y$ be a non-zero indecomposable object in some 
${\mathbf{M}}(\mathtt{j})$. By definition of strong simplicity, there is an indecomposable 
$1$-morphism $\mathrm{F}$ such that $\mathrm{F}\, L\cong Y$. This means that $Y$ is isomorphic to a direct
summand of $\mathrm{F}\, X$ and hence $\mathbf{M}$ is transitive. This proves claim~\eqref{prop16.1}.

Let $X,Y\in \overline{\mathbf{M}}(\mathtt{i})$ be two indecomposable projective objects and
$\eta:X\to Y$ be a non-zero morphism. Denote by $L\in \overline{\mathbf{M}}(\mathtt{i})$ the simple
top of $X$. Choose two $1$-morphisms $\mathrm{F}$ and $\mathrm{G}$  in $\cC$ such that $\mathrm{F}\, L\cong X$
and $\mathrm{G}\, L\cong Y$. Consider a finite dimensional $\Bbbk$-algebra $B$ such that 
$\overline{\mathbf{M}}(\mathtt{i})\cong B\text{-}\mathrm{mod}$. For simplicity, we identify
$\overline{\mathbf{M}}(\mathtt{i})$ and $B\text{-}\mathrm{mod}$. Let $e,e'$ be two primitive idempotents
of $B$ such that $X\cong Be$ and $Y\cong Be'$. Then, by Lemma~\ref{lem8}, the functor
$\overline{\mathbf{M}}(\mathrm{F})$ surjects onto the projective functor $Be\otimes_{\Bbbk} eB\otimes_B{}_-$.
Similarly, the functor $\overline{\mathbf{M}}(\mathrm{G})$ surjects onto the projective 
functor $Be'\otimes_{\Bbbk} eB\otimes_B{}_-$.

Now, for any non-zero map $\eta':Be\to Be'$ the induced map
\begin{displaymath}
\mathrm{Id}_{Be}\otimes \mathrm{Id}_{eB}\otimes \eta': 
Be\otimes_{\Bbbk} eB\otimes_BBe\to Be\otimes_{\Bbbk} eB\otimes_BBe' 
\end{displaymath}
contains, as a direct summand, the identity map on $Be$. This implies that the ideal $\mathbf{I}$ in
$\mathbf{M}$ generated by $\eta$ contains the identity morphism on $X$. Therefore $\mathbf{M}$ is
simple transitive.
\end{proof}

\begin{example}\label{ex17}
{\rm 
The claim of Proposition~\ref{prop16}\eqref{prop16.2} fails for general finitary $2$-representations.
Consider the algebra $D=\Bbbk[x]/(x^2)$ of dual numbers. Let $\mathcal{A}$ be a small category 
equivalent to $D\text{-}\mathrm{mod}$ and $\cC$ the finitary category with one object $\clubsuit$ which we
identify with $\mathcal{A}$, with indecomposable $1$-morphisms being endofunctors of $\mathcal{A}$
isomorphic to either the identity functor or tensoring with the $D\text{-}D$-bimodule $D\otimes_{\Bbbk}\Bbbk$, and
$2$-morphisms being natural transformations of functors. Then the defining $2$-representation of $\cC$,
i.e. the natural $2$-action of $\cC$ on $\mathcal{A}$, is clearly strongly simple. However, as
tensoring with $D\otimes_{\Bbbk}\Bbbk$ annihilates the non-zero nilpotent endomorphism of ${}_D D$,
this $2$-representation is not simple transitive.
}
\end{example}

Note also that the example of a transitive $2$-representation considered in Subsection~\ref{s2.4} is, 
clearly, simple transitive but not strongly simple.

\section{Weak Jordan-H{\"o}lder theory}\label{s3}

In this section, $\cC$ will be a finitary $2$-category. 

\subsection{The action preorder}\label{s3.1}

Let $\mathbf{M}$ be a finitary $2$-representation of $\cC$. Consider the (finite) set $\mathrm{Ind}(\mathbf{M})$ of 
isomorphism classes of indecomposable objects in all $\mathbf{M}(\mathtt{i})$ where $\mathtt{i}\in\cC$. For
$X,Y\in \mathrm{Ind}(\mathbf{M})$ set $X\geq Y$ provided that there is a $1$-morphisms $\mathrm{F}$ in $\cC$
such that $X$ is isomorphic to a direct summand of $\mathrm{F}\, Y$. Clearly, $\geq$ is a partial preorder on
$\mathrm{Ind}(\mathbf{M})$ which we will call the {\em action preorder}. 

Let $\sim$ be the equivalence relation defined by $X\sim Y$ if and only if $X\geq Y$ and $Y\geq X$. 
Note that $\mathbf{M}$ is transitive  if and only if we have exactly one equivalence class, namely the whole of
$\mathrm{Ind}(\mathbf{M})$.  The preorder $\geq$ induces a genuine partial order on the set
$\mathrm{Ind}(\mathbf{M})/_{\sim}$ which, abusing notation, we will denote by the same symbol.

\subsection{$2$-subrepresentations and subquotients associated to coideals}\label{s3.2}

Let $Q$ be a coideal in $\mathrm{Ind}(\mathbf{M})/_{\sim}$. For $\mathtt{i}\in\cC$ consider the additive
closure $\mathbf{M}_Q(\mathtt{i})$ in $\mathbf{M}(\mathtt{i})$ of all indecomposable objects 
$X\in \mathbf{M}(\mathtt{i})$ whose equivalence class belongs to $Q$. Then $\mathbf{M}_Q$ has the natural
structure of a $2$-representations of $\cC$ given by restriction from $\mathbf{M}$. This is the
$2$-subrepresentation of $\mathbf{M}$ associated to $Q$.

Suppose we are given a pair $Q,R$ of coideals in $\mathrm{Ind}(\mathbf{M})/_{\sim}$ such that $Q\subset R$.
For $\mathtt{i}\in\cC$ let $\mathbf{I}(\mathtt{i})$ denote the ideal in $\mathbf{M}_{R}(\mathtt{i})$
generated by the identities on the objects in $\mathbf{M}_{Q}(\mathtt{i})$. Then we can form the
quotient category $\mathbf{M}_{R/Q}(\mathtt{i}):=\mathbf{M}_{R}(\mathtt{i})/\mathbf{I}(\mathtt{i})$
and the $2$-functor $\mathbf{M}_{R}$ induces the $2$-functor $\mathbf{M}_{R/Q}$ which sends
$\mathtt{i}$ to $\mathbf{M}_{R/Q}(\mathtt{i})$. This is the $2$-subquotient of $\mathbf{M}$ associated to $Q\subset R$.
Note that $|R\setminus Q|=1$ implies that the $2$-representation $\mathbf{M}_{R/Q}$ is transitive.

For $r\in \mathrm{Ind}(\mathbf{M})/_{\sim}$ let $X_r$ be the maximal coideal in $\mathrm{Ind}(\mathbf{M})/_{\sim}$
which does not contain $r$. Then $r$ becomes the minimum element in 
$\big(\mathrm{Ind}(\mathbf{M})/_{\sim}\big)\setminus X_r$
with  respect to the induced order. Let $Y_r:=X_r\cup\{r\}$. Then $Y_r$ is a coideal in
$\mathrm{Ind}(\mathbf{M})/_{\sim}$. Therefore we have the associated 
quotient $\mathbf{M}_{Y_r/X_r}$ and we set $\underline{\mathbf{M}}_r:=\underline{\mathbf{M}}_{Y_r/X_r}$.

\subsection{Weak Jordan-H{\"o}lder series}\label{s3.3}

Consider a filtration
\begin{displaymath}
\mathcal{Q}:\qquad
\varnothing=Q_0\subsetneq Q_1\subsetneq Q_2\subsetneq\dots\subsetneq Q_k=  \mathrm{Ind}(\mathbf{M})/_{\sim}
\end{displaymath}
of coideals  such that $|Q_i\setminus Q_{i-1}|=1$ for all $i$. Such a filtration will be called
a {\em complete filtration}. With such a filtration we associate a
filtration of $2$-subrepresentations
\begin{equation}\label{eq1}
0\subset \mathbf{M}_{Q_1}\subset \mathbf{M}_{Q_2}\subset \dots\subset \mathbf{M}_{Q_k}=\mathbf{M}
\end{equation}
and the corresponding sequence
\begin{equation}\label{eq2}
\underline{\mathbf{M}}_{Q_1}, \underline{\mathbf{M}}_{Q_2/Q_1},
\underline{\mathbf{M}}_{Q_3/Q_2},\dots, \underline{\mathbf{M}}_{Q_k/Q_{k-1}} 
\end{equation}
of {\em simple transitive subquotients}. The filtration \eqref{eq1} is called a {\em weak Jordan-H{\"o}lder series}
of $\mathbf{M}$ and the elements in \eqref{eq2} are also called {\em weak composition subquotients}.

\subsection{Weak Jordan-H{\"o}lder theorem}\label{s3.4}

The main result of this section is the following weak version of the classical Jordan-H{\"o}lder theorem.

\begin{theorem}\label{thm4}
Let $\cC$ be a finitary $2$-category and $\mathbf{M}$ a finitary $2$-representation of $\cC$.
Let further $\mathcal{Q}$ and $\mathcal{R}$ be two complete filtrations of $\mathrm{Ind}(\mathbf{M})/_{\sim}$.
Let $\mathbf{L}_1,\mathbf{L}_2,\dots,\mathbf{L}_k$ be the sequence of simple transitive subquotients associated
to $\mathcal{Q}$ and $\mathbf{L}'_1,\mathbf{L}'_2,\dots,\mathbf{L}'_l$ be the sequence of simple transitive 
subquotients associated to $\mathcal{R}$. Then $k=l$ and there is a bijection $\sigma:\{1,2,\dots,k\}\to
\{1,2,\dots,k\}$ such that $\mathbf{L}_i$ and $\mathbf{L}'_{\sigma(i)}$ are equivalent for all $i\in \{1,2,\dots,k\}$.
\end{theorem}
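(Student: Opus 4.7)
The plan is to exploit the standard fact that any two linear extensions of a finite poset are connected by a sequence of transpositions of adjacent incomparable elements, thereby reducing the theorem to verifying a single elementary swap of the filtration.

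A complete filtration $\varnothing = Q_0 \subsetneq Q_1 \subsetneq \cdots \subsetneq Q_k$ of coideals, with $r_i$ the unique element of $Q_i \setminus Q_{i-1}$, amounts to an enumeration $r_1, \ldots, r_k$ of $\mathrm{Ind}(\mathbf{M})/_{\sim}$ in which each $r_i$ is $\geq$-minimal in $\{r_i, \ldots, r_k\}$; equivalently, $r_k, \ldots, r_1$ is a linear extension of the partial order on $\mathrm{Ind}(\mathbf{M})/_{\sim}$. Since linear extensions of a finite poset are related by elementary swaps of incomparable neighbours, it suffices to compare $\mathcal{Q}$ with the filtration $\mathcal{R}$ obtained from $\mathcal{Q}$ by replacing $Q_i$ with $Q_i' := Q_{i-1} \cup \{r_{i+1}\}$, where $r_i$ and $r_{i+1}$ are incomparable (it is precisely incomparability that ensures $Q_i'$ is again a coideal), and to show that the two resulting subquotient sequences differ only in the swap of positions $i$ and $i+1$.

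The categorical core is to construct the equivalence $\underline{\mathbf{M}}_{Q_i/Q_{i-1}} \simeq \underline{\mathbf{M}}_{Q_{i+1}/Q_i'}$, the other required equivalence being symmetric. Both sides are transitive $2$-representations whose nonzero indecomposables are precisely the $r_i$-indecomposables, and the identity on these objects together with the evident quotient on morphisms gives a surjective $2$-natural transformation $\Phi: \mathbf{M}_{Q_i/Q_{i-1}} \to \mathbf{M}_{Q_{i+1}/Q_i'}$ that imposes exactly the additional relation of killing morphisms factoring through $r_{i+1}$-indecomposables. The key claim is that $\ker\Phi$ contains no nonzero identity morphism: if $\mathrm{id}_X$ for an $r_i$-indecomposable $X$ factored through an object built from $Q_{i-1}$-indecomposables and $r_{i+1}$-indecomposables, idempotent splitting would force $X$ to be isomorphic to a summand of such an object, contradicting either $r_i \notin Q_{i-1}$ or the incomparability of $r_i$ and $r_{i+1}$. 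By Lemma~\ref{lem3}, $\ker\Phi$ therefore lies in the maximal ideal of $\mathbf{M}_{Q_i/Q_{i-1}}$, so $\Phi$ descends to a surjection $\underline{\mathbf{M}}_{Q_i/Q_{i-1}} \to \underline{\mathbf{M}}_{Q_{i+1}/Q_i'}$; the kernel of this descended map is an ideal in a simple transitive $2$-representation containing no nonzero identity (as the target is transitive and nonzero), hence is zero, and the map is an equivalence.

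The main technical obstacle I expect is the careful bookkeeping of morphism identifications inside the iterated quotients $\mathbf{M}_R/\mathbf{I}$ and verifying that $\Phi$ is natural with respect to the $\cC$-action; the heart of the argument -- non-vanishing of identities under $\Phi$ -- is essentially forced by incomparability via idempotent splitting, while the combinatorial reduction to swapping adjacent incomparable elements is standard.
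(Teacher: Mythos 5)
Your proof is correct in substance, but it is organized differently from the paper's. The paper never reduces to adjacent swaps: for every class $r\in\mathrm{Ind}(\mathbf{M})/_{\sim}$ it takes the maximal coideal $X_r$ not containing $r$, sets $Y_r=X_r\cup\{r\}$, and shows that for any complete filtration with $r=Q_i\setminus Q_{i-1}$ the inclusions $Q_{i-1}\subseteq X_r$ and $Q_i\subseteq Y_r$ induce a morphism $\mathbf{M}_{Q_i/Q_{i-1}}\to \mathbf{M}_{Y_r/X_r}$, surjective on morphisms, whence both have the same simple transitive quotient $\underline{\mathbf{M}}_r$; the bijection $\sigma$ is then just ``match the class $r$''. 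You instead invoke the (standard, but extra) fact that linear extensions of a finite poset are connected by transpositions of adjacent incomparable entries, and treat one swap $Q_i\mapsto Q_i'=Q_{i-1}\cup\{r_{i+1}\}$ by the entirely analogous comparison $\mathbf{M}_{Q_i/Q_{i-1}}\to\mathbf{M}_{Q_{i+1}/Q_i'}$. The categorical heart is the same in both arguments: a full and dense morphism between two transitive $2$-representations whose nonzero indecomposables are exactly those of class $r_i$, the Krull--Schmidt/idempotent-splitting observation that no identity of an $r_i$-indecomposable can factor through objects indexed by the smaller coideal, and Lemma~\ref{lem3}. The paper's variant avoids any induction over swaps, produces a canonical $\sigma$, and exhibits $\underline{\mathbf{M}}_r$ as an invariant depending only on $\mathbf{M}$ and $r$; yours keeps each comparison local to two neighbouring coideals and follows the classical Jordan--H{\"o}lder template, at the price of the poset-connectivity lemma and a chain of equivalences.

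Two points to tighten. First, since coideals are upward closed, each $r_i$ is $\geq$-maximal, not minimal, in $\{r_i,\dots,r_k\}$; your reformulation that $r_k,\dots,r_1$ is a linear extension is the correct one and is the one you actually use. Second, the descent is stated backwards: knowing $\ker\Phi\subseteq\mathbf{I}$, where $\mathbf{I}$ is the maximal ideal of $\mathbf{M}_{Q_i/Q_{i-1}}$ from Lemma~\ref{lem3}, does not by itself let you factor $\Phi$ through $\underline{\mathbf{M}}_{Q_i/Q_{i-1}}$; for that you would need the image of $\mathbf{I}$ to die in the target. The standard repair: let $\mathbf{K}$ be the kernel of the composite $\mathbf{M}_{Q_i/Q_{i-1}}\to\mathbf{M}_{Q_{i+1}/Q_i'}\to\underline{\mathbf{M}}_{Q_{i+1}/Q_i'}$. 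By your key claim, $\mathbf{K}$ contains no nonzero identities, so $\mathbf{K}\subseteq\mathbf{I}$; the induced functor from $\mathbf{M}_{Q_i/Q_{i-1}}/\mathbf{K}$ to $\underline{\mathbf{M}}_{Q_{i+1}/Q_i'}$ is full, faithful and dense, hence an equivalence, so $\mathbf{M}_{Q_i/Q_{i-1}}/\mathbf{K}$ is simple transitive; consequently the image of $\mathbf{I}$ in it, being a $\cC$-stable ideal without nonzero identities, vanishes, giving $\mathbf{K}=\mathbf{I}$ and the desired equivalence $\underline{\mathbf{M}}_{Q_i/Q_{i-1}}\simeq\underline{\mathbf{M}}_{Q_{i+1}/Q_i'}$. (The paper's own proof is equally brief at the corresponding step, and the same argument completes it there.)
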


\begin{proof}
Note first that we have $k=l=|\mathrm{Ind}(\mathbf{M})/_{\sim}|$ by definition. Let 
$r\in \mathrm{Ind}(\mathbf{M})/_{\sim}$. Then there are unique $i,j\in \{1,2,\dots,k\}$ such that 
$r=Q_i\setminus Q_{i-1}$ and $r=R_j\setminus R_{j-1}$. To prove the assertion it is enough to show that the
$2$-representations $\underline{\mathbf{M}}_r$, $\mathbf{L}_i$ and $\mathbf{L}'_j$ are equivalent.
By symmetry, it is enough to show that $\underline{\mathbf{M}}_r$ and $\mathbf{L}_i$ are equivalent.

Let $\mathbf{I}$ be the ideal in $\mathbf{M}_{Y_r}$ used to define $\mathbf{M}_{Y_r/X_r}$.
Similarly, let $\mathbf{J}$ be the ideal in $\mathbf{M}_{Q_i}$ used to define $\mathbf{M}_{Q_i/Q_{i-1}}$.
By construction of $X_r$, we have $Q_{i-1}\subset X_r$ and hence also $Q_i\subset Y_r$. 
The inclusion $Q_i\subset Y_r$ induces a faithful  $2$-natural transformation 
from $\mathbf{M}_{Q_i}$ to $\mathbf{M}_{Y_r}$ which gives, by taking the quotient, a strong
transformation from $\mathbf{M}_{Q_i}$ to $\mathbf{M}_{Y_r/X_r}$. 
Since $Q_{i-1}\subset X_r$, for any indecomposable objects 
$M$ and $N$ whose $\sim$-classes belong to $r$, we have $\mathbf{J}(M,N)\subset \mathbf{I}(M,N)$.
Therefore the strong
transformation from $\mathbf{M}_{Q_i}$ to $\mathbf{M}_{Y_r/X_r}$ factors through
$\mathbf{M}_{Q_i/Q_{i-1}}$. This gives a $2$-natural transformation from 
$\mathbf{M}_{Q_i/Q_{i-1}}$ to $\mathbf{M}_{Y_r/X_r}$ which is surjective on morphisms. 
Note that both $2$-representations  $\mathbf{M}_{Q_i/Q_{i-1}}$ and  $\mathbf{M}_{Y_r/X_r}$
are transitive. Taking now the quotient by the unique maximal ideal given by Lemma~\ref{lem3} induces
an equivalence between the corresponding simple transitive quotients, that is between
$\mathbf{L}_i$ and $\underline{\mathbf{M}}_r$. The claim follows.
\end{proof}

\subsection{Example: weak composition subquotients for principal $2$-rep\-re\-sen\-ta\-ti\-ons}\label{s3.5}

Consider the principal $2$-representation $\mathbb{P}_{\mathtt{i}}$ for $\mathtt{i}\in\cC$. The 
action preorder $\geq$ for $\mathbb{P}_{\mathtt{i}}$ coincides with the restriction to $\mathbb{P}_{\mathtt{i}}$
of the preorder $\geq_L$. Therefore $\mathrm{Ind}(\mathbb{P}_{\mathtt{i}})$ coincides with the set of isomorphism
classes of $1$-morphisms in $\cC$ with domain $\mathtt{i}$. The set 
$\mathrm{Ind}(\mathbb{P}_{\mathtt{i}})/_{\sim}$ thus becomes the set of all left cells with domain $\mathtt{i}$.
Comparing Subsection~\ref{s2.3} with Subsection~\ref{s3.3}, we see that weak composition subquotients
of $\mathbb{P}_{\mathtt{i}}$ are exactly the cell $2$-representations for left cells with domain $\mathtt{i}$.

\section{Classification of transitive $2$-representations for $\cC_{A}$}\label{s4}

\subsection{The $2$-category $\cC_{A}$}\label{s4.1}

Let $A$ be a basic self-injective connected $\Bbbk$-algebra of finite dimension $m$. Fix a small category $\mathcal{A}$
equivalent to $A\text{-}\mathrm{mod}$. We assume that $\mathcal{A}$ is not semi-simple.
Define the $2$-category $\cC_{A}$ as follows (cf.  \cite[Subsection~7.3]{MM1}):
\begin{itemize}
\item $\cC_{A}$ has one object $\clubsuit$ (which we identify with $\mathcal{A}$);
\item $1$-morphisms in $\cC_{A}$ are direct sums of functors with summands isomorphic to the identity
functor or to tensoring with projective $A\text{-}A$-bimodules; 
\item $2$-morphisms in $\cC_{A}$ are natural transformations of functors.
\end{itemize}
Functors isomorphic to tensoring with projective $A\text{-}A$-bimodules will be called {\em projective functors}.

Fix some decomposition $1=e_1+e_2+\dots+e_n$ of the identity in $A$ into a sum of primitive orthogonal idempotents.
The $2$-category $\cC_{A}$ has a unique minimal two-sided cell consisting of  the isomorphism class of the identity
morphism. It has one other two-sided cell $\mathcal{J}$ consisting of the isomorphism classes
of functors $\mathrm{F}_{ij}$ given by tensoring with the indecomposable bimodules $Ae_i\otimes e_jA$, 
where $i,j\in\{1,2,\dots,n\}$. Left and right cells in $\mathcal{J}$ are 
\begin{displaymath}
\mathcal{L}_j:=\{\mathrm{F}_{ij}:i\in\{1,2,\dots,n\}\}\quad\text{ and }\quad
\mathcal{R}_i:=\{\mathrm{F}_{ij}:j\in\{1,2,\dots,n\}\},
\end{displaymath}
where $i,j\in\{1,2,\dots,n\}$. We have 
\begin{displaymath}
\mathrm{F}_{ij}\circ \mathrm{F}_{st}\cong  \mathrm{F}_{it}^{\oplus\dim(e_jAe_s)}.
\end{displaymath}
Let $\sigma:\{1,2,\dots,n\}\to \{1,2,\dots,n\}$ be the {\em Nakayama} bijection given by requiring
$\mathrm{soc}\,Ae_i\cong \mathrm{top}\,Ae_{\sigma(i)}$ which is equivalent to 
$Ae_i\cong \mathrm{Hom}_{\Bbbk}(e_{\sigma(i)}A,\Bbbk)$. Since
\begin{displaymath}
\mathrm{Hom}_{A}(Ae_i\otimes_{\Bbbk}e_{j}A,{}_-)\cong
\mathrm{Hom}_{\Bbbk}(e_{j}A,\Bbbk)\otimes_{\Bbbk} e_iA\otimes_A {}_-,
\end{displaymath}
see  e.g. \cite[Subsection~7.3]{MM1}, we have that $(\mathrm{F}_{ij},\mathrm{F}_{\sigma^{-1}(j)i})$ 
is an adjoint pair of functors. This implies that $\cC_{A}$ is weakly fiat with $*$ defined on $1$-morphisms by
$\mathrm{F}_{ij}^*=\mathrm{F}_{\sigma^{-1}(j)i}$.

We set $\displaystyle\mathrm{F}:=\bigoplus_{i,j=1}^n\mathrm{F}_{ij}$. Since $A$ is basic and
\begin{displaymath}
A\otimes_{\Bbbk}A\otimes_A A\otimes_{\Bbbk}A\cong A\otimes_{\Bbbk}A^{\oplus m}, 
\end{displaymath}
we have 
\begin{equation}\label{eq5}
\mathrm{F}\circ \mathrm{F}\cong \mathrm{F}^{\oplus m}. 
\end{equation}
Note that $\mathrm{F}^*\cong \mathrm{F}$.

The $2$-category $\cC_A$ is {\em $\mathcal{J}$-simple} in the sense that any nonzero two-sided
$2$-ideal in $\cC_A$ contains the identity $2$-morphisms on all indecomposable non-identity $1$-morphisms,
see \cite[Subsection~6.2]{MM2}.

Denote by $\mathcal{P}$ the full subcategory of $\mathcal{A}$ consisting of projective
objects. Then the defining action of $\cC_A$ on $\mathcal{A}$ restricts to $\mathcal{P}$.
We will denote the latter {\em defining additive} $2$-representation of $\cC_A$ by $\mathbf{D}$.

\begin{proposition}\label{prop11}
For any $j=1,\dots,n$ the $2$-representations $\mathbf{D}$ and $\mathbf{C}_{\mathcal{L}_j}$ are equivalent.
\end{proposition}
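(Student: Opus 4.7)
The plan is to build an explicit equivalence $\overline{\Phi}:\mathbf{C}_{\mathcal{L}_j}\to\mathbf{D}$ via evaluation at the simple top $L_j=\mathrm{top}(P_j)$ of $P_j=Ae_j$, sitting inside $\overline{\mathbf{D}}(\clubsuit)$ (which we identify with $A\text{-}\mathrm{mod}$). First, evaluation at $L_j$ defines a $2$-natural transformation $\Psi:\mathbb{P}_{\clubsuit}\to\overline{\mathbf{D}}$ sending a $1$-morphism $\mathrm{F}$ to $\mathrm{F}\,L_j$. The decisive computation is $e_jA\otimes_A L_j\cong\Bbbk$, using that $e_j$ acts as the identity on the one-dimensional top $L_j$ of $P_j$; this gives $\mathrm{F}_{ij}\,L_j\cong Ae_i\otimes_{\Bbbk}\Bbbk=P_i$. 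Consequently, the restriction of $\Psi$ to the $2$-subrepresentation $\mathbf{N}\subset\mathbb{P}_{\clubsuit}$ associated to $\mathcal{L}_j$ takes values in $\mathcal{P}=\mathbf{D}(\clubsuit)$, yielding a $2$-natural transformation $\Phi:\mathbf{N}\to\mathbf{D}$ with $\Phi(\mathrm{F}_{ij})=P_i$.

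Next I would analyze $\Phi$ on Hom spaces. Using the bimodule adjunction $\mathrm{Hom}_{A\text{-}A}(Ae_i\otimes_{\Bbbk}e_jA,M)\cong e_iMe_j$, one gets $\mathrm{Hom}_{\ccC_A}(\mathrm{F}_{ij},\mathrm{F}_{sj})\cong e_iAe_s\otimes_{\Bbbk}e_jAe_j$; under evaluation at $L_j$ the second tensor factor acts on the one-dimensional space $e_jL_j\cong\Bbbk$ via the augmentation character $\epsilon:e_jAe_j\to\Bbbk$, i.e. the canonical quotient by the radical of the local algebra $e_jAe_j$. Thus $\Phi$ on Hom spaces is the surjection $a\otimes b\mapsto\epsilon(b)\,a$ onto $e_iAe_s=\mathrm{Hom}_{\mathcal{P}}(P_i,P_s)$, making $\Phi$ full. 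Moreover $\Phi(\mathrm{id}_{\mathrm{F}_{ij}})=\mathrm{id}_{P_i}\neq 0$, so $\ker\Phi$ is an ideal in $\mathbf{N}$ containing no $\mathrm{id}_{\mathrm{F}_{ij}}$; by Lemma~\ref{lem2} it lies inside the ideal $\mathbf{I}$ defining $\mathbf{C}_{\mathcal{L}_j}$, and $\Phi$ factors through a $2$-natural transformation $\overline{\Phi}:\mathbf{C}_{\mathcal{L}_j}=\mathbf{N}/\mathbf{I}\to\mathbf{D}$.

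Finally, I would show $\overline{\Phi}$ is an equivalence. Essential surjectivity on indecomposables is immediate since $\overline{\Phi}([\mathrm{F}_{ij}])=P_i$ runs through all indecomposables of $\mathcal{P}$; fullness is inherited from $\Phi$. For faithfulness, recall from Subsection~\ref{s2.7} that $\mathbf{C}_{\mathcal{L}_j}$ is simple transitive, so its only $2$-ideal containing no identity morphism on an indecomposable is the zero ideal; since $\overline{\Phi}$ sends each indecomposable to a nonzero object (with identity going to identity), $\ker\overline{\Phi}$ cannot contain any such identity, hence is zero. The main technical step is the evaluation-at-$L_j$ computation, in particular identifying the action of $e_jAe_j$ on $L_j$ with its augmentation character; once that is in hand everything else is a formal assembly resting on the machinery already set up in Sections~\ref{s1} and~\ref{s2}.
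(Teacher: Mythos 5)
Your overall route is the same as the paper's (the paper's proof is exactly "map the generator of $\mathbb{P}_{\clubsuit}$ to the simple object corresponding to $j$ and check this induces an equivalence"), and your computations are correct: $\mathrm{F}_{ij}\,L_j\cong P_i$, the identification $\mathrm{Hom}(\mathrm{F}_{ij},\mathrm{F}_{sj})\cong e_iAe_s\otimes_{\Bbbk}e_jAe_j$, and the fact that evaluation at $L_j$ acts on Hom-spaces by $a\otimes b\mapsto \epsilon(b)a$, hence is full and essentially surjective. The gap is in the factorization step. From $\Phi(\mathrm{id}_{\mathrm{F}_{ij}})=\mathrm{id}_{P_i}\neq 0$ and Lemma~\ref{lem2} you correctly get $\ker\Phi\subseteq\mathbf{I}$, but this inclusion goes the wrong way for your conclusion: to factor $\Phi$ through $\mathbf{C}_{\mathcal{L}_j}=\mathbf{N}/\mathbf{I}$ you need $\mathbf{I}\subseteq\ker\Phi$, i.e.\ that evaluation at $L_j$ annihilates the maximal ideal $\mathbf{I}$, and nothing in your argument establishes this. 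Your later faithfulness paragraph cannot repair it, since it reasons about $\overline{\Phi}$, whose very existence is what is in question; in fact the inclusion $\mathbf{I}\subseteq\ker\Phi$ is the only non-formal content of the proposition, so omitting it is a genuine gap rather than a slip.

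The missing ingredient is a simplicity statement for the target: every nonzero ideal of $\mathbf{D}$ stable under the $2$-action contains $\mathrm{id}_{P_i}$ for every $i$. This is proved by the same device as in Proposition~\ref{prop16}\eqref{prop16.2}: if $f$ is a nonzero morphism between projectives, then $e_jf\neq 0$ for some $j$, and $\mathrm{F}_{ij}\,f=Ae_i\otimes_{\Bbbk}e_jf$ has, as a component between indecomposable summands, a nonzero scalar multiple of $\mathrm{id}_{Ae_i}$; hence the stable ideal generated by $f$ contains $\mathrm{id}_{P_i}$. Granting this, you can finish as follows: if $\mathbf{I}\not\subseteq\ker\Phi$, then (using fullness of $\Phi$) the image of $\mathbf{I}$ generates a nonzero $\cC_A$-stable ideal of $\mathbf{D}$, so $\mathrm{id}_{P_i}=\Phi(\beta)$ for some $\beta\in\mathbf{I}(\mathrm{F}_{ij},\mathrm{F}_{ij})$; then $\mathrm{id}_{\mathrm{F}_{ij}}-\beta\in\ker\Phi\subseteq\mathbf{I}$ forces $\mathrm{id}_{\mathrm{F}_{ij}}\in\mathbf{I}$, a contradiction. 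Thus $\ker\Phi=\mathbf{I}$, the induced $\overline{\Phi}$ is well defined, and it is full, faithful and essentially surjective, which is the desired equivalence (alternatively, the same simplicity argument shows directly that $\ker\Phi$, whose quotient is $\mathbf{D}$, is itself the unique maximal ideal of Lemma~\ref{lem2}). With this supplement your proof is complete and agrees with the intended argument of the paper.
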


\begin{proof}
It is easy to check that mapping the generator $P_{\mathbbm{1}_{\clubsuit}}$ of $\mathbb{P}_{\clubsuit}$
to the simple object in $\mathcal{A}$ corresponding to $j$ induces an equivalence 
from $\mathbf{C}_{\mathcal{L}_j}$ to $\mathbf{D}$.
\end{proof}

\subsection{Matrices in the Grothendieck group}\label{s4.2}

Let $\mathbf{M}$ be a finitary $2$-representation of $\cC_{A}$. For a $1$-morphism $\mathrm{G}$ denote
by $[\mathrm{G}]$ the square matrix with non-negative integer coefficients whose rows and columns are
indexed by isomorphism classes of indecomposable objects in $\mathbf{M}(\clubsuit)$ and the intersection
of the row indexed by $Y$ and the column indexed by $X$ contains the multiplicity of $Y$ as a direct
summand of $\mathrm{G}\, X$.

Consider the abelianization $\overline{\mathbf{M}}$ of $\mathbf{M}$. Then the isomorphism classes of simple
objects in $\overline{\mathbf{M}}(\clubsuit)$ are in bijection with isomorphism classes of indecomposable 
objects in $\mathbf{M}(\clubsuit)$. For a $1$-morphism $\mathrm{G}$ denote
by $\llbracket\mathrm{G}\rrbracket$ the square matrix with non-negative integer coefficients whose rows and columns are
indexed by isomorphism classes of simple objects in $\overline{\mathbf{M}}(\clubsuit)$ 
and the intersection of the row indexed by $Y$ and the column indexed by $X$ contains the composition
multiplicity of $Y$ in $\mathrm{G}\, X$. The following generalizes \cite[Lemma~8]{AM}.

\begin{lemma}\label{lem6}
We have $\llbracket\mathrm{G}^*\rrbracket=[\mathrm{G}]^t$, where ${{}_-}^t$ 
denotes the transpose of a matrix. 
\end{lemma}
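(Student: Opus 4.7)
The plan is to read off both sides as Hom-space dimensions and then apply biadjunction. Since $\cC_A$ is weakly fiat via the involution $*$, the unit $\beta$ and counit $\alpha$ from Subsection~\ref{s1.5} give, for every $1$-morphism $\mathrm{G}$, an adjunction $(\mathrm{G},\mathrm{G}^*)$. This adjunction is preserved by the strict $2$-functor $\overline{\mathbf{M}}$, so $\mathrm{G}^*$ is right adjoint to $\mathrm{G}$ as endofunctors of $\overline{\mathbf{M}}(\clubsuit)$.

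First I would rewrite the $(Y,X)$-entry of $[\mathrm{G}]$ as a Hom-dimension. Under the standard identification of $\mathbf{M}(\clubsuit)$ with the full subcategory of projectives in $\overline{\mathbf{M}}(\clubsuit)$, an indecomposable object $X$ of $\mathbf{M}(\clubsuit)$ becomes an indecomposable projective $P_X$ in $\overline{\mathbf{M}}(\clubsuit)$ with simple top $L_X$; moreover $\mathrm{G}$ acting on the abelianization sends $P_X$ to the projective object $\mathrm{G}\,P_X$. The multiplicity of $P_Y$ as an indecomposable summand of the projective $\mathrm{G}\,P_X$ equals $\dim\mathrm{Hom}(\mathrm{G}\,P_X,L_Y)$, hence
\begin{displaymath}
[\mathrm{G}]_{Y,X}=\dim\mathrm{Hom}_{\overline{\mathbf{M}}(\clubsuit)}(\mathrm{G}\,P_X,L_Y).
\end{displaymath}

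Next, the $(Y,X)$-entry of $\llbracket\mathrm{G}^*\rrbracket$ is the composition multiplicity $[\mathrm{G}^*L_X:L_Y]$, which equals $\dim\mathrm{Hom}(P_Y,\mathrm{G}^*L_X)$ since $P_Y$ is the projective cover of $L_Y$. Applying the adjunction $(\mathrm{G},\mathrm{G}^*)$ gives
\begin{displaymath}
\dim\mathrm{Hom}(P_Y,\mathrm{G}^*L_X)=\dim\mathrm{Hom}(\mathrm{G}\,P_Y,L_X)=[\mathrm{G}]_{X,Y}=([\mathrm{G}]^t)_{Y,X},
\end{displaymath}
which is the claim. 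The only point requiring care — and the main (mild) obstacle — is verifying that the adjunction data $(\alpha,\beta)$ from $\cC_A$ really induces an honest adjunction between the endofunctors $\overline{\mathbf{M}}(\mathrm{G})$ and $\overline{\mathbf{M}}(\mathrm{G}^*)$ of the \emph{abelian} category $\overline{\mathbf{M}}(\clubsuit)$; this follows because $\overline{\mathbf{M}}$ is a strict $2$-functor so the triangle identities transport verbatim, giving the required natural isomorphism on Hom-spaces used above.
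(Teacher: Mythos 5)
Your proof is correct and is essentially the paper's own argument: both rest on the adjunction isomorphism $\mathrm{Hom}_{\overline{\mathbf{M}}(\clubsuit)}(\mathrm{G}\,P,L)\cong\mathrm{Hom}_{\overline{\mathbf{M}}(\clubsuit)}(P,\mathrm{G}^*\,L)$ together with the identification of $\mathbf{M}(\clubsuit)$ with the projectives in $\overline{\mathbf{M}}(\clubsuit)$, which you merely spell out entrywise. One cosmetic remark: for $\cC_A$ the map $*$ is in general not an involution (only a weak equivalence, since $\mathrm{F}_{ij}^*=\mathrm{F}_{\sigma^{-1}(j)i}$), but your argument only uses the adjoint pair $(\mathrm{G},\mathrm{G}^*)$, so nothing is affected.
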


\begin{proof}
For a projective $P$ and a simple $L$ in  $\overline{\mathbf{M}}(\clubsuit)$ we have
\begin{displaymath}
\mathrm{Hom}_{\overline{\mathbf{M}}(\clubsuit)}(\mathrm{G}\,P,L)\cong  
\mathrm{Hom}_{\overline{\mathbf{M}}(\clubsuit)}(P,\mathrm{G}^*\,L).
\end{displaymath}
The inclusion of $\mathbf{M}(\clubsuit)$ to $\overline{\mathbf{M}}(\clubsuit)$ given by $X\mapsto (0\to X)$
is an equivalence between $\mathbf{M}(\clubsuit)$ and the category of projective objects in 
$\overline{\mathbf{M}}(\clubsuit)$. This implies the claim.
\end{proof}

\begin{lemma}\label{lem5}
Consider the functor $\mathrm{F}$ from Subsection~\ref{s4.1}.
\begin{enumerate}[$($i$)$]
\item\label{lem5.1} The matrix $[\mathrm{F}]$ satisfies $[\mathrm{F}]^2=m[\mathrm{F}]$.
\item\label{lem5.2} If $\mathbf{M}$ is transitive, then all entries in $[\mathrm{F}]$ are positive.
\item\label{lem5.3} If $\mathbf{M}$ is transitive, then the rank of $[\mathrm{F}]$ equals one.
\end{enumerate}
\end{lemma}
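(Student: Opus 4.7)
The plan is to handle the three parts in order, with (iii) combining the previous two with Perron--Frobenius.

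For (i), I would first note that on a Krull--Schmidt additive category, decomposing $\mathrm{H}\,X$ into indecomposables and then applying an additive functor $\mathrm{G}$ shows that $[\cdot]$ turns composition of $1$-morphisms into matrix multiplication, that is, $[\mathrm{G}\circ \mathrm{H}]=[\mathrm{G}]\,[\mathrm{H}]$. Combining this with $[\mathrm{F}^{\oplus m}]=m[\mathrm{F}]$ and the decomposition $\mathrm{F}\circ \mathrm{F}\cong \mathrm{F}^{\oplus m}$ from \eqref{eq5} immediately yields $[\mathrm{F}]^2=m[\mathrm{F}]$.

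For (ii), fix indecomposable $X,Y\in\mathbf{M}(\clubsuit)$. By transitivity there is a $1$-morphism $\mathrm{G}$ such that $Y$ is a summand of $\mathrm{G}\, X$. Decomposing $\mathrm{G}$ into indecomposable summands (each being either $\mathbbm{1}_{\clubsuit}$ or some $\mathrm{F}_{ij}$) and recalling that each $\mathrm{F}_{ij}\, X$ is itself a summand of $\mathrm{F}\, X$, one sees that every indecomposable summand of $\mathrm{G}\, X$ is either isomorphic to $X$ or is a summand of $\mathrm{F}\, X$. Hence if $Y\not\cong X$ then $[\mathrm{F}]_{Y,X}>0$ at once. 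If $Y\cong X$ and there exists some indecomposable $Z\not\cong X$, the previous case gives $Z$ as a summand of $\mathrm{F}\, X$; using (i), $\mathrm{F}\, Z$ is a summand of $\mathrm{F}\circ\mathrm{F}\, X\cong m\,\mathrm{F}\, X$, and applying the first case again to the pair $(X,Z)$ puts $X$ as a summand of $\mathrm{F}\, Z$ and therefore of $\mathrm{F}\, X$. The single-indecomposable case reduces via (i) to the scalar equation $k^2=mk$, forcing $k=m$ once one excludes the degenerate situation in which $\mathrm{F}$ acts as zero.

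For (iii), I would combine (i) and (ii). By (ii) the matrix $[\mathrm{F}]$ has strictly positive entries, so Theorem~\ref{thm1} yields a simple dominant real eigenvalue $r>0$; by (i) every eigenvalue $\lambda$ of $[\mathrm{F}]$ satisfies $\lambda(\lambda-m)=0$, which forces $r=m$. Since the minimal polynomial of $[\mathrm{F}]$ divides $x(x-m)$ and therefore has distinct roots, $[\mathrm{F}]$ is diagonalizable, and with the only nonzero eigenvalue $m$ occurring with algebraic multiplicity one we conclude that $\mathrm{rank}\,[\mathrm{F}]=1$.

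The main obstacle is the sub-case $Y\cong X$ in (ii): showing that $X$ is itself a summand of $\mathrm{F}\, X$ is not delivered directly by transitivity and requires the detour through a second indecomposable $Z$ together with the composition identity from (i). Once (ii) is secured, (iii) is essentially a bookkeeping use of the Perron--Frobenius theorem.
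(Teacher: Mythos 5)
Your proof is correct and follows essentially the same route as the paper: (i) from $\mathrm{F}\circ\mathrm{F}\cong\mathrm{F}^{\oplus m}$ together with multiplicativity of $[{}_-]$ under composition, (ii) from transitivity, and (iii) by combining the diagonalizability forced by (i) with the multiplicity-one part of Theorem~\ref{thm1}, exactly as in the paper. The extra care you take in (ii) is a legitimate filling-in of what the paper calls ``immediate''; note only that your exclusion of the case where $\mathrm{F}$ acts by zero is genuinely an assumption and not a consequence of transitivity (the cell $2$-representation attached to the left cell $\{\mathbbm{1}_{\clubsuit}\}$ is transitive with $[\mathrm{F}]=0$ and a single indecomposable object), which matches the hypothesis the paper leaves implicit here and makes explicit elsewhere, e.g.\ the reduction to $\mathbf{M}(\mathrm{F})\neq 0$ in Proposition~\ref{prop18}.
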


\begin{proof}
Claim~\eqref{lem5.1} follows from \eqref{eq5}. Claim~\eqref{lem5.2} is immediate from the definition of transitivity.

Claim~\eqref{lem5.1} implies that $[\mathrm{F}]$ is diagonalizable with eigenvalues $0$ and $m$.
By Theorem~\ref{thm1}\eqref{thm1.2}, the eigenvalue $m$ has multiplicity one. Claim~\eqref{lem5.3} follows.
\end{proof}

\subsection{Auxiliary lemmata}\label{s4.3}

\begin{lemma}\label{lem7}
Let $\mathbf{M}$ be a simple transitive $2$-representation of $\cC_{A}$. Then for any
$X\in \overline{\mathbf{M}}(\clubsuit)$ the object $\mathrm{F}\, X$ is projective in 
$\overline{\mathbf{M}}(\clubsuit)$.
\end{lemma}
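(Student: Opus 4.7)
The plan proceeds in two stages: a reduction from arbitrary $X$ to the case of simple $X$, and a bimodule analysis of the simple case that exploits the rank-one structure from Lemma~\ref{lem5}. Since $\cC_A$ is weakly fiat, the $2$-representation $\mathbf{M}$ is exact (cf.\ Subsection~\ref{s1.7}), so $\mathrm{F}$ acts as an exact endofunctor of $\overline{\mathbf{M}}(\clubsuit)$. Moreover, $\mathrm{F}^*\cong \mathrm{F}$ is an exact right adjoint of $\mathrm{F}$, which implies that $\mathrm{F}$ preserves projective objects, and this handles the case $X\in\mathbf{M}(\clubsuit)$.

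For a general $X$, I would induct on the composition length of $X$ in $\overline{\mathbf{M}}(\clubsuit)$. Given a short exact sequence $0\to X'\to X\to L\to 0$ with $L$ simple, exactness of $\mathrm{F}$ yields $0\to \mathrm{F}\,X'\to \mathrm{F}\,X\to \mathrm{F}\,L\to 0$. If $\mathrm{F}\,L$ is projective, this sequence splits, and the inductive hypothesis applied to $X'$ (of strictly smaller length) gives that $\mathrm{F}\,X\cong \mathrm{F}\,X'\oplus \mathrm{F}\,L$ is projective. Thus the entire statement reduces to the claim that $\mathrm{F}\,L$ is projective for every simple $L\in\overline{\mathbf{M}}(\clubsuit)$.

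To handle this base case, identify $\overline{\mathbf{M}}(\clubsuit)$ with $B\text{-}\mathrm{mod}$ for a suitable finite-dimensional $\Bbbk$-algebra $B$ and write $\mathrm{F}\cong M\otimes_B{}_-$ for the bimodule $M:=\mathrm{F}(B)$. Combining Lemma~\ref{lem5}\eqref{lem5.2} with \eqref{lem5.3}, the rank-one positive matrix $[\mathrm{F}]$ factors as $[\mathrm{F}]_{l,k}=c_ld_k$ for positive integers $c_l,d_k$, whence $Me_k=\mathrm{F}(Be_k)\cong Q^{\oplus d_k}$ as left $B$-modules, where $Q:=\bigoplus_l(Be_l)^{\oplus c_l}$ is a fixed projective. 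The goal is to upgrade this left-module statement to a bimodule decomposition $M\cong Q\otimes_{\Bbbk}V$ for some right $B$-module $V$; granted this, $\mathrm{F}\,L=M\otimes_BL\cong Q\otimes_{\Bbbk}(V\otimes_BL)$ is a direct sum of copies of $Q$, and therefore projective, which finishes the proof.

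The hardest step is the bimodule decomposition: while the isotypic decomposition of $M$ as a left $B$-module is immediate from the rank-one property, one must show that the right $B$-action does not mix distinct left-isotypic components. I expect this to follow by combining the bimodule identity $M\otimes_BM\cong M^{\oplus m}$ (translating $\mathrm{F}\circ\mathrm{F}\cong\mathrm{F}^{\oplus m}$) with the finer multiplication laws $\mathrm{F}_{ij}\circ\mathrm{F}_{st}\cong\mathrm{F}_{it}^{\oplus\dim(e_jAe_s)}$ applied to the individual summands $M_{ij}$ of $M$, the point being that each such $M_{ij}$ must itself be of product form with projective left tensor factor, forcing the corresponding decomposition of $M$.
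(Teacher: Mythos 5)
The first two stages of your plan are fine: since $\cC_A$ is weakly fiat, $\overline{\mathbf{M}}(\mathrm{F})$ is exact with exact (bi)adjoint, hence preserves projectives, and the induction on composition length correctly reduces the lemma to showing that $\mathrm{F}\,L$ is projective for every simple $L$. The gap is exactly where you locate the ``hardest step'', and it is not a technical gap but a conceptual one: every ingredient you propose to use there -- positivity and rank one of $[\mathrm{F}]$, the left-module isomorphisms $Me_k\cong Q^{\oplus d_k}$, the relation $M\otimes_BM\cong M^{\oplus m}$ and the finer relations $\mathrm{F}_{ij}\circ\mathrm{F}_{st}\cong\mathrm{F}_{it}^{\oplus\dim(e_jAe_s)}$ -- holds for merely \emph{transitive} $2$-representations, whereas the conclusion of Lemma~\ref{lem7} fails for those. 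Concretely, let $A=D=\Bbbk[x]/(x^2)$ and let $\mathbf{N}$ be the transitive $2$-representation of $\cC_D$ given by the additive closure of $\mathrm{F}=\mathrm{F}_{11}$ inside $\cC_D(\clubsuit,\clubsuit)$, i.e.\ the $2$-representation of Subsection~\ref{s2.3} before passing to the quotient by $\mathbf{I}$. Then $\overline{\mathbf{N}}(\clubsuit)\simeq E\text{-}\mathrm{mod}$ with $E\cong D\otimes_{\Bbbk}D^{\mathrm{op}}$ local of dimension four, $[\mathrm{F}]=(2)$ is positive of rank one, the bimodule $M$ representing $\overline{\mathbf{N}}(\mathrm{F})$ is isomorphic to $E^{\oplus 2}$ as a left $E$-module and satisfies $M\otimes_EM\cong M^{\oplus 2}$; nevertheless for the simple $L$ one computes $\mathrm{F}\,L\cong D$, with $E$ acting through multiplication in the first tensor factor and the augmentation of the second, which is two-dimensional and not projective. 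In particular $M\not\cong Q\otimes_{\Bbbk}V$, so the bimodule decomposition you hope to extract from the composition relations simply does not follow from them; any correct argument must invoke simple transitivity (the absence of proper nonzero $\cC_A$-stable ideals), which your outline never uses.

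For comparison, this is precisely how the paper's proof closes the gap: one applies $\mathrm{F}$ to a \emph{minimal} projective presentation $P_1\xrightarrow{\alpha}P_0$ of $\mathrm{F}\,X$, uses the rank-one/positivity (Perron--Frobenius eigenvector) argument in the split Grothendieck group to show that $\mathrm{F}\,P_1\to\mathrm{F}\,P_0$ is again minimal, so that $\mathrm{F}(\alpha)$ is a radical morphism between objects of $\mathbf{M}(\clubsuit)$, and then observes that the $\cC_A$-stable ideal generated by $\mathrm{F}(\alpha)$ lies in the radical and hence must vanish by simple transitivity; this forces $\alpha=0$ and projectivity of $\mathrm{F}\,X$. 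Your route also inverts the paper's logical order: the statement that $\overline{\mathbf{M}}(\mathrm{F})$ is given by tensoring with a bimodule of the form ``projective left module $\otimes_{\Bbbk}$ something'' is essentially the conclusion that $\overline{\mathbf{M}}(\mathrm{F})$ is a projective functor, which in the paper is only obtained \emph{after} Lemma~\ref{lem7}, via Lemma~\ref{lem8}, with Lemma~\ref{lem7} as input. So as written the proposal has a genuine gap at its central step, and the suggested way of filling it cannot work.
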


\begin{proof}
Applying $\mathrm{F}$  to a minimal projective presentation $P_1\overset{\alpha}{\longrightarrow} P_0$ 
of $\mathrm{F}\, X$ we get a projective  presentation 
$\mathrm{F}\, P_1\overset{\mathrm{F}(\alpha)}{\longrightarrow}  \mathrm{F}\, P_0$ of
$\mathrm{F}^2\, X\cong (\mathrm{F}\, X)^{\oplus m}$.

Consider the split Grothendieck group of the category $\mathcal{W}$ 
of projective objects in $\overline{\mathbf{M}}(\clubsuit)$.
For $i=0,1$ let $v_i$ be the vector recording the multiplicities of indecomposable projective objects in
$\mathrm{F}\, P_i$. Then, by minimality of the presentation $P_1\overset{\alpha}{\longrightarrow} P_0$, we have
\begin{equation}\label{eq3}
[\mathrm{F}]\cdot v_i=mv_i+w_i 
\end{equation}
for some non-negative vectors $w_i$. Note that $mv_i+w_i$ is a nonzero vector and belongs to the image of $[\mathrm{F}]$.
Therefore, by Lemma~\ref{lem5}\eqref{lem5.3}, $mv_i+w_i$ is an eigenvector for $[\mathrm{F}]$ with eigenvalue $m$.
Hence $[\mathrm{F}](mv_i+w_i)=m(mv_i+w_i)$. On the other hand,
\begin{displaymath}
[\mathrm{F}](mv_i+w_i)=m[\mathrm{F}]v_i+[\mathrm{F}]w_i\overset{\eqref{eq3}}{=}m(mv_i+w_i)+ [\mathrm{F}]w_i.
\end{displaymath}
Therefore $[\mathrm{F}]w_i=0$ and since $w_i$ has only non-negative entries and all entries of $[\mathrm{F}]$ are
positive, we obtain $w_i=0$.

It follows that $\mathrm{F}\, P_1\overset{\mathrm{F}(\alpha)}{\longrightarrow}  \mathrm{F}\, P_0$ is a
minimal projective presentation of $\mathrm{F}^2\, X$, in particular, the morphism $\mathrm{F}(\alpha)$ 
is contained in the radical of $\overline{\mathbf{M}}(\clubsuit)$. 

The category $\mathcal{W}$ carries the structure of a
$2$-rep\-re\-sen\-ta\-tion of $\cC_{A}$ by restriction. This $2$-representation is equivalent to $\mathbf{M}$
(the natural inclusion of $\mathbf{M}(\clubsuit)$ into $\mathcal{W}$ is the desired equivalence).
In particular, the $2$-rep\-re\-sen\-ta\-tion of $\cC_{A}$ on $\mathcal{W}$ is simple transitive.
Let $\mathcal{I}$ be the ideal of $\mathcal{W}$ generated by $\mathrm{F}(\alpha)$. This is contained in the
radical of $\mathcal{W}$ by the above and is $\mathrm{F}$-stable by \eqref{eq5}. 
Hence $\mathcal{I}$ is $\cC_{A}$-stable as it is stable under all indecomposable non-identity $1$-morphisms.
By simple transitivity, we thus get $\mathcal{I}=0$, that is $\alpha=0$. The claim follows.
\end{proof}

\begin{lemma}\label{lem8}
Let $B$ be a finite dimensional $\Bbbk$-algebra and $\mathrm{G}$ an exact endofunctor of $B\text{-}\mathrm{mod}$.
Assume that $\mathrm{G}$ sends each simple object of $B\text{-}\mathrm{mod}$  to a projective object.
Then $\mathrm{G}$ is a projective functor.
\end{lemma}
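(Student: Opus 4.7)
Proof plan. The plan is to exhibit $\mathrm{G}$ as naturally isomorphic to tensoring with an explicit projective $B$-$B$-bimodule built from the decompositions of the $\mathrm{G}(L_k)$ into indecomposable projectives.

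First I fix a complete set $e_1,\dots,e_n$ of primitive orthogonal idempotents of $B$, let $L_k := Be_k/\mathrm{rad}(Be_k)$, and use the hypothesis to fix, for each $k$, a decomposition $\mathrm{G}(L_k)\cong\bigoplus_i (Be_i)^{n_{ik}}$ with distinguished inclusions $\iota_{k,i,\alpha}\colon Be_i\hookrightarrow \mathrm{G}(L_k)$, $\alpha=1,\dots,n_{ik}$. Then I define the projective $B$-$B$-bimodule
\[
Q \;:=\; \bigoplus_{i,k=1}^n \bigl(Be_i\otimes_\Bbbk e_k B\bigr)^{n_{ik}},
\]
so that $Q\otimes_B(-)$ is a projective functor by definition.

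To build a comparison natural transformation $\psi\colon Q\otimes_B(-)\to\mathrm{G}$, apply the exact $\mathrm{G}$ to $0\to\mathrm{rad}(Be_k)\to Be_k\xrightarrow{\pi_k}L_k\to 0$; since $\mathrm{G}(L_k)$ is projective, the resulting short exact sequence splits, so I may choose a section $\sigma_k\colon\mathrm{G}(L_k)\to\mathrm{G}(Be_k)$. Each composite $\tilde\iota_{k,i,\alpha}:=\sigma_k\circ\iota_{k,i,\alpha}$ corresponds, via $\mathrm{Hom}_B(Be_i,\mathrm{G}(Be_k))=e_i\mathrm{G}(Be_k)=e_i\mathrm{G}(B)e_k$, to an element $x_{k,i,\alpha}\in e_i\mathrm{G}(B)e_k$. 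For each such $x$, the summand map $Be_i\otimes_\Bbbk e_kN\to \mathrm{G}(N)$ of $\psi_N$ is defined by $b\otimes y\mapsto b\cdot\mathrm{G}(f_y)(x)$, where $f_y\colon Be_k\to N$ is the $B$-linear map sending $e_k$ to $y$. Summing over all $(i,k,\alpha)$ and using $Q\otimes_B N=\bigoplus_{i,k}(Be_i\otimes_\Bbbk e_k N)^{n_{ik}}$ assembles $\psi_N$; naturality in $N$ is immediate from functoriality of $\mathrm{G}$.

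Finally, I show $\psi_N$ is an iso for every $N\in B\text{-}\mathrm{mod}$ by induction on length. For $N=L_j$, unwinding the definitions (using $e_kL_j=\delta_{kj}\Bbbk$ and $\mathrm{G}(\pi_j)\circ\sigma_j=\mathrm{id}$) shows that $\psi_{L_j}\colon\bigoplus_i(Be_i)^{n_{ij}}\to\mathrm{G}(L_j)$ coincides componentwise with the chosen inclusions $\iota_{j,i,\alpha}$ and is therefore an iso. For the inductive step, both $Q\otimes_B(-)$ (because $Q$ is projective as a right $B$-module) and $\mathrm{G}$ (by hypothesis) are exact, so for any short exact sequence $0\to N'\to N\to N''\to 0$ with $N',N''$ shorter, the five lemma propagates the isomorphism from $\psi_{N'}$ and $\psi_{N''}$ to $\psi_N$. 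The main obstacle is the careful bookkeeping ensuring that $\psi_{L_j}$ really recovers the chosen splitting of $\mathrm{G}(L_j)$ rather than just some abstract isomorphism; once the identifications are tracked correctly, the induction is formal and $\mathrm{G}\cong Q\otimes_B(-)$ is a projective functor.
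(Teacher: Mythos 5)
Your argument is correct, but it takes a genuinely different route from the paper's. You construct the candidate projective bimodule $Q=\bigoplus_{i,k}(Be_i\otimes_\Bbbk e_kB)^{\oplus n_{ik}}$ explicitly from the decompositions of the $\mathrm{G}(L_k)$, build a comparison transformation $\psi\colon Q\otimes_B{}_-\to\mathrm{G}$ by hand from sections $\sigma_k$ of $\mathrm{G}(Be_k)\twoheadrightarrow\mathrm{G}(L_k)$ (which exist since $\mathrm{G}$ is exact and $\mathrm{G}(L_k)$ is projective), check that $\psi$ is an isomorphism on simples, and propagate this by exactness of both sides, the five lemma and induction on length; in effect you reprove the relevant case of the Eilenberg--Watts description of such functors. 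The paper instead quotes that any right exact endofunctor is given by tensoring with a bimodule, hence admits a surjection $\mathrm{H}\twoheadrightarrow\mathrm{G}$ from a projective functor chosen minimally (equal tops), shows the kernel $\mathrm{K}$ is exact via the snake lemma, and then kills $\mathrm{K}$ by evaluating on simples, where minimality together with the hypothesis force $\mathrm{H}\,L\cong\mathrm{G}\,L$; no comparison map has to be constructed and no induction on length is needed. Your version is more self-contained and identifies the projective bimodule concretely through the multiplicities $n_{ik}$, while the paper's is shorter because the minimal cover does the bookkeeping. Two small points to tidy: the identity $e_kL_j=\delta_{kj}\Bbbk$ holds when $B$ is basic (or when you index the summands of $Q$ by isomorphism classes of simples rather than by all idempotents in a complete orthogonal set); otherwise superfluous summands $Be_i\otimes_\Bbbk e_kL_j$ with $Be_k\cong Be_j$, $k\neq j$, survive and $\psi_{L_j}$ fails to be injective as written. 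Also, the $\Bbbk$-bilinearity of $(b,y)\mapsto b\cdot\mathrm{G}(f_y)(x)$ uses that $\mathrm{G}$ is $\Bbbk$-linear (additivity alone follows from exactness), an assumption that is implicit in the paper's setting as well.
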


\begin{proof}
Consider a short exact sequence of functors  $\mathrm{K}\hookrightarrow\mathrm{H}\tto \mathrm{G}$ where
$\mathrm{H}$ is a projective functor. This exists because any right exact functor is equivalent to tensoring with
some bimodule and is hence a quotient of a projective functor. We assume that $\mathrm{H}$ is chosen minimally,
that is such that the tops of $\mathrm{H}$ and $\mathrm{G}$ (viewed as bimodules) agree.

Applying $\mathrm{K}\hookrightarrow\mathrm{H}\tto \mathrm{G}$ to a short exact sequence
$X\hookrightarrow Y\tto Z$ in $B\text{-}\mathrm{mod}$ we observe that $\mathrm{H}\,X\tto \mathrm{G}\,X$ and hence
the Snake Lemma yields the exact sequence $\mathrm{K}\,X\hookrightarrow \mathrm{K}\,Y\tto \mathrm{K}\,Z$.
This implies that $\mathrm{K}$ is exact.

Applying $\mathrm{K}\hookrightarrow\mathrm{H}\tto \mathrm{G}$ to a simple object $L\in B\text{-}\mathrm{mod}$ we obtain
an exact sequence $\mathrm{K}\,L\hookrightarrow\mathrm{H}\,L\tto \mathrm{G}\,L$. By our choice of $\mathrm{H}$,
we have $\mathrm{H}\,L=0$ if and only if $\mathrm{G}\,L=0$. Furthermore, by assumptions on $\mathrm{G}$ we have
$\mathrm{H}\,L\cong \mathrm{G}\,L$ whenever $\mathrm{G}\,L\neq 0$. This implies 
$\mathrm{H}\,L\cong \mathrm{G}\,L$ for all $L$ and hence $\mathrm{K}\,L=0$. By exactness of $\mathrm{K}$ we thus
deduce $\mathrm{K}=0$ and hence $\mathrm{H}\cong \mathrm{G}$.
\end{proof}

\begin{lemma}\label{lem9}
Let $A$, $\cC_A$ and $\mathrm{F}$ be  as given in Subsection~\ref{s4.1}. Let further $\mathbf{M}$ be a
$2$-representation of $\cC_A$ and $N\in\overline{\mathbf{M}}(\clubsuit)$ such that 
$\mathrm{F}\, N\neq 0$. Then there is an algebra monomorphism from $A$ to 
$\mathrm{End}_{\overline{\mathbf{M}}(\clubsuit)}(\mathrm{F}\, N)$.
\end{lemma}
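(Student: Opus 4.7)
The plan is to realize the desired monomorphism as the composition
\[
\Phi\colon A \xrightarrow{\psi} \mathrm{End}_{\ccC_A}(\mathrm{F}) \xrightarrow{\mathbf{M}} \mathrm{End}(\mathbf{M}(\mathrm{F})) \xrightarrow{\mathrm{ev}_N} \mathrm{End}_{\overline{\mathbf{M}}(\clubsuit)}(\mathrm{F}\,N),
\]
and to verify that each arrow remains injective under the hypothesis $\mathrm{F}\,N\neq 0$.

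For the first arrow, I would use that $\mathrm{F}$ is realized in $\cC_A$ by tensoring with the $A$-$A$-bimodule $\bigoplus_{i,j}Ae_i\otimes_\Bbbk e_jA\cong A\otimes_\Bbbk A$, which gives an identification $\mathrm{End}_{\ccC_A}(\mathrm{F})\cong\mathrm{End}_{A\text{-}A}(A\otimes_\Bbbk A)$. The ``middle multiplication'' $\psi_a(x\otimes y)=x\otimes ay$ (chosen on the appropriate side so as to yield an algebra homomorphism and not its opposite under the paper's conventions) defines the injective algebra map $\psi\colon A\hookrightarrow\mathrm{End}_{\ccC_A}(\mathrm{F})$; injectivity is witnessed by evaluating on $1\otimes 1$. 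For the second arrow, I would invoke the $\mathcal{J}$-simplicity of $\cC_A$ recalled in Subsection~\ref{s4.1}: the kernel of $\mathbf{M}$ on $2$-morphisms is a two-sided $2$-ideal of $\cC_A$, and any such nonzero ideal must contain $\mathrm{id}_{\mathrm{F}_{ij}}$ for every indecomposable $\mathrm{F}_{ij}$, forcing $\mathbf{M}(\mathrm{F}_{ij})=0$ for all $i,j$ and hence $\mathrm{F}\,N=0$, contradicting the hypothesis.

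The third arrow $\mathrm{ev}_N$ is where the real obstacle sits, since a natural transformation can vanish at a particular object without being globally zero. To overcome this I would exploit the combination of the self-adjunction $\mathrm{F}^*\cong\mathrm{F}$ (from the identification $\mathrm{F}_{ij}^*=\mathrm{F}_{\sigma^{-1}(j),i}$) with the relation $\mathrm{F}\circ\mathrm{F}\cong\mathrm{F}^{\oplus m}$ from \eqref{eq5}. Suppose $\Phi(a)=0$; then naturality of $\mathbf{M}(\psi_a)$ together with horizontal composition with $\mathrm{id}_\mathrm{F}$ shows that $\mathbf{M}(\psi_a)$ vanishes on each object of the form $\mathrm{F}^k\,N$ inductively in $k$. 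Since $\mathrm{F}\,N\neq 0$, the sub-$2$-representation $\mathbf{G}_\mathbf{M}(\{N\})$ is a nontrivial finitary $2$-representation of $\cC_A$ on which $\mathbf{M}(\psi_a)$ acts by zero; a further application of $\mathcal{J}$-simplicity, this time to the restricted $2$-representation, then forces $a=0$, against the injectivity established in the first two steps.

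The hardest part is making the propagation from ``$\mathrm{ev}_N(\mathbf{M}(\psi_a))=0$'' to ``$\mathbf{M}(\psi_a)$ vanishes on the whole sub-$2$-representation generated by $N$'' fully rigorous; this step depends essentially on the explicit structure of $\mathrm{F}$ as the sum of all projective functors (and on the absorbing identity $\mathrm{F}\circ\mathrm{F}\cong\mathrm{F}^{\oplus m}$) rather than on purely formal $2$-categorical arguments.
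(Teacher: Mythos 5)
Your overall skeleton (embed $A$ into the $2$-endomorphism algebra of $\mathrm{F}$ and compose with evaluation at $N$) is the same as the paper's, but both of the points where you depart from it break down. First, the copy of $A$ you choose is the wrong one. Writing $\mathrm{End}_{\ccC(\clubsuit,\clubsuit)}(\mathrm{F})$ as bimodule endomorphisms of $\bigoplus_{i,j}Ae_i\otimes_{\Bbbk}e_jA$, the ``middle multiplication'' $\psi_a\colon x\otimes y\mapsto x\otimes ay$ acts on the tensor factor that actually gets applied to the object: in the defining (abelianized) $2$-representation, $(\psi_a)_M$ is the map $x\otimes w\mapsto x\otimes aw$ on $Ae_i\otimes_{\Bbbk}e_jM$. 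Taking $\mathbf{M}$ to be the defining $2$-representation, $N=L$ a simple object and $0\neq a\in\mathrm{rad}\,A$, one has $\mathrm{F}\,L\neq 0$ but $(\psi_a)_L=0$, so your composite $\Phi$ is not injective. The issue with ``sides'' is therefore not homomorphism versus anti-homomorphism, but that one tensor factor can be killed by evaluation while the other (right multiplication on the $Ae_i$-factor, the paper's $a\otimes 1$ inside $A\otimes_{\Bbbk}A^{\mathrm{op}}$) cannot. Second, the propagation step is not valid: horizontal composition gives $(\mathrm{id}_{\mathrm{F}}\circ_0\psi_a)_N=\mathrm{F}\bigl((\psi_a)_N\bigr)=0$, which carries no information, while $(\psi_a\circ_0\mathrm{id}_{\mathrm{F}})_N=(\psi_a)_{\mathrm{F}N}$, and nothing forces this to vanish when $(\psi_a)_N$ does; the kernel of $\mathrm{ev}_N$ is only a \emph{left} ideal. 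The same example above ($(\psi_a)_L=0$ but $(\psi_a)_{\mathrm{F}L}\neq 0$, since $\mathrm{F}L$ is a nonzero projective) shows the implication ``vanishes at $N$ $\Rightarrow$ vanishes at $\mathrm{F}N$'' is simply false, so the reduction to the subrepresentation $\mathbf{G}_{\mathbf{M}}(\{N\})$ and the final appeal to $\mathcal{J}$-simplicity cannot be repaired along these lines.

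The idea you are missing is the paper's control of the kernel of $\mathrm{ev}_N$ itself. Since $\mathrm{F}\,N\neq 0$, there is a left cell $\mathcal{L}=\mathcal{L}_j$ none of whose $1$-morphisms annihilates $N$, so the kernel of $\mathrm{ev}_N$ is a left ideal containing no identity $2$-morphism of a $1$-morphism in $\mathcal{L}$; by \cite[Proposition~21]{MM2} it is therefore contained in the unique maximal such left ideal, which by \cite[Subsection~6.5]{MM2} is the annihilator of the simples in $\overline{\mathbf{C}}_{\mathcal{L}}$, and by Proposition~\ref{prop11} (cell $\cong$ defining representation) equals $A\otimes\mathrm{rad}\,A^{\mathrm{op}}$. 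This ideal meets the embedded copy $A\otimes 1$ trivially, which is exactly what makes the composite injective for \emph{every} $N$ with $\mathrm{F}\,N\neq 0$, without any faithfulness or propagation argument. Your use of $\mathcal{J}$-simplicity only yields that $\mathbf{M}(\psi_a)$ is a nonzero natural transformation, which, as you yourself note, is strictly weaker than nonvanishing at the particular object $N$.
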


\begin{proof}
From the definitions we know that the $2$-endomorphism algebra of $\mathrm{F}$ is isomorphic to 
$A\otimes_{\Bbbk}A^{\mathrm{op}}$. We have a natural algebra monomorphism from $A$ to $A\otimes_{\Bbbk}A^{\mathrm{op}}$
given by $a\mapsto a\otimes 1$. Consider the evaluation homomorphism
\begin{displaymath}
\xymatrix{
\mathrm{End}_{\ccC(\clubsuit,\clubsuit)}(\mathrm{F}) 
\ar[rr]^{\mathrm{ev}_N}&&\mathrm{End}_{\overline{\mathbf{M}}(\clubsuit)}(\mathrm{F}\, N).
}
\end{displaymath}

For a fixed left cell $\mathcal{L}$ consider the corresponding cell $2$-representation $\mathbf{C}_{\mathcal{L}}$
of $\cC_A$. By \cite[Proposition~21]{MM2}, there is a unique maximal left ideal in $\cC_A$
which does not contain any identity $2$-morphisms for $1$-morphisms in $\mathcal{L}$. Now, by
\cite[Subsection~6.5]{MM2}, this left ideal is the annihilator of the sum of all simple objects in 
$\overline{\mathbf{C}}_{\mathcal{L}}$. From Proposition~\ref{prop11} we know that $\mathbf{C}_{\mathcal{L}}$
is equivalent to the defining representation which implies that this maximal left ideal is, in fact,
$A\otimes\mathrm{rad}\,A^{\mathrm{op}}$. Therefore the kernel of $\mathrm{ev}_N$, which is a left ideal, 
must belong to $A\otimes\mathrm{rad}\,A^{\mathrm{op}}$.

This implies that the kernel of $\mathrm{ev}_N$ does not intersect the space $A\otimes 1$ and hence
the induced composition $A\to  \mathrm{End}_{\ccC(\clubsuit,\clubsuit)}(\mathrm{F})
\to \mathrm{End}_{\overline{\mathbf{M}}(\clubsuit)}(\mathrm{F}\, N)$ is injective. 
\end{proof}

\subsection{Main result}\label{s4.4}

\begin{theorem}\label{thmmain}
Let $A$ be as given in Subsection~\ref{s4.1}. Then any simple transitive 
$2$-rep\-re\-sen\-ta\-tion of $\cC_{A}$ is equivalent to some cell $2$-representation.
\end{theorem}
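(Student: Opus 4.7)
The natural strategy is to split on whether $\mathrm{F}$ (equivalently, the whole two-sided cell $\mathcal{J}$, by $\mathcal{J}$-simplicity of $\cC_A$) acts by zero on $\mathbf{M}$. If $\mathrm{F}$ does annihilate $\mathbf{M}$, then the $2$-action of $\cC_A$ factors through the quotient killing $\mathcal{J}$, leaving only $\mathbbm{1}_\clubsuit$ as a nonzero $1$-morphism; transitivity then forces $\mathbf{M}(\clubsuit)$ to have a single isomorphism class of indecomposables $X$, and simple transitivity forces $\mathrm{End}(X)=\Bbbk$. This is precisely the cell $2$-representation $\mathbf{C}_{\{\mathbbm{1}_\clubsuit\}}$.

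From now on assume $\mathrm{F}$ acts non-trivially; the aim is to show $\mathbf{M}\simeq\mathbf{D}\simeq\mathbf{C}_{\mathcal{L}_j}$ (the latter by Proposition~\ref{prop11}). Lemma~\ref{lem5} gives that $[\mathrm{F}]$ has strictly positive integer entries, rank one, and satisfies $[\mathrm{F}]^2=m[\mathrm{F}]$, while Lemma~\ref{lem7} gives that every $\mathrm{F}\,X$ is projective in $\overline{\mathbf{M}}(\clubsuit)$. Positivity of $[\mathrm{F}]$ then forces every indecomposable of $\mathbf{M}(\clubsuit)$ to appear as a summand of each $\mathrm{F}\,X$ and hence to be projective in $\overline{\mathbf{M}}(\clubsuit)$, so $\mathbf{M}(\clubsuit)\simeq B\text{-}\mathrm{proj}$ for a basic finite-dimensional $\Bbbk$-algebra $B$. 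Since $\cC_A$ is weakly fiat, $\mathbf{M}$ is exact, and Lemma~\ref{lem8} (whose ``simples to projectives'' hypothesis is supplied by Lemma~\ref{lem7}) identifies $\mathrm{F}$ with a projective functor on $B\text{-}\mathrm{mod}$, i.e.\ tensoring with a projective $B$-$B$-bimodule.

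The central step is to match this projective bimodule with $A\otimes_\Bbbk A$. Writing it as $\bigoplus_{i,j}(Be_i\otimes_\Bbbk e_jB)^{c_{ij}}$, the rank-one factorization $[\mathrm{F}]_{Y,X}=d_Ye_X$ forces $c_{ij}=d_iu_j$ for positive integers $d_i,u_j$. The self-duality $\mathrm{F}\cong\mathrm{F}^*$ combined with Lemma~\ref{lem6} (giving $\llbracket\mathrm{F}\rrbracket=[\mathrm{F}]^t$) forces the vectors $(u_i)$ and $(d_i)$ to be proportional. The embedding $A\hookrightarrow\mathrm{End}_{\overline{\mathbf{M}}(\clubsuit)}(\mathrm{F}\,L)$ of Lemma~\ref{lem9}, for any simple $L$, together with the relation $\mathrm{F}\circ\mathrm{F}\cong\mathrm{F}^{\oplus m}$, pins down the scalars: $d_i=u_i=1$, $B$ has exactly $n$ primitive idempotents in canonical bijection with those of $A$, and each $\mathrm{F}_{ij}$ acts on $\mathbf{M}(\clubsuit)$ as tensoring with $Be_i\otimes_\Bbbk e_jB$.

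Having matched the structure, I would finish by a Yoneda-type construction: pick a simple $L\in\overline{\mathbf{M}}(\clubsuit)$ corresponding to the idempotent $e_j$ under the bijection above, let $X_0$ be its projective cover in $\mathbf{M}(\clubsuit)$, and let $\Phi:\mathbb{P}_\clubsuit\to\overline{\mathbf{M}}$ be the unique $2$-natural transformation sending $\mathbbm{1}_\clubsuit$ to $X_0$. By the matching above, the image of $\Phi$ lies in the additive closure defining $\mathbf{C}_{\mathcal{L}_j}$, and simple transitivity of $\mathbf{M}$ guarantees that $\Phi$ kills the maximal ideal defining $\mathbf{C}_{\mathcal{L}_j}$ (Lemma~\ref{lem2}), producing a $2$-natural transformation $\mathbf{C}_{\mathcal{L}_j}\to\mathbf{M}$ which transitivity makes essentially surjective and simple transitivity of both sides makes an equivalence. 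The main obstacle I anticipate is the structural rigidity argument of the third paragraph: one must squeeze out of the combined constraints (rank-one positivity, self-duality via Lemma~\ref{lem6}, and the $A$-embedding of Lemma~\ref{lem9}) enough information to exclude all ``exotic'' choices of $B$, and in particular to verify that the $A$-embedding is in fact surjective onto $\mathrm{End}(\mathrm{F}\,L)$ so that $B$ is forced to be Morita-equivalent to $A$ with precisely matched primitive idempotents.
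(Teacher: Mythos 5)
Your skeleton is the right one (Lemmas~\ref{lem5}--\ref{lem9} plus Proposition~\ref{prop11} are exactly the tools the paper uses, and your preliminary case where $\mathrm{F}$ acts by zero is harmless), but your third paragraph --- the claim that rank-one positivity of $[\mathrm{F}]$, the symmetry $\llbracket\mathrm{F}\rrbracket=[\mathrm{F}]^t$ from Lemma~\ref{lem6}, the embedding of Lemma~\ref{lem9} and the relation $\mathrm{F}\circ\mathrm{F}\cong\mathrm{F}^{\oplus m}$ already ``pin down'' $d_i=u_i=1$ and force the primitive idempotents of $B$ to match those of $A$ --- is precisely where the theorem lives, and as written it is a genuine gap rather than a routine verification. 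The constraints you list do not by themselves exclude exotic $B$: take $B=\Bbbk$ with $\mathrm{F}$ acting through the bimodule $\Bbbk^{\oplus m}$, so $\mathrm{F}\,X\cong X^{\oplus m}$. Then $[\mathrm{F}]=(m)$ is positive, of rank one, symmetric, and satisfies $[\mathrm{F}]^2=m[\mathrm{F}]$; every $\mathrm{F}\,X$ is projective; and $A$ embeds into $\mathrm{End}(\mathrm{F}\,L)\cong\mathrm{Mat}_{m\times m}(\Bbbk)$ via its regular representation, so the conclusion of Lemma~\ref{lem9} is satisfied too. Nothing in your list rules this configuration out, and you acknowledge yourself that the surjectivity of the $A$-embedding (equivalently $B\cong A$) is exactly what you have not verified.

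What the paper does at this point uses two further inputs that never appear in your argument. First, Theorem~\ref{thm1}\eqref{thm1.4} produces a column of $\llbracket\mathrm{F}\rrbracket$ with sum $v_1+\cdots+v_k\leq m$, and self-injectivity of $A$ (each indecomposable summand of $A_A$ has simple socle) upgrades the embedding $A\hookrightarrow B'=\mathrm{End}_{\overline{\mathbf{M}}(\clubsuit)}(\mathrm{F}\,L_j)$ of Lemma~\ref{lem9} to an embedding of right $A$-modules $A_A\hookrightarrow\bigoplus_i\mathrm{Hom}(X_i,\mathrm{F}\,L_j)$, whose dimension is exactly that column sum; comparing with $\dim_\Bbbk A=m$ forces equality, and Corollary~\ref{corpf} then makes all columns of $\llbracket\mathrm{F}\rrbracket$ equal, giving $\mathrm{F}\,L_s\cong(X_1\oplus\cdots\oplus X_k)^{\oplus l}$ for all $s$, $B'\cong\mathrm{Mat}_{l\times l}(B)$ and $\dim_\Bbbk B=m/l$. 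Second, $\mathcal{J}$-simplicity of $\cC_A$ yields an embedding $A\otimes_\Bbbk A^{\mathrm{op}}\hookrightarrow\mathrm{End}(\Theta)$ for $\Theta=\overline{\mathbf{M}}(\mathrm{F})$ (a projective functor by Lemma~\ref{lem8}), and the count $\dim\mathrm{End}(\Theta)=l^2\cdot(m/l)^2=m^2$ turns this into an isomorphism, which together with the sandwich $B\hookrightarrow A\hookrightarrow B'$ forces $B\cong A\cong B'$; this is the step that kills the example above, since $\mathrm{Mat}_{m\times m}(\Bbbk)$ is semisimple while $A\otimes_\Bbbk A^{\mathrm{op}}$ is not. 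Without the Perron--Frobenius column bound combined with the simple-socle argument, and without the $m^2$-dimension comparison via $\mathcal{J}$-simplicity, your argument does not close; once they are supplied, your concluding transport-of-structure paragraph is essentially the paper's appeal to Proposition~\ref{prop11}.
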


\begin{proof}
Consider a simple transitive $2$-representation $\mathbf{M}$ of $\cC_{A}$ and its abelianization $\overline{\mathbf{M}}$.
Let $X_1,X_2,\dots,X_k$ be a complete and irredundant list of representatives of isomorphism classes of 
indecomposable objects in $\mathbf{M}(\clubsuit)$. Denote by $B$ the endomorphism algebra of
$\displaystyle\bigoplus_{i=1}^k X_i$. Note that $\overline{\mathbf{M}}(\clubsuit)$ is equivalent to
$B^{\mathrm{op}}\text{-}\mathrm{mod}$. For $i=1,2,\dots,k$ we let $L_i$ denote the simple quotient in 
$\overline{\mathbf{M}}(\clubsuit)$ of the indecomposable projective object $0\to X_i$.

Recall the $1$-morphism $\mathrm{F}$ defined in Subsection~\ref{s4.1} and the corresponding matrix 
$\llbracket \mathrm{F}\rrbracket$ describing the action of $\mathrm{F}$ on the Grothendieck group of
$\overline{\mathbf{M}}(\clubsuit)$ in the basis of simple modules.
By Theorem~\ref{thm1}\eqref{thm1.4}, there is a column,  
\begin{displaymath}
\left(\begin{array}{c}v_1\\v_2\\ \vdots\\v_k\end{array}\right)
\end{displaymath}
in $\llbracket \mathrm{F}\rrbracket$, say with index $j$, such that $v_1+v_2+\dots +v_k\leq m$. 
By Lemma~\ref{lem7} we have
\begin{displaymath}
\mathrm{F}\, L_j\cong \bigoplus_{i=1}^k X_i^{\oplus l_i} 
\end{displaymath}
for some non-negative integers $l_1,l_2,\dots,l_k$. Transitivity of $\mathbf{M}$ and \eqref{eq5} imply
that all $l_1,l_2,\dots,l_k$ are, in fact, positive integers. Denote by $B'$ the endomorphism algebra 
of $\mathrm{F}\, L_j$ which is Morita equivalent to $B$ by the previous sentence. The vector $(l_1,l_2,\dots,l_k)^t$
is, by \eqref{eq5}, an eigenvector of $[\mathrm{F}]$. Moreover, by Lemma~\ref{lem6} we  have
$[\mathrm{F}]=\llbracket \mathrm{F}^*\rrbracket^t=\llbracket \mathrm{F}\rrbracket^t$ where the latter
equality follows from self-adjointness of $\mathrm{F}$.

Lemma~\ref{lem9} provides an algebra embedding of $A$ into $B'$ and hence an embedding
$A_A\hookrightarrow B'_A$ of $A$-modules. Since the algebra $A$ (and hence also $A^{\mathrm{op}}$) is self-injective,
each indecomposable summand of $A_A$ has simple socle. Therefore the embedding $A_A\hookrightarrow B'_A$
induces an embedding (of right $A$-modules) from $A_A$ into 
\begin{equation}\label{eq6}
\bigoplus_{i=1}^k\mathrm{Hom}_{\overline{\mathbf{M}}(\clubsuit)}(X_i,\mathrm{F}\, L_j). 
\end{equation}
The dimension of the latter equals $v_1+v_2+\dots+v_k\leq m$, while $\dim_{\Bbbk} A=m$, therefore
$v_1+v_2+\dots+v_k=m$ and by Corollary~\ref{corpf} all columns of $\llbracket \mathrm{F}\rrbracket$
coincide. In particular, it follows that $l_1=l_2=\dots=l_k=l$ for some $l\in\mathbb{N}$ and thus
$B'$ is isomorphic to the algebra of $l\times l$ matrices with coefficients in $B$.

The algebra of $B'$-endomorphisms of \eqref{eq6} is isomorphic to $B$ and embeds into the algebra of
$A$-endomorphisms of \eqref{eq6} (the latter embedding is due to the fact that 
$A$ is a subalgebra of $B'$) which is equal to $A$ by comparing  dimensions. Therefore we have
\begin{displaymath}
B\hookrightarrow A\hookrightarrow B'. 
\end{displaymath}

Next we argue that $\mathrm{F}\, L_s=(X_1\oplus X_2\oplus \dots \oplus X_k)^{\oplus l}$ for any $s$.
The arguments above imply that $\mathrm{F}\, L_s=(X_1\oplus X_2\oplus \dots \oplus X_k)^{\oplus l_s}$
for some positive integer $l_s$. Now $l=l_s$ since all columns of
$\llbracket \mathrm{F}\rrbracket$ are equal.

As $\mathrm{F}\, L_j=(X_1\oplus X_2\oplus \dots \oplus X_k)^{\oplus l}$, it follows that 
$\dim_{\Bbbk}B'=lm$ and therefore $\dim_{\Bbbk}B=\frac{m}{l}$. Set 
$\Theta:=\overline{\mathbf{M}}(\mathrm{F})$. Lemma~\ref{lem8} implies that $\Theta$ is a projective functor
which sends each simple to $(X_1\oplus X_2\oplus \dots \oplus X_k)^{\oplus l}$.
The dimension of the endomorphism algebra of $\Theta$ thus equals $l\cdot l\cdot \frac{m}{l}\cdot \frac{m}{l}=m^2$.
Note that $\mathcal{J}$-simplicity of $\cC_A$ gives us a natural inclusion of the algebra 
$\mathrm{End}_{\ccC(\clubsuit,\clubsuit)}(\mathrm{F})\cong A\otimes A^{\mathrm{op}}$ of $2$-endomorphisms of $\mathrm{F}$
into the endomorphism algebra of $\Theta$ in the category of right exact endofunctors of
$\overline{\mathbf{M}}(\clubsuit)$. 
As both these algebras have dimension $m^2$, this natural inclusion is, in fact, an isomorphism.

Therefore $B\cong A\cong B'$ and thus $\overline{\mathbf{M}}$ is equivalent to the defining 
$2$-representation of $\cC_A$. Now the proof is completed by applying Proposition~\ref{prop11}. 
\end{proof}

\subsection{Generalizations}\label{s4.5}

\begin{remark}\label{rem12}
{\rm 
Theorem~\ref{thmmain} generalizes verbatim and with the same proof to the case where $A$ is a 
basic self-injective finite dimensional $\Bbbk$-algebra (not necessarily connected). The technical
difficulty in this case is that, in order to be consistent with the requirement for $\mathbbm{1}_{\mathtt{i}}$
to be indecomposable, one has to consider a $2$-category with several objects indexed by connected
components of $A$. 
}
\end{remark}

\begin{remark}\label{rem14}
{\rm  
Theorem~\ref{thmmain} generalizes verbatim to $2$-subcategories of $\cC_A$ described in \cite[Subsection~4.5]{MM3}.
These $2$-subcategories exhaust all ``simple'' $2$-categories of certain type, see \cite[Theorem~13]{MM3} 
and Subsection~\ref{s5.1} below for details.
The only difference between those $2$-subcategories and $\cC_A$ is that the former may contain fewer
$2$-endomorphisms of the identity $1$-morphisms. We did not use $2$-endomorphisms of identity $1$-morphisms
in the above proof.
}
\end{remark}

\section{Transitive $2$-representations for some general fiat $2$-categories}\label{s5}

\subsection{Strong regularity and a numerical condition}\label{s5.1}

Let $\cC$ be a fiat $2$-category and $\mathcal{J}$ a two-sided cell in $\cC$. We say that $\mathcal{J}$ is 
{\em strongly regular}, see \cite[Subsection~4.8]{MM1}, provided that 
\begin{itemize}
\item different right (left) cells in $\mathcal{J}$ are not comparable with respect to the right
(left) preorder;
\item the intersection of a left and a right cell in $\mathcal{J}$ consists of exactly one isomorphism
class of indecomposable $1$-morphisms.
\end{itemize}
For example, the $2$-category $\cC_A$ from Subsection~\ref{s4.1} is strongly regular.

If $\mathcal{J}$ is strongly regular, we have a well-defined function sending $\mathrm{F}\in \mathcal{J}$ to 
the number of indecomposable summands in $\mathrm{F}^*\circ\mathrm{F}$ which belong to $\mathcal{J}$.
We will say that $\mathcal{J}$ satisfies the {\em numerical condition} provided that this function
is constant on right cells. Again, it is easy to check that the $2$-category $\cC_A$ 
from Subsection~\ref{s4.1} satisfies the {numerical condition},
see \cite[Subsection~7.3]{MM1}.

Another example of a $2$-category in which each two-sided cell is strongly regular and
satisfies the numerical condition is the $2$-category $\cS_n$ of Soergel bimodules for the symmetric group $S_n$, 
see \cite[Subsection~7.1]{MM1} and \cite[Example~3]{MM2} for details.

\subsection{Another generalization of the main result}\label{s5.2}

\begin{theorem}\label{thm15}
Let $\cC$ be a fiat $2$-category such that all two-sided cells in $\cC$ are strongly regular and
satisfy the numerical condition. Then any simple transitive $2$-representation of $\cC$ is equivalent
to a cell $2$-representation.
\end{theorem}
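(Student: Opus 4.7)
The plan is to reduce Theorem~\ref{thm15} to Theorem~\ref{thmmain} (together with Remark~\ref{rem12} and Remark~\ref{rem14}) by showing that the action of $\cC$ on any simple transitive $\mathbf{M}$ factors, in an appropriate sense, through a $2$-category of the form $\cC_A$ (or one of its $2$-subcategories listed in Remark~\ref{rem14}), for a suitable basic self-injective algebra $A$ determined by $\mathbf{M}$.

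First, I would isolate the \emph{apex} of $\mathbf{M}$. The set of isomorphism classes of $1$-morphisms $\mathrm{F}$ with $\mathbf{M}(\mathrm{F})\neq 0$ is non-empty (it contains the identities) and is closed under taking $\geq_J$-smaller elements on the nose, using fiat-ness plus transitivity (if $\mathrm{F}\, X\neq 0$ for some nonzero $X$, every $1$-morphism $\geq_J\mathrm{F}$ acts non-trivially on some object). A standard argument combining simple transitivity with Lemma~\ref{lem3} then produces a unique two-sided cell $\mathcal{J}$ such that $\mathbf{M}(\mathrm{G})=0$ for every $\mathrm{G}$ with $\mathrm{G}>_J \mathcal{J}$, while $\mathbf{M}(\mathrm{F})\neq 0$ for $\mathrm{F}\in\mathcal{J}$. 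I would also record that, by transitivity, for any indecomposable $X$ in $\mathbf{M}(\mathtt{i})$ and any $\mathrm{F}\in\mathcal{J}\cap\cC(\mathtt{i},\mathtt{j})$ we have $\mathrm{F}\, X\neq 0$, and that $\mathrm{F}^*\circ\mathrm{F}$ decomposes into indecomposables each lying in $\mathcal{J}$ or in cells below.

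Next, fix a left cell $\mathcal{L}\subset\mathcal{J}$. By strong regularity, each right cell in $\mathcal{J}$ meets $\mathcal{L}$ in a single indecomposable. Set $\mathrm{F}_{\mathcal{L}}:=\bigoplus_{\mathrm{G}\in\mathcal{L}}\mathrm{G}$ and fix an indecomposable object $X$ in the appropriate $\mathbf{M}(\mathtt{i})$. Put $A:=\mathrm{End}_{\ccC}(\mathrm{F}_{\mathcal{L}})/\mathbf{I}_{>_J\mathcal{J}}$, the quotient of the endomorphism algebra of $\mathrm{F}_{\mathcal{L}}$ by the ideal coming from $2$-morphisms factoring through $1$-morphisms strictly above $\mathcal{J}$. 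Using the fiat structure (adjunction units and counits) together with strong regularity, I would show in the manner of \cite[Section~4]{MM3} that $A$ is a basic self-injective finite-dimensional $\Bbbk$-algebra, with Nakayama permutation induced by the involution $*$; this is where the decomposition of $\mathrm{F}^*\circ\mathrm{F}$ dictated by the numerical condition enters, guaranteeing that the composition $\mathrm{F}_{\mathcal{L}}\circ \mathrm{F}_{\mathcal{L}}^*$ is a multiple of $\mathrm{F}_{\mathcal{L}}\circ \mathrm{F}_{\mathcal{L}}^* / \mathrm{F}_{\mathcal{L}}$ in the same pattern as in $\cC_A$, so that we obtain a $2$-functor from (the relevant quotient of) $\cC$ into $\cC_A$ of the type covered by Remark~\ref{rem14}.

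With this in hand, $\mathbf{M}$ becomes a simple transitive $2$-representation of the resulting $2$-subcategory of $\cC_A$. Applying Theorem~\ref{thmmain} (extended via Remarks~\ref{rem12} and~\ref{rem14}), we conclude that $\mathbf{M}$ is equivalent to a cell $2$-representation of that $2$-subcategory. A final comparison, using Proposition~\ref{prop11} together with the $\cC$-equivariance of the construction of $A$, identifies this cell $2$-representation with $\mathbf{C}_{\mathcal{L}}$ for the original left cell $\mathcal{L}\subset\mathcal{J}$ in $\cC$.

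The main obstacle I expect is the middle step: producing the self-injective algebra $A$ and the $2$-functor $\cC\to\cC_A$ (of the refined type from Remark~\ref{rem14}) such that $\mathbf{M}$ is pulled back from a $\cC_A$-representation. Strong regularity controls the combinatorics of cells inside $\mathcal{J}$, and the numerical condition is precisely what is needed to match the multiplicities $\dim(e_jAe_s)$ appearing in the relation $\mathrm{F}_{ij}\circ\mathrm{F}_{st}\cong\mathrm{F}_{it}^{\oplus\dim(e_jAe_s)}$ inside $\cC_A$. Verifying that these two conditions together suffice, via a careful bookkeeping of endomorphism algebras and adjunctions, is the substantive part of the argument; everything else then follows from the results already in place.
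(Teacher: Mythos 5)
Your route is essentially the paper's: find the apex two-sided cell $\mathcal{J}$, identify the part of $\cC$ supported on $\mathcal{J}$ with (a $2$-subcategory of) some $\cC_A$ using strong regularity and the numerical condition, and then apply Theorem~\ref{thmmain} via Remark~\ref{rem14}. (The paper black-boxes your ``substantive middle step'' by quoting \cite[Theorem~13]{MM3}, which is exactly the statement that such a $\mathcal{J}$-simple fiat subquotient is one of the $2$-subcategories of $\cC_A$ from \cite[Subsection~4.5]{MM3}; reproving it ``in the manner of MM3'' is legitimate but is not carried out here.) However, two steps in your outline are genuine gaps. First, before Theorem~\ref{thmmain} can be invoked you must know that $\mathbf{M}$, regarded as a $2$-representation of the $2$-subcategory $\cC_{\mathcal{J}}$ generated by $\mathcal{J}$ and the identity $1$-morphisms (equivalently of its $\mathcal{J}$-simple quotient, the $\cC_A$-type category), is still \emph{simple} transitive; you merely assert this. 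It is not automatic: an ideal stable under the fewer $1$-morphisms of $\cC_{\mathcal{J}}$ need not be $\cC$-stable, so simplicity could a priori be lost under restriction. The paper proves it: given a nonzero morphism $\alpha$ in a $\cC_{\mathcal{J}}$-stable ideal containing no identities, simple transitivity over $\cC$ yields $\mathrm{G}$ with $\mathrm{G}(\alpha)$ containing an identity summand; transitivity yields $\mathrm{F}\in\mathcal{J}$ not killing that summand; and maximality of $\mathcal{J}$ forces all summands of $\mathrm{F}\circ\mathrm{G}$ to lie in $\mathcal{J}$, so the ideal would contain an identity morphism, a contradiction. Some argument of this kind is required. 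Note also that your proposed $2$-functor cannot be defined on (a quotient of) all of $\cC$: lower cells act non-trivially on $\mathbf{M}$ but have no counterpart in $\cC_A$, so one must first restrict to $\cC_{\mathcal{J}}$ and only then take the quotient.

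Second, your concluding step (``a final comparison, using Proposition~\ref{prop11} together with the $\cC$-equivariance of the construction of $A$'') is not an argument. Knowing that the restriction of $\mathbf{M}$ to the subquotient supported on $\mathcal{J}$ is a cell $2$-representation of that smaller $2$-category does not by itself determine $\mathbf{M}$ as a $2$-representation of $\cC$, precisely because the rest of $\cC$ also acts. The paper closes this by a genuine lifting construction: choose a simple object $L$ in $\overline{\mathbf{M}}(\mathtt{i})$ not annihilated by $\mathcal{L}$, use $P_{\mathbbm{1}_{\mathtt{i}}}\mapsto L$ to obtain a morphism from the principal $2$-representation $\mathbb{P}_{\mathtt{i}}$ of $\cC$ to $\overline{\mathbf{M}}$, check (using \cite[Subsection~4.5]{MM1} and the fact that the ideal $\mathbf{I}$ from Subsection~\ref{s2.3} kills $L$) that its image on projective objects realizes the cell $2$-representation $\mathbf{C}_{\mathcal{L}}$ of $\cC$, and finally identify this $2$-representation with $\mathbf{M}$ itself via \cite[Theorem~11]{MM2}. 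Without this step (or an equivalent one), the equivalence $\mathbf{M}\simeq\mathbf{C}_{\mathcal{L}}$ as $2$-representations of $\cC$, rather than of the subquotient, is not established.
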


\begin{proof}
Let $\mathbf{M}$ be a simple transitive $2$-representation of $\cC$. First of all, we claim that there is
a unique maximal two-sided cell $\mathcal{J}$ which does not annihilate $\mathbf{M}$. Indeed, assume that
we have two maximal two-sided cells $\mathcal{J}_i$ for $i=1,2$ with this property. Then for any 
$\mathrm{F}_i\in \mathcal{J}_i$, $i=1,2$, we have $\mathbf{M}(\mathrm{F}_1)\circ \mathbf{M}(\mathrm{F}_2)=0$
and $\mathbf{M}(\mathrm{F}_2)\circ \mathbf{M}(\mathrm{F}_1)=0$ whenever the expression makes sense.
Therefore the additive closure of objects in all $\mathbf{M}(\mathtt{i})$ which may be obtained by applying 
$1$-morphisms from $\mathcal{J}_1$ is, on the one hand, a $2$-subrepresentation of $\mathbf{M}$ (by maximality
of $\mathcal{J}_1$) and, on the other hand, annihilated by all $1$-morphisms from $\mathcal{J}_2$.
Due to transitivity of $\mathbf{M}$, we obtain that $\mathcal{J}_2$ annihilates $\mathbf{M}$, a contradiction.
 
Now denote by $\mathcal{J}$ the maximal two-sided cell of $\cC$ which does not annihilate $\mathbf{M}$.
Without loss of generality we may assume that $\mathcal{J}$ is the unique maximal two-sided cell in $\cC$
and that $\mathbf{M}$ is $2$-faithful in the sense that it does not annihilate any $2$-morphisms. Indeed,
we may replace $\cC$ by its quotient modulo the kernel of $\mathbf{M}$ which does not change the structure
of the surviving cells. 

Denote by $\cC_\mathcal{J}$ the $2$-full $2$-subcategory of $\cC$ formed by all $1$-morphisms in $\mathcal{J}$ together
with their respective identity $1$-morphisms. By restriction, $\mathbf{M}$ becomes a $2$-representation 
$\mathbf{M}_\mathcal{J}$ of
$\cC_\mathcal{J}$. As the additive closure of $1$-morphisms in $\mathcal{J}$ is stable with respect to left
multiplication by $1$-morphisms in $\cC$, it follows that $\mathbf{M}$ is a transitive $2$-representation
of $\cC_\mathcal{J}$. 

We claim that $\mathbf{M}_\mathcal{J}$ is simple transitive. Indeed, assume that 
$\mathbf{J}$ is an ideal of $\mathbf{M}$ stable with respect to the action of $\cC_\mathcal{J}$. Assume that it 
is nonzero and take any nonzero morphism $\alpha$ in it. As $\mathbf{M}$ is a simple transitive $2$-representation
of $\cC$, there exists a $1$-morphism $\mathrm{G}$ in $\cC$ such that $\mathrm{G}(\alpha)$ has an invertible
nonzero direct summand. Applying $1$-morphisms from $\cC_\mathcal{J}$ we, on the one hand, will map such an 
invertible direct summand to another invertible morphism (and since $\mathbf{M}$ is transitive there is a 
$1$-morphism $\mathrm{F}$ in $\cC_\mathcal{J}$ which does not annihilate this invertible direct summand). 
On the other hand,  $\mathrm{F}\circ \mathrm{G}$ is in $\mathcal{J}$ and hence application of it to
$\alpha$ cannot produce any invertible direct summands, a contradiction. Therefore $\mathbf{J}$ is zero.

By Theorem~\ref{thmmain}, Remark~\ref{rem14} and \cite[Theorem~13]{MM3}, 
$\mathbf{M}_\mathcal{J}$  is equivalent to a cell $2$-representation $\mathbf{C}^{\mathcal{J}}_{\mathcal{L}}$ 
of $\cC_\mathcal{J}$ where $\mathcal{L}$ is a left cell in $\mathcal{J}$. 
By \cite[Theorem~43]{MM1} any choice of $\mathcal{L}$ yields an equivalent $2$-representation. 
Set $\mathtt{i}=\mathtt{i}_{\mathcal{L}}$ and let $L$ be a simple
object in $\overline{\mathbf{C}}^{\mathcal{J}}_{\mathcal{L}}(\mathtt{i})$ which is not annihilated
by $1$-morphisms in $\mathcal{L}$. Then we can consider $L$ as an object in $\overline{\mathbf{M}}(\mathtt{i})$. 

Sending $P_{\mathbbm{1}_{\mathtt{i}}}$ to $L$ gives a $2$-natural transformation $\Phi$ 
from the $2$-representation $\mathbb{P}_{\mathtt{i}}$ of $\cC$ to $\overline{\mathbf{M}}$. 
In the notation of Subsection~\ref{s2.3}, the image of $\mathbf{N}(\mathtt{j})$ for $\mathtt{j}\in\cC$ under $\Phi$
is inside the category of projective objects in $\overline{\mathbf{M}}(\mathtt{j})$ and contains at least one
representative in each isomorphism class of indecomposable objects, see \cite[Subsection~4.5]{MM1}. 
We also have that $\mathbf{I}$  (see Subsection~\ref{s2.3}) annihilates $L$ by construction. It follows that the 
$2$-representation $\mathbf{K}$ of $\cC$ on projective objects in the categories $\overline{\mathbf{M}}(\mathtt{j})$ 
(for $\mathtt{j}\in\cC$) is equivalent to the cell $2$-representation $\mathbf{C}_{\mathcal{L}}$ of $\cC$. 
As $\mathbf{K}$ is equivalent to $\mathbf{M}$ by \cite[Theorem~11]{MM2}, we deduce that
$\mathbf{M}$ is equivalent to $\mathbf{C}_{\mathcal{L}}$. This completes the proof.
\end{proof}

\section{Examples}\label{s6}

\subsection{A non weakly fiat $2$-category $\cC_A$}\label{s6.1}

In this subsection we give an example of a non weakly fiat $2$-category $\cC_A$ for which Theorem~\ref{thmmain}
generalizes to the class of exact simple transitive $2$-representations. Taking into account the example
considered in Subsection~\ref{s2.4}, the present example is somewhat surprising.

For $A=\Bbbk[x,y]/(x^2,y^2,xy)$ consider the $2$-category $\cC_A$ as defined in Subsection~\ref{s4.1}.
Note that $A$ is local but not self-injective which implies that $\cC_A$ is not weakly fiat. Let $\mathrm{F}$ be an 
indecomposable $1$-morphism in $\cC_A$ which is not isomorphic to the identity $1$-morphism.
The defining $2$-representation of $\cC_A$ is easily seen to be equivalent to the cell
$2$-representation $\mathbf{C}_{\mathcal{L}}$ for $\mathcal{L}=\{\mathrm{F}\}$.

\begin{proposition}\label{prop18}
For  $A=\Bbbk[x,y]/(x^2,y^2,xy)$, any exact simple transitive $2$-representation of $\cC_A$ is equivalent
to a cell $2$-representation.
\end{proposition}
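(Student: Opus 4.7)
The plan is to mirror the proof of Theorem \ref{thmmain}, identifying which steps use self-injectivity and replacing those by arguments specific to $A = \Bbbk[x,y]/(x^2,y^2,xy)$. The first three stages carry over verbatim: since $\mathrm{F} \circ \mathrm{F} \cong \mathrm{F}^{\oplus 3}$ (because $\dim A = 3$), Lemma \ref{lem5} gives that $[\mathrm{F}]$ has rank one with strictly positive entries; Lemma \ref{lem7}'s argument then shows that $\mathrm{F}\,X$ is projective for every $X$ in $\overline{\mathbf{M}}(\clubsuit)$; and exactness of $\mathbf{M}$ together with Lemma \ref{lem8} gives that $\Theta := \overline{\mathbf{M}}(\mathrm{F})$ is a projective functor $M \otimes_B -$ for some projective $B$-$B$-bimodule $M = \bigoplus_{i,j}(Be_i \otimes e_j B)^{n_{ij}}$, where $B$ is the basic algebra with $\overline{\mathbf{M}}(\clubsuit) \simeq B\text{-}\mathrm{mod}$. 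The identity $M \otimes_B M \cong M^{\oplus 3}$ combined with the rank-one condition forces $N = (n_{ij}) = \mathbf{n} \mathbf{m}^{t}$ and the single numerical identity $\mathbf{m}^{t} C \mathbf{n} = 3$, where $C$ is the Cartan matrix of $B$.

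I would next verify that Lemma \ref{lem9} still applies; its proof rests only on Proposition \ref{prop11} (cell $2$-representation equivalent to the defining one, which extends to our $A$ by direct check) and on identifying the maximal left $2$-ideal of $\cC_A$ not containing $\mathrm{id}_\mathrm{F}$ with $A \otimes \mathrm{rad}(A^{\mathrm{op}})$. A straightforward computation of the evaluation $A \otimes A^{\mathrm{op}} \to \mathrm{End}_A(A)$, $a \otimes b \mapsto a\bar{b}$, confirms this for our commutative local $A$, so Lemma \ref{lem9} yields an algebra embedding $A \hookrightarrow \mathrm{End}(\mathrm{F}\,L)$ for every simple $L$ not annihilated by $\mathrm{F}$. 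Combining this with the natural algebra map $A = \mathrm{End}_{\cC_A}(\mathbbm{1}) \to Z(B)$ induced by $\mathbf{M}$ and invoking $\mathcal{J}$-simplicity of $\cC_A$ (which ensures that a nonzero kernel would generate a nonzero $2$-ideal containing $\mathrm{id}_\mathrm{F}$, forcing $\Theta = 0$ and contradicting transitivity), one obtains the crucial inequality $\dim Z(B) \geq 3$.

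The decisive step is then a finite case analysis on $B$, replacing the clean socle argument of the self-injective case. For $n_B \geq 2$, the inequality $\mathbf{m}^{t} C \mathbf{n} \geq \sum_i C_{ii} m_i n_i + \sum_{i \neq j} C_{ij} m_i n_j$ combined with $\dim Z(B) \geq 3$ leaves only a short list of candidate Cartan matrices, corresponding to algebras such as $\Bbbk \times \Bbbk$, $\Bbbk[x]/(x^2) \times \Bbbk$, two-vertex path algebras, or $\Bbbk^{3}$; for each, a direct computation shows that the relations $x^2 = y^2 = xy = 0$ force any algebra map $A \to Z(B)$ to send $x$ and $y$ into a subspace of dimension at most two, too small to accommodate the $3$-dimensional $A$ injectively. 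For $n_B = 1$, then $B$ is local, $C = (\dim B)$, and $mn\dim B = 3$ yields $\dim B \in \{1, 3\}$; the case $\dim B = 1$ violates $\dim Z(B) \geq 3$, while for $\dim B = 3$ the only local commutative $\Bbbk$-algebras up to isomorphism are $A$ and $\Bbbk[x]/(x^3)$, and the same socle-image argument shows any algebra map $A \to \Bbbk[x]/(x^3)$ lands in $\Bbbk \oplus \Bbbk \cdot \bar{x}^{2}$ and is not injective. Hence $B \cong A$.

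Once $B \cong A$ and (necessarily) $M \cong A \otimes A$ are established, $\Theta$ matches the defining action of $\mathrm{F}$, and by Proposition \ref{prop11} the resulting $2$-representation is the cell $2$-representation for the unique non-identity left cell $\mathcal{L} = \{\mathrm{F}\}$. The main obstacle will be carrying out the case analysis for non-local $B$ and, prior to that, verifying $\mathcal{J}$-simplicity of $\cC_A$ for this specific non-weakly-fiat $A$; both are where the loss of self-injectivity most directly shows up, since the socle-dimension shortcut of Theorem \ref{thmmain} is unavailable and one must explicitly enumerate and eliminate candidate algebras.
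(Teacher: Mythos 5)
Your route genuinely differs from the paper's: instead of enumerating the possible matrices $\llbracket\mathrm{F}\rrbracket$ and eliminating them by comparing $A\otimes_{\Bbbk}A^{\mathrm{op}}$ with the endomorphism algebra of $\Theta=\overline{\mathbf{M}}(\mathrm{F})$, you use the embedding $A\hookrightarrow Z(B)$ coming from $\mathrm{End}_{\cC_A}(\mathbbm{1})\cong A$ together with the Cartan identity $\sum_{j,k}m_j\dim(e_jBe_k)n_k=3$. The $Z(B)$ observation is correct (granting $\mathcal{J}$-simplicity and after splitting off the trivial case $\mathbf{M}(\mathrm{F})=0$, which you do not treat), and it does dispose of the algebras on your list. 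The gap is that your list is not complete: it is compiled under the tacit assumption that $\mathbf{m}$ is strictly positive, i.e.\ that every column of $N$ is nonzero, i.e.\ that $\Theta$ annihilates no simple object of $\overline{\mathbf{M}}(\clubsuit)$. In Theorem~\ref{thmmain} this comes for free from self-injectivity: $\mathrm{F}\cong\mathrm{F}^*$ and Lemma~\ref{lem6} give $\llbracket\mathrm{F}\rrbracket=[\mathrm{F}]^t$, so positivity of $[\mathrm{F}]$ transfers to $\llbracket\mathrm{F}\rrbracket$. For the present non-weakly-fiat $A$ no adjunction is available, and nothing in your argument excludes zero entries of $\mathbf{m}$; once they are allowed, the numerical identity no longer bounds $B$ the way you claim.

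Concretely, let $B$ be the quotient of the path algebra of the quiver with vertices $1,2,3$, arrows $a\colon 2\to 1$, $b\colon 3\to 1$ and loops $u$ at $2$, $v$ at $3$, modulo $u^2=v^2=au=bv=0$ (so $\dim B=7$), and take $M=B\otimes_{\Bbbk}e_1B$, i.e.\ $\mathbf{n}=(1,1,1)^t$, $\mathbf{m}=(1,0,0)^t$. Then $M\otimes_BM\cong M^{\oplus 3}$, all entries of $[\mathrm{F}]=NC$ are positive, $u,v$ span a square-zero central subspace so that $A\hookrightarrow Z(B)$, and $A\hookrightarrow \mathrm{End}_B(\Theta L_1)\cong B^{\mathrm{op}}$, so every criterion you invoke (rank one, the numerical identity, $\dim Z(B)\geq 3$, Lemma~\ref{lem9}) is satisfied while $B\not\cong A$. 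What excludes such candidates is exactly the ingredient you dropped from the paper's proof: by $\mathcal{J}$-simplicity the $9$-dimensional local algebra $A\otimes_{\Bbbk}A^{\mathrm{op}}\cong\mathrm{End}_{\ccC(\clubsuit,\clubsuit)}(\mathrm{F})$ must embed into $\mathrm{End}(\Theta)\cong\mathrm{End}_{B\text{-}B}(M)$, which here has dimension $\dim B\cdot\dim e_1Be_1=7<9$. So you must either prove directly that $\Theta$ kills no simple (which restores your short list and makes the $Z(B)$ argument suffice), or reinstate the paper's comparison of $A\otimes A^{\mathrm{op}}$ with $\mathrm{End}(\Theta)$ in the case analysis; as written the classification step is incomplete. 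Two minor points: deducing that $N$ has rank one requires combining $3N=NCN$ with $\mathrm{rank}\,[\mathrm{F}]=1$, not rank-one-ness of $[\mathrm{F}]$ alone; and the final identification of $\mathbf{M}$ with the cell $2$-representation still needs the isomorphism $\mathrm{End}_{\cC_A}(\mathrm{F})\cong\mathrm{End}(\Theta)$, which again rests on the $\mathcal{J}$-simplicity embedding plus a dimension count.
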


\begin{proof}
Let $\mathbf{M}$ be an exact simple transitive $2$-representation of $\cC_A$.  
Without loss of generality we may assume $\mathbf{M}(\mathrm{F})\neq 0$.
Then  $\mathrm{F}\circ \mathrm{F}\cong \mathrm{F}^{\oplus 3}$ and hence 
$\llbracket\mathrm{F}\rrbracket^2=3\llbracket\mathrm{F}\rrbracket$ by exactness of $\mathbf{M}(\mathrm{F})$.
Using Theorem~\ref{thm1} it is easy to check that $\llbracket\mathrm{F}\rrbracket$ is equal to
one of the following matrices:
\begin{displaymath}
M_1:=\left(\begin{array}{c}3\end{array}\right),\quad 
M_2:=\left(\begin{array}{cc}1&1\\2&2\end{array}\right),\quad 
M_3:=\left(\begin{array}{cc}1&2\\1&2\end{array}\right),\quad 
M_4:=\left(\begin{array}{ccc}1&1&1\\1&1&1\\1&1&1\end{array}\right). 
\end{displaymath}
Let $B$ and $B'$ be as in the proof of Theorem~\ref{thmmain}. 
Note that both  Lemma~\ref{lem7} and Lemma~\ref{lem9} are still 
applicable in our situation. Despite of the fact that $\cC_A$ is not weakly fiat, it is still $\mathcal{J}$-simple, 
where $\mathcal{J}=\{\mathrm{F}\}$.

If $\llbracket\mathrm{F}\rrbracket=M_4$, then $B\cong \Bbbk^{\oplus 3}$ and
$\mathbf{M}(\mathrm{F})$ is the direct sum of nine copies of the identity functors  (between the three different copies
of $\Bbbk\text{-}\mathrm{mod}$). The endomorphism algebra of $\mathbf{M}(\mathrm{F})$ has thus dimension nine and
is clearly not isomorphic to $A\otimes_{\Bbbk}A^{\mathrm{op}}$. Hence this case is not possible.

If $\llbracket\mathrm{F}\rrbracket=M_3$, then $B=B'\cong \Bbbk^{\oplus 2}$ and the algebra $A$ does not inject
into $B'$. This contradicts Lemma~\ref{lem9} and hence this case is not possible either.

If $\llbracket\mathrm{F}\rrbracket=M_2$, then either $B=B'$ is a $3$-dimensional algebra which is
not local or $B\cong \Bbbk^{\oplus 2}$ and $B'\cong \Bbbk\oplus\mathrm{Mat}_{2\times 2}(\Bbbk)$. 
In the first case we again get a contradiction to Lemma~\ref{lem9}.
In the second case the endomorphism algebra of $\mathbf{M}(\mathrm{F})$ has dimension ten and two
direct summands isomorphic to $\Bbbk$, say this endomorphism algebra is $Q\oplus \Bbbk\oplus \Bbbk$. 
If the local algebra $A\otimes_{\Bbbk}A^{\mathrm{op}}$ were to inject into the endomorphism algebra 
of $\mathbf{M}(\mathrm{F})$, the algebra  $A\otimes_{\Bbbk}A^{\mathrm{op}}$ would also inject into 
$Q$ which has strictly smaller dimension, a contradiction. Hence this case is not possible.

If $\llbracket\mathrm{F}\rrbracket=M_1$, then either $B\cong \Bbbk$ and $B'=\mathrm{Mat}_{3\times 3}(\Bbbk)$ 
or $B=B'$ has dimension $3$. In the former case the endomorphism algebra of $\mathbf{M}(\mathrm{F})$ has dimension 
nine and is not local, implying a contradiction similarly to the case $\llbracket\mathrm{F}\rrbracket=M_4$. 
In the latter case we again use Lemma~\ref{lem9} to get $B=B'\cong A$ and then
we readily deduce that $\mathbf{M}$ is equivalent to the cell $2$-representation.
\end{proof}

\subsection{Categorification of finite dimensional $2$-Lie algebras}\label{s6.2}

Let $\mathfrak{g}$ denote a simple finite dimensional complex Lie algebra. We fix a triangular decomposition
$\mathfrak{n}_-\oplus\mathfrak{h}\oplus\mathfrak{n}_+$ of $\mathfrak{g}$. For any $\mathfrak{h}$-weight $\lambda$ 
denote by $L(\lambda)$ the corresponding simple highest weight module with highest weight $\lambda$.
Let $\leq$ denote the natural partial order on $\mathfrak{h}$-weights. 

Let $\cU$ be the $2$-category categorifying the idempotent version $\dot{U}$ of the universal enveloping 
algebra of $\mathfrak{g}$ as defined in \cite[Definition~2.4]{We} (the origins of this $2$-category are in
\cite{CL}, see also \cite{KL,Ro} for other variations). The categorification statement is justified 
by \cite[Theorem~B.2]{We}. For each dominant integral $\mathfrak{h}$-weight $\lambda$ there is a  
$2$-representation of $\cU$ given by a functorial action on the direct sum (over $n$) 
of categories of projective modules over the cyclotomic quiver Hecke algebras (KLR algebras) $R_n^{\lambda}$ 
associated with $\mathfrak{g}$ (see \cite[Theorem~3.17]{We} for $\cU$ and also \cite{KK,Ka} for a similar statement
related to Rouquier's $2$-Kac-Moody algebras). This $2$-representation categorifies $L(\lambda)$.
We note the following properties of this $2$-representation:
\begin{itemize}
\item As $L(\lambda)$ is finite dimensional, only finitely many of the algebras $R_n^{\lambda}$ are non-zero.
\item As $L(\lambda)$ is finite dimensional, sufficiently high powers of the generators annihilate our 
$2$-representation.  Hence, the commutation relations in $\mathfrak{g}$ imply that 
only finitely many indecomposable $1$-morphisms from $\cU$ act as non-zero functors in this $2$-representation.
\item Each $R_n^{\lambda}$ is finite dimensional and all involved functors are exact.
\item Each $1$-morphism in $\cU$ acts as an exact functor and hence can be realized as tensoring
with a finite-dimensional bimodule. This implies that the spaces of two morphisms in this $2$-representation are finite dimensional.
\item Each $1$-morphism in $\cU$ has a biadjoint which is again a functor representing the action of
some $1$-morphism in $\cU$.
\item The endomorphism algebra of each identity $1$-morphism in $\cU$ is positively graded 
by the non-degeneracy part of \cite[Theorem~B.2]{We} and isomorphic to a polynomial ring (\cite[Proposition 3.31]{We}). In particular, each finite dimensional graded quotient of this algebra is local.
\end{itemize}
Let $\mathcal{I}_{\lambda}$ be the kernel of this $2$-representation and set 
$\cU_{\lambda}:=\cU/\mathcal{I}_{\lambda}$. Then the above implies that $\cU_{\lambda}$ is a fiat $2$-category.
Note that $\mathcal{I}_{\lambda}$ is, in general, not generated by $2$-morphisms of the form
$\mathrm{id}_{\mathrm{F}}$, where $\mathrm{F}$ is some $1$-morphism, but it additionally
contains some of the $2$-morphisms between $1$-morphisms which are not in $\mathcal{I}_{\lambda}$, see 
\cite[Remark~31]{MM2}.

Consider a finite set $\boldsymbol{\lambda}:=\{\lambda_1,\lambda_2,\dots,\lambda_k\}$ of 
dominant integral $\mathfrak{h}$-weights such that $\lambda_i\not\leq \lambda_j$ for all $i\neq j$
and denote by $\overline{\boldsymbol{\lambda}}$ the set of all dominant integral weights
$\mu$ such that $\mu\leq \lambda_i$ for some $i$. Note that $\overline{\boldsymbol{\lambda}}$ is a finite set. 
Define 
\begin{displaymath}
\cU_{\boldsymbol{\lambda}}:=\cU/(\mathcal{I}_{\lambda_1}\cap \mathcal{I}_{\lambda_2}\cap\dots\cap\mathcal{I}_{\lambda_k}),
\end{displaymath}
which is again a fiat $2$-category. 

\begin{remark}\label{remnew12}
{\rm
Let $\mathcal{L}$ be the left cell in $\cU_{\boldsymbol{\lambda}}$ containing the indecomposable 
$1$-morphism $\mathbbm{1}_{\lambda_l}$ for $l\in\{1,2,\dots,k\}$. As $\mathbbm{1}_{\lambda_l}$ is a genuine idempotent
and is, obviously, the unique element in the intersection of its left and right cells, the 
radical of its endomorphism ring is contained in the ideal $\mathbf{I}$ from Subsection~\ref{s2.3}
used to define the corresponding cell $2$-representation $\mathbf{C}_{\mathcal{L}}$.
Consequently, the image of $\mathbbm{1}_{\lambda_l}$ in the abelianized cell $2$-representation
is both simple and projective (this corresponds to a projective module over $R_0^{\lambda}\cong\mathbb{C}$). 
Moreover, the functor $\mathbf{C}_{\mathcal{L}}(\mathbbm{1}_{\lambda_l})$ 
is just the identity functor on the category of complex vector
spaces, in particular, its endomorphism ring consists only of scalars. 
Note that our construction of $\mathbf{C}_{\mathcal{L}}$ differs, in particular, from the construction 
of the universal categorification of $L(\lambda)$ in \cite[Subsection~5.1.2]{Ro}. In the latter case 
the endomorphism of $\mathbbm{1}_{\lambda_l}$ is much bigger in general.
}
\end{remark}

\begin{theorem}\label{thm31}
For any $\boldsymbol{\lambda}$ as above every two-sided cell in the $2$-category $\cU_{\boldsymbol{\lambda}}$ 
is strongly regular and satisfies the numerical condition.
\end{theorem}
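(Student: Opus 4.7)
The plan is to translate the cell analysis into combinatorics of Lusztig's canonical basis $\mathbb{B}$ of $\dot{U}$ restricted to $\bigoplus_{i=1}^k L(\lambda_i)$, leveraging Webster's categorification theorem.

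By \cite[Theorem~B.2]{We}, isomorphism classes of indecomposable $1$-morphisms of $\cU$ biject with $\mathbb{B}$, and composition decategorifies to multiplication in $\dot{U}$ with non-negative integer structure constants in the basis $\mathbb{B}$. Passing to the quotient $\cU_{\boldsymbol{\lambda}} = \cU/\bigcap_i\mathcal{I}_{\lambda_i}$, the non-zero indecomposable $1$-morphisms are parameterized by the subset $\mathbb{B}_{\boldsymbol{\lambda}} = \{b \in \mathbb{B} : b|_{L(\lambda_i)} \neq 0 \text{ for some } i\}$, and the preorders $\geq_L$, $\geq_R$, $\geq_J$ on $\mathbb{B}_{\boldsymbol{\lambda}}$ are inherited from those on $\mathbb{B}$.

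Next I would invoke Lusztig's classification of two-sided cells of $\mathbb{B}$ via the asymptotic ($J$-ring) construction: two-sided cells are in bijection with isomorphism classes of finite-dimensional simple $\dot{U}$-modules, and each cell $\mathfrak{C}_V$ is identified with a ``matrix unit'' basis of $\mathrm{End}_{\mathbb{C}}(V)$ labeled by a crystal basis of $V$ (which accounts for weight multiplicities). Restricted to $\mathbb{B}_{\boldsymbol{\lambda}}$, the non-trivial two-sided cells are $\mathcal{J}_1,\ldots,\mathcal{J}_k$ corresponding to $L(\lambda_1),\ldots,L(\lambda_k)$; each canonical basis element in $\mathcal{J}_i$ carries a label $(v,w)$ with $v,w$ crystal basis vectors of $L(\lambda_i)$. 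The maximality assumption $\lambda_i\not\leq\lambda_j$ for $i\neq j$ guarantees that the $\mathcal{J}_i$ are maximal among surviving cells. Right cells in $\mathcal{J}_i$ correspond to fixing the target $w$ and left cells to fixing the source $v$. Strong regularity is then immediate: left (right) multiplication preserves the source (target) index, so distinct left (right) cells are incomparable, and the intersection of a left cell and a right cell consists of the single matrix unit $(v,w)$.

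For the numerical condition, I would compute $\mathrm{F}^*\circ\mathrm{F}$ for $\mathrm{F}=\mathrm{F}_b$ with $b$ labeled $(v,w)$. In the $J$-ring $b^*b$ equals the matrix unit $(v,v)$ multiplied by a coefficient determined by the ``middle'' index $w$ -- this is the analog of the structure constant $\dim(e_iAe_i)$ appearing in the $\cC_A$ example of Section~\ref{s4}. Lifting back to $\dot{U}$, the only summand of $b^*b$ lying in $\mathcal{J}_i$ is labeled $(v,v)$ with this multiplicity, while all other summands lie strictly below $\mathcal{J}_i$. Since this multiplicity depends only on $w$, i.e.\ only on the right cell of $\mathrm{F}$, the numerical condition holds. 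The main obstacle is making the matrix-unit identification precise enough to justify this summand count: Lusztig's $J$-ring structure recovers the cell bijection only modulo strictly lower two-sided cells, so one must argue that the correction terms to $b^*b$ arising from lifting the $J$-ring product back to $\dot{U}$ do not contribute to $\mathcal{J}_i$. This should follow from positivity of the canonical basis structure constants combined with the maximality of each $\mathcal{J}_i$ among the surviving cells, together with a careful bookkeeping using the weight grading.
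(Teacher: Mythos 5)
Your route stays entirely at the decategorified level, and the decisive steps are exactly the ones you defer. First, the package you invoke --- two-sided cells of $\dot{U}$ in bijection with dominant weights, each cell identified with ``matrix units'' indexed by $B(\lambda)\times B(\lambda)$, left/right cells given by fixing one index, and a single basis element in each intersection of a left and a right cell --- is not available off the shelf in the generality needed here (arbitrary simple $\mathfrak{g}$, and for the finitary quotient $\cU_{\boldsymbol{\lambda}}$, whose indecomposable $1$-morphisms must in addition be matched with canonical basis elements: \cite[Theorem~B.2]{We} gives $K_0(\cU)\cong\dot{U}$, but not by itself that indecomposables decategorify to the canonical basis). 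Even granting the bijection of the cell with $B(\lambda)\times B(\lambda)$, ``strong regularity is then immediate'' is too quick: incomparability of distinct left cells inside $\mathcal{J}$, and the fact that the within-cell summands of $\mathrm{H}\circ\mathrm{F}$ retain the left-cell index of $\mathrm{F}$, are the $\dot{U}$-analogues of Lusztig's cell properties for Hecke algebras and need proof or a precise citation.

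Second, and more seriously, the numerical condition concerns the actual multiplicity (at $q=1$) of the within-cell summands of $\mathrm{F}^*\circ\mathrm{F}$, and the asymptotic ($J$-ring) picture does not compute it: the asymptotic ring records only leading coefficients, and the cell subquotient of $\dot{U}$ is not a matrix-unit ring --- already for $\mathfrak{sl}_2$ the within-cell multiplicity is $\dim(e_iAe_i)$ for the cyclotomic KLR (nilHecke) algebra $A$, which is greater than one in general. Your proposed remedy (positivity of structure constants plus maximality of $\mathcal{J}_i$) only controls contamination from lower cells, which is not the issue; the issue is identifying the within-cell structure constants themselves and showing they depend only on the right cell. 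This is exactly what the paper obtains by arguing categorically rather than decategorifying: using \cite[Theorem~3.17]{We}, \cite[Theorem~5.7]{Ro} and \cite[Theorem~4.4]{VV} it identifies the left cell of $\mathbbm{1}_{\lambda_l}$ with the indecomposable projectives over $A=\bigoplus_n R_n^{\lambda}$, then uses Lemma~\ref{lem8} together with maximality of $\mathcal{J}$ to realize the whole cell as the indecomposable projective $A$-$A$-bimodules $\mathrm{F}\circ\mathrm{G}^*$ with $\mathrm{F},\mathrm{G}\in\mathcal{L}$, at which point both strong regularity and the numerical condition are read off as in \cite[Subsection~7.3]{MM1}. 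Without this step, or an equivalent computation of the within-cell structure constants, your sketch does not yet prove the theorem.
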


\begin{proof}
For $l\in\{1,2,\dots,k\}$ consider the two-sided cell $\mathcal{J}$ of $\cU_{\boldsymbol{\lambda}}$ containing 
$\mathbbm{1}_{\lambda_l}$. Then, factoring out the maximal $2$-ideal 
in $\cU_{\boldsymbol{\lambda}}$ which contains $\mathrm{id}_{\mathbbm{1}_{\lambda_l}}$ and does not contain 
the identity $2$-morphism for any $1$-morphism outside $\mathcal{J}$ (note that such an ideal does not have to
be generated by $2$-morphisms of the form $\mathrm{id}_{\mathrm{F}}$, where $\mathrm{F}$ is some $1$-morphism), 
we obtain the $2$-category 
$\cU_{\boldsymbol{\mu}}$ where $\boldsymbol{\mu}$ is uniquely defined via 
$\overline{\boldsymbol{\mu}}:=\overline{\boldsymbol{\lambda}}\setminus\{\lambda_l\}$,
cf. \cite[Section~9]{DG}. Therefore
it is enough to prove that $\mathcal{J}$ is strongly regular and satisfies the numerical condition.

Let $\mathcal{L}$ denote the left cell of $\mathbbm{1}_{\lambda_l}$. 
Let further $L$ be an indecomposable object in $R_0^{\lambda}\text{-}\mathrm{proj}$.
Note that $R_0^{\lambda}\cong\mathbb{C}$.
As $L$ corresponds to the highest weight vector in $L(\lambda)$, all $1$-morphisms which do not annihilate
$L$ must correspond to the $U(\mathfrak{n}_-)$ part of $\dot{U}$. This means that $\mathcal{L}$ consists of
direct summands of powers of the negative generators of $\cU$. 
Then, from \cite[Theorem~3.17]{We} in combination with \cite[Theorem~5.7]{Ro} and 
\cite[Theorem~4.4]{VV}, it follows 
that mapping an indecomposable $1$-morphism $\mathrm{F}\in\mathcal{L}$ to 
$\mathrm{F}\, L$ induces a bijection between $\mathcal{L}$
and the set of isomorphism classes of indecomposable objects in 
\begin{displaymath}
\bigoplus_{n\geq 0}R_n^{\lambda}\text{-}\mathrm{proj}. 
\end{displaymath}
Set
\begin{displaymath}
A:=\bigoplus_{n\geq 0}R_n^{\lambda}\quad\text{ and }\quad B:=\bigoplus_{n\geq 1}R_n^{\lambda}. 
\end{displaymath}
For any $M\in B\text{-}\mathrm{proj}$ we have $\mathbbm{1}_{\lambda_l}\, M=0$ and therefore
$\mathrm{F}\, M=0$ for any $\mathrm{F}\in\mathcal{L}$. Consider 
the abelian $2$-representation $\overline{\mathbf{C}}_{\lambda_l}$.

Since $\cU_{\boldsymbol{\lambda}}$ is fiat, Lemma~\ref{lem8} implies  that 
$\overline{\mathbf{C}}_{\lambda_l}(\mathrm{F})$ is an indecomposable projective functor from
$\mathbb{C}\text{-}\mathrm{mod}$ to $A\text{-}\mathrm{mod}$. Consequently, for any 
$\mathrm{G}\in\mathcal{L}$ the functor $\overline{\mathbf{C}}_{\lambda_l}(\mathrm{F}\circ \mathrm{G}^*)$
is indecomposable. We claim that this implies that $\mathrm{F}\circ \mathrm{G}^*$ is indecomposable.
Indeed, if  $\mathrm{F}\circ \mathrm{G}^*\cong\mathrm{X}\oplus\mathrm{Y}$,  then without loss of generality
we may assume $\overline{\mathbf{C}}_{\lambda_l}(Y)=0$. Since $\mathcal{J}$ is a maximal two-sided cell,
we have $Y\in \mathcal{J}$ and hence $\overline{\mathbf{C}}_{\lambda_l}(Y)\neq 0$, a contradiction.

The previous paragraph shows that the set $\{\mathrm{F}\circ \mathrm{G}^*\}$, where 
$\mathrm{F},\mathrm{G}\in\mathcal{L}$, consists of indecomposable $1$-morphisms and hence coincides with $\mathcal{J}$.
In particular $|\mathcal{J}|=|\mathcal{L}|^2$.  It is now obvious that the left cells in $\mathcal{J}$ are 
obtained fixing $\mathrm{G}$ and the right cells in $\mathcal{J}$ are  obtained fixing $\mathrm{F}$. 
Therefore $\mathcal{J}$ is strongly regular. To check the numerical condition we note that 
$\overline{\mathbf{C}}_{\lambda_l}$ realizes elements of $\mathcal{J}$ as tensoring with indecomposable projective
$A\text{-}A$-bimodules, so the numerical condition follows from \cite[Subsection~7.3]{MM1}.
\end{proof}

\subsection{Soergel bimodules in type $B_2$}\label{s6.3}

Consider the $2$-category $\cS$ of Soergel bimodules for a Lie algebra of type $B_2$, see \cite[Section~7.1]{MM1} 
and \cite[Example~20]{MM2}. We denote by $\clubsuit$ the (unique) object in $\cS$. 
The Weyl group in this case is given by
\begin{displaymath}
W=\{e,s,t,st,ts,sts,tst,stst=tsts\},
\end{displaymath}
where $s^2=t^2=e$, and is isomorphic to the dihedral group $D_4$. The group $D_4$ has five simple
modules over $\mathbb{C}$: the one-dimensional simple modules $V_{\varepsilon,\delta}$, for
$\varepsilon,\delta\in\{\pm 1\}$, where $s$ acts via $\varepsilon$ and $t$ acts via $\delta$;
and the $2$-dimensional simple module $V_2$ (the defining geometric representation). 
For an additive category $\mathcal{A}$ we denote by $K_0(\mathcal{A})$ the split Grothendieck group of $\mathcal{A}$.
Our aim in this section is to apply previous results in order to prove the following statement 
which describes simple $W$-modules admitting a finitary categorification.

\begin{proposition}\label{propb2}
Let $\mathbf{M}$ be a finitary $2$-representation of $\cS$. Assume that the induced action of
the algebra $\mathbb{C}\otimes_{\mathbb{Z}}K_0(\cS(\clubsuit,\clubsuit))$ on the vector space
$\mathbb{C}\otimes_{\mathbb{Z}}K_0(\mathbf{M}(\clubsuit))$ gives a simple $W$-module $V$. Then
$V\cong V_{1,1}$ or $V\cong V_{-1,-1}$.
\end{proposition}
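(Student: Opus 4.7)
The plan is to use the weak Jordan--H\"older theorem (Theorem~\ref{thm4}) to reduce to the case of a simple transitive $2$-representation and then to decide, for each irreducible $D_4$-module, whether it can arise as its Grothendieck group. Concretely, the action-preorder filtration of $\mathbf{M}$ has simple transitive subquotients $\mathbf{L}_1,\dots,\mathbf{L}_k$, and passing to Grothendieck groups yields a direct sum decomposition $V=\bigoplus_i\mathbb{C}\otimes_{\mathbb{Z}} K_0(\mathbf{L}_i(\clubsuit))$ of $W$-modules. Since every simple transitive $2$-representation has non-zero Grothendieck group, simplicity of $V$ forces $k=1$, so the problem reduces to deciding which of the five simple $D_4$-modules $V_{\varepsilon,\delta}$ ($\varepsilon,\delta\in\{\pm 1\}$) and $V_{2}$ can occur as $V\cong\mathbb{C}\otimes_{\mathbb{Z}} K_0(\mathbf{L}(\clubsuit))$ for a simple transitive $\mathbf{L}$.

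Next, I would observe that $V_{1,1}$ and $V_{-1,-1}$ are realised by the cell $2$-representations $\mathbf{C}_{\{w_0\}}$ and $\mathbf{C}_{\{e\}}$ respectively; a short M\"obius inversion between the KL basis at $v=1$ and the standard basis $\{[w]\}$ identifies the $W$-module structure on each of these $1$-dimensional cell representations. To exclude $V_{1,-1}$, I would use that every class $[B_w]$ must act on the unique indecomposable generator of $K_0(\mathbf{L})$ as a non-negative integer, namely the multiplicity of that indecomposable in $\mathrm{F}_w X$. Since all type-$B_2$ Kazhdan--Lusztig polynomials equal $1$, at $v=1$ one has $b_{sts}=\sum_{u\leq sts}[u]$, whose value on $V_{1,-1}$ is $1+1-1-1-1-1=-2$, a contradiction. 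The case $V_{-1,1}$ is symmetric via $b_{tst}$.

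The main obstacle is to exclude $V_{2}$, because $K_0$-positivity alone no longer suffices: one can exhibit a $\mathbb{Z}$-basis of $V_{2}$ in which every KL basis element acts by a non-negative integer matrix. My approach is to observe that on $V_{2}$ the element $b_{w_0}$ acts as zero, so any candidate $\mathbf{L}$ factors through the quotient $\cS/\langle\mathrm{id}_{B_{w_0}}\rangle$ and its maximal non-annihilating two-sided cell is the middle cell $\mathcal{J}$. Mimicking the reduction in the proof of Theorem~\ref{thm15}, the restriction of $\mathbf{L}$ to the $2$-full subcategory $\cS_{\mathcal{J}}$ generated by $\mathcal{J}$ and the identity $1$-morphism is again simple transitive. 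Since $\mathcal{J}$ is not strongly regular in $B_2$ (the intersections of left and right cells in $\mathcal{J}$ have sizes $2,1,1,2$), Theorem~\ref{thm15} does not apply verbatim, and one instead adapts the Perron--Frobenius analysis of Theorem~\ref{thmmain} to $\mathrm{F}:=\bigoplus_{w\in\mathcal{J}}B_w$. A trace computation shows that the two cell $2$-representations $\mathbf{C}_{\mathcal{L}_s}$ and $\mathbf{C}_{\mathcal{L}_t}$ (for the two left cells $\mathcal{L}_s=\{s,ts,sts\}$ and $\mathcal{L}_t=\{t,st,tst\}$) have Grothendieck groups isomorphic to $V_{1,-1}\oplus V_{2}$ and $V_{-1,1}\oplus V_{2}$ respectively, so $V_{2}$ appears in the category only as a proper direct summand; refining the endomorphism-algebra analysis of Section~\ref{s4} to rule out a $2$-dimensional simple transitive $2$-representation with apex $\mathcal{J}$ then completes the proof. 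Carrying the self-injective/Perron--Frobenius machinery through the non-strongly-regular middle cell of $B_{2}$ is the substantive part of the plan.
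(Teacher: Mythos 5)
Your reduction to a single simple transitive subquotient via Theorem~\ref{thm4} is fine (on $K_0$ one gets a filtration rather than a direct sum, but simplicity of $V$ still forces exactly one stratum), and your exclusion of $V_{1,-1}$ and $V_{-1,1}$ is correct and even slightly different from the paper's: you use that on a rank-one $K_0$ the class of $\theta_{sts}$ must act by a non-negative integer while it acts by $-2$ on $V_{1,-1}$, whereas the paper argues that $\theta_s$ and $\theta_t$ lie in the same two-sided cell, so one cannot act by zero while the other does not. The genuine problem is the case $V\cong V_2$, which you yourself call the substantive part: there you do not give a proof but only a plan, namely to restrict to the middle cell $\mathcal{J}_2$ and ``adapt the Perron--Frobenius analysis of Theorem~\ref{thmmain}'' and ``refine the endomorphism-algebra analysis of Section~\ref{s4}''. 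This is precisely where the argument is missing.

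Moreover, that plan runs into a concrete obstruction rather than a routine refinement. The Section~\ref{s4} machinery rests on two features of $\cC_A$: strong regularity (which underlies the $\mathcal{J}$-simplicity statement and the identification of the $2$-endomorphism algebra of $\mathrm{F}$ with $A\otimes_{\Bbbk}A^{\mathrm{op}}$ used in Lemma~\ref{lem9}) and the identity $\mathrm{F}\circ\mathrm{F}\cong\mathrm{F}^{\oplus m}$ of \eqref{eq5}, which drives the rank-one Perron--Frobenius arguments of Lemmas~\ref{lem5} and~\ref{lem7}. For the middle cell of $B_2$ both fail: the cell is not strongly regular (as you note), and for $\Theta=\sum_{w\in\mathcal{J}_2}\theta_w$ one has $\Theta^2\equiv 10\Theta+4(\theta_{st}+\theta_{ts})$ modulo the top cell, which is not a multiple of $\Theta$; so there is no eigenvalue/rank-one argument to carry over, and ruling out an exotic two-object simple transitive $2$-representation with apex $\mathcal{J}_2$ by these methods is exactly what the paper cannot and does not do. The paper instead sidesteps the middle cell: it uses the relations $X^4-20X^2-16X=0$ for $X=\llbracket\theta_{st}+\theta_{ts}\rrbracket$ and $\llbracket\theta_s\rrbracket^2=2\llbracket\theta_s\rrbracket$, $\llbracket\theta_t\rrbracket^2=2\llbracket\theta_t\rrbracket$, together with non-negativity and $\theta_{st}=\theta_s\theta_t$, $\theta_{ts}=\theta_t\theta_s$, to force (up to swapping $s$ and $t$) $\llbracket\theta_s\rrbracket=\left(\begin{smallmatrix}1&1\\1&1\end{smallmatrix}\right)$ and $\llbracket\theta_t\rrbracket=\left(\begin{smallmatrix}2&0\\0&0\end{smallmatrix}\right)$, and then restricts to the $2$-subcategory $\cT$ generated by $\theta_e$ and $\theta_s$ alone, which \emph{is} fiat, strongly regular and satisfies the numerical condition; the restriction is transitive by positivity of the matrix and Lemma~\ref{lem6}, and its simple transitive quotient would be a simple transitive $2$-representation of $\cT$ with two indecomposables and $\theta_s$-matrix all ones, contradicting Theorem~\ref{thm15}. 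That restriction-to-rank-one step is the key idea absent from your proposal; note also that your observation that the middle-cell cell $2$-representations decategorify to $V_{1,-1}\oplus V_2$ and $V_{-1,1}\oplus V_2$ says nothing about possible non-cell simple transitive $2$-representations, which is what has to be excluded.
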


\begin{proof}
We have three two-sided cells
\begin{displaymath}
\mathcal{J}_1=\mathcal{L}_1=\{e\},\quad \mathcal{J}_2=\{s,t,st,ts,sts,tst\}, \quad \mathcal{J}_3=
\mathcal{L}_3=\{stst\}
\end{displaymath}
and $J_2$ splits into two left cells
\begin{displaymath}
\mathcal{L}_2^{(1)}=\{s,st,sts\}\quad\text{ and }\quad  \mathcal{L}_2^{(2)}=\{t,ts,tst\}.
\end{displaymath}
Right cells are obtained using the map $w\mapsto w^{-1}$. 

It is easy to check that the cell $2$-representations $\mathbf{C}_{\mathcal{L}_1}$ and
$\mathbf{C}_{\mathcal{L}_3}$ categorify $V_{1,1}$ and $V_{-1,-1}$, respectively.

We identify indecomposable Soergel bimodules $\theta_w$
for $w\in W$ with the corresponding elements
\begin{gather*}
\theta_e=e,\quad \theta_s=e+s,\quad \theta_t=e+t,\quad \theta_{st}=e+t+s+st,\quad \theta_{ts}=e+t+s+ts,\\
\quad \theta_{sts}=e+t+s+ts+st+sts,\quad \theta_{tst}=e+t+s+ts+st+tst,\\
\quad \theta_{stst}=e+t+s+ts+st+tst+sts+stst
\end{gather*}
in the Kazhdan-Lusztig basis for $\mathbb{Z}[W]$. 

Note that the element $\theta_s$ annihilates $V_{-1,1}$ while $\theta_t$ does not annihilate $V_{-1,1}$.
If we had a $2$-representation $\mathbf{M}$ decategorifying to $V_{-1,1}$, then 
$\mathbf{M}(\theta_s)=0$ while $\mathbf{M}(\theta_t)\neq 0$ which is impossible as $\theta_s$ and $\theta_t$
belong to the same two-sided cell. Therefore $V\not\cong V_{-1,1}$ and, by symmetry,
$V\not\cong V_{1,-1}$. (This argument came up in discussion with Catharina Stroppel.)

It is left to show that $V\not\cong V_2$. Note that $\theta_{stst}$ annihilates $V_2$.
Assume that $\mathbf{M}$ is a $2$-representation of $\cS$ decategorifying to $V_2$ and consider
$\overline{\mathbf{M}}$.
Set $\Theta:=\sum_{w\in J_2}\theta_w$. Direct computation shows that
\begin{displaymath}
(\theta_{st}+\theta_{ts})^2=2\Theta\mod J_3,\qquad \Theta^2=10\Theta+4(\theta_{st}+\theta_{ts})\mod J_3.
\end{displaymath}
This implies that the matrix $X:=\llbracket \theta_{st}+\theta_{ts}\rrbracket$ satisfies the polynomial
equation $X^4-20X^2-16X=0$. Consequently, $X$ is diagonalizable with eigenvalues in 
$\{0,-4,2(1\pm\sqrt{2})\}$. Clearly, $X$ is not the zero matrix. As all entries of $X$ are non-negative, 
the trace of $X$ is non-negative which implies that the eigenvalues of $X$ are
$2(1\pm\sqrt{2})$, each with multiplicity one. Thus the trace of $X$ is $4$ and the determinant is $-4$,
leaving
\begin{gather*}
\left(\begin{array}{cc}4&4\\1&0\end{array}\right),\quad
\left(\begin{array}{cc}4&2\\2&0\end{array}\right),\quad
\left(\begin{array}{cc}4&1\\4&0\end{array}\right),\quad
\left(\begin{array}{cc}3&7\\1&1\end{array}\right),\quad
\left(\begin{array}{cc}3&1\\7&1\end{array}\right),\\
\left(\begin{array}{cc}2&8\\1&2\end{array}\right),\quad
\left(\begin{array}{cc}2&4\\2&2\end{array}\right),\quad
\left(\begin{array}{cc}2&2\\4&2\end{array}\right),\quad
\left(\begin{array}{cc}2&1\\8&2\end{array}\right)
\end{gather*}
as possibilities (up to reordering of the basis).

We have $\theta_s^2\cong 2\theta_s$ and $\theta_t^2\cong 2\theta_t$, which implies that both 
$\llbracket \theta_{s}\rrbracket$ and $\llbracket \theta_{t}\rrbracket$ satisfy the polynomial
equation $x^2-2x=0$. Similarly to the above, this leads to the list of candidates for
$\llbracket \theta_{s}\rrbracket$ and $\llbracket \theta_{t}\rrbracket$ being given by
\begin{displaymath}
\left(\begin{array}{cc}1&1\\1&1\end{array}\right),\quad
\left(\begin{array}{cc}2&a\\0&0\end{array}\right),\quad
\left(\begin{array}{cc}2&0\\a&0\end{array}\right),\quad
\left(\begin{array}{cc}2&0\\0&2\end{array}\right)
\end{displaymath}
where $a\in\{0,1,2,\dots\}$. Note that $\theta_{st}=\theta_{s}\theta_{t}$ and $\theta_{ts}=\theta_{t}\theta_{s}$.
Hence, the equation 
\begin{displaymath}
\llbracket \theta_{st}+\theta_{ts}\rrbracket = \llbracket \theta_{s}\rrbracket\llbracket \theta_{t}\rrbracket +\llbracket \theta_{t}\rrbracket\llbracket \theta_{s}\rrbracket 
\end{displaymath}
reduces the choice to 
\begin{equation}\label{eqbb2}
\llbracket \theta_{s}\rrbracket=
\left(\begin{array}{cc}1&1\\1&1\end{array}\right),\quad
\llbracket \theta_{t}\rrbracket=
\left(\begin{array}{cc}2&0\\0&0\end{array}\right)
\end{equation}
or
\begin{equation}\label{eqbb2-1}
\llbracket \theta_{s}\rrbracket=
\left(\begin{array}{cc}2&0\\1&0\end{array}\right),\quad
\llbracket \theta_{t}\rrbracket=
\left(\begin{array}{cc}0&2\\0&2\end{array}\right)
\end{equation} 
or vice versa.

In case of \eqref{eqbb2}, we may restrict $\mathbf{M}$ to the $2$-subcategory ${\cT}$ of $\cS$ generated by 
$\theta_e$ and $\theta_s$
and adjunction morphisms between them. This $2$-category clearly satisfies all hypotheses of Theorem~\ref{thm15}.
Note that $\theta_s$ is self-adjoint, hence Lemma~\ref{lem6} implies 
that this restricted $2$-representation is transitive. Let $\mathbf{N}$ be its simple transitive quotient. 
Then $\mathbf{N}$ gives rise to a simple transitive $2$-representation of ${\cT}$ in which $\theta_s$ has the 
matrix described by \eqref{eqbb2}. This, however, contradicts Theorem~\ref{thm15}. 

In case of \eqref{eqbb2-1}, consider $\overline{\mathbf{M}}$
and let $L_1$ and $L_2$ denote the simple objects
in $\overline{\mathbf{M}}(\clubsuit)$.
Note that, by adjunction, a simple object $L$ can
appear in the top or in the socle of some $\theta_x N$,
for $x\in\{s,t\}$, only if $\theta_x\, L\neq 0$.
Therefore $\theta_s L_1$ cannot have $L_2$ in top or
socle and hence is uniserial of Loewy length three with
simple top and simple socle isomorphic to $L_1$. Similarly,
the module $\theta_t L_2$ has simple top and simple socle
isomorphic to $L_2$. Let $M$ denote the homology in the
middle term of
\begin{displaymath}
0\to L_2\to  \theta_t L_2\to L_2\to 0.
\end{displaymath}
Then  $M$ has length two with both simple subquotients
isomorphic to $L_1$.

Assume $M\cong L_1\oplus L_1$. Let $N$ be a non-split
extension of length two with top $L_1$ and socle $L_2$.
Then, since $\theta_s L_2=0$ and $\theta_s$ is exact,
by adjunction we have
\begin{displaymath}
\dim \mathrm{Hom}(\theta_s L_1, N)=
\dim \mathrm{Hom}(L_1, \theta_s L_1)=1
\end{displaymath}
and thus $N$ is a quotient of $\theta_s L_1$.
This implies  $\dim\mathrm{Ext}^1(L_1,L_2)=1$.
At the same time, consider $\mathrm{Rad}(\theta_t L_2)$.
By the above, this has simple socle $L_2$, the quotient
over which is $M$. Hence $\dim\mathrm{Ext}^1(L_1,L_2)\geq 2$,
a contradiction.

Assume that $M$ is indecomposable. Then, by adjunction,
\begin{displaymath}
1=\dim \mathrm{Hom}(M, \theta_s L_1)=
\dim \mathrm{Hom}(\theta_s M, L_1).
\end{displaymath}
As only $L_1$ can be in the top of $M$, the module
$\theta_s M$ has simple top and hence
is indecomposable. Consequently,
$\theta_s \theta_t \theta_s L_1\cong \theta_s M$ is
indecomposable. However,
$\theta_s \theta_t \theta_s =\theta_{sts}\oplus \theta_s$
and thus $\theta_s M$ has $\theta_s L_1$
as a direct summand. As dimensions of $\theta_s M$ and
$\theta_s L_1$ are different, this is a contradiction.
The proof is complete. 
\end{proof}


\noindent
Volodymyr Mazorchuk, Department of Mathematics, Uppsala University,
Box 480, 751 06, Uppsala, SWEDEN,\\ {\tt mazor\symbol{64}math.uu.se};
http://www.math.uu.se/$\sim$mazor/.

\noindent
Vanessa Miemietz, School of Mathematics, University of East Anglia,\\
Norwich NR4 7TJ, UK, \\ {\tt v.miemietz\symbol{64}uea.ac.uk};
http://www.uea.ac.uk/$\sim$byr09xgu/.
\end{document}